\documentclass[final]{siamltex}

%


%


\usepackage[dvips]{color}
\usepackage{amsfonts}
\usepackage{dsfont}
\usepackage{amsmath,amssymb}
\usepackage[T1]{fontenc}
\usepackage{ae}
\usepackage{epic,eepic}
\usepackage{longtable}
\usepackage{rotating}
\usepackage{epsfig,graphicx}
\usepackage{exscale}
\usepackage{relsize}
\usepackage{booktabs}

\vfuzz2pt 
\hfuzz2pt %

          \newcommand{\HRule}{\rule{\linewidth}{0.5mm}}


\newtheorem{thm}{Theorem}[section]
\newtheorem{cor}[thm]{Corollary}
\newtheorem{lem}[thm]{Lemma}
\newtheorem{prop}[thm]{Proposition}

\newtheorem{rem}[thm]{Remark}
\newtheorem{ex}[thm]{Example}

\newcommand{\C}{{\mathbb C}}

\newcommand{\R}{{\mathbb R}}












\renewcommand{\Im}{\operatorname{Im}}

\newcommand{\tr}{\operatorname{tr}}


\title{Riemannian optimization\\ on tensor products of Grassmann manifolds: Applications to generalized Rayleigh-quotients}

\author{O. Curtef\footnotemark[2]$\;\;$\footnotemark[3]
\and G. Dirr\footnotemark[2]\ 
\and U. Helmke\footnotemark[2]}

\begin{document}
\maketitle

\renewcommand{\thefootnote}{\fnsymbol{footnote}}
\footnotetext[2]{Department of Mathematics, University of W\"{u}rzburg, 97074
  W\"{u}rzburg, Germany}
  \footnotetext[3]{Corresponding author: curtef@mathematik.uni-wuerzburg.de}
\begin{abstract}
We introduce a generalized Rayleigh-quotient $\rho_A$ on the
tensor product of Grassmannians $\mathrm{Gr}^{\otimes r}({\bf m},{\bf n})$
enabling a unified approach to well-known optimization tasks from
different areas of numerical linear algebra,
such as best low-rank approximations of tensors (data compression),
geometric measures of entanglement (quantum computing) and subspace
clustering (image processing).
We briefly discuss the
geometry of the constraint set
$\mathrm{Gr}^{\otimes r}({\bf m},{\bf n})$,
we compute the Riemannian gradient of $\rho_A$,
we characterize its critical points and prove that they are generically non-degenerated. Moreover, we derive an explicit  necessary
condition for the non-degeneracy of the
Hessian. Finally, we present two intrinsic
methods for optimizing $\rho_A$ --- a Newton-like and a conjugated
gradient --- and
compare our algorithms tailored to the above-mentioned
applications with established ones from the literature.
\end{abstract}

\begin{keywords} Riemannian optimization, Grassmann manifold,
multilinear rank, best approximation of tensors, subspace clustering,
entanglement measure, Newton method, conjugated gradient method.
\end{keywords}

\begin{AMS}
14M15, 15A69, 65D19, 65F99, 65K10, 81P68
\end{AMS}

\pagestyle{myheadings} \thispagestyle{plain}
\markboth{O. CURTEF, G. DIRR AND U. HELMKE}{OPTIMIZATION ON TENSOR
  PRODUCTS OF GRASSMANNIANS}

\section{\bf Introduction}
The present paper addresses a constrained optimization problem,
subsuming and extending optimization tasks which arise in various
areas of applications such as
(i) low-rank tensor approximation problems from signal
processing and data compression, (ii) geometric measures of
pure state entanglement from quantum computing, (iii) subspace
reconstruction problems from image processing and (iv) combinatorial problems.

The problem can be stated as follows:
Given a collection of integer pairs $(m_j,n_j)$ with $1\leq m_j \leq n_j$
for $j = 1, \dots, r$ and a Hermitian $N\times N$ matrix $A$ with
$N := n_1n_2\cdots n_r$, find the global maximizer of the trace function
${\bf P} \mapsto \tr(A{\bf P})$. Here, ${\bf P}$ is restricted to
the set of all Hermitian projectors ${\bf P}:\C^{N} \to \C^{N}$ of rank
$M:= m_1m_2\cdots m_r$, which can be represented as a tensor product
${\bf P}:=P_1\otimes\dots\otimes P_r$ of
Hermitian projectors $P_j: \C^{n_j} \to \C^{n_j}$ of rank $m_j$.
Thus, one is faced with the constrained optimization task
\begin{equation}\label{gRq1}
\max  \tr(A{\bf P})
\quad
\textrm{subject to}\; {\bf P}\in \mathrm{Gr}^{\otimes r}({\bf m},{\bf n}),
\end{equation}
where $\mathrm{Gr}^{\otimes r}({\bf m},{\bf n})$
denotes the set of all Hermitian projectors of the above tensor type
and $({\bf m},{\bf n})$ is a shortcut for
$\big((m_1,n_1),\dots, (m_r,n_r)\big)$.
We will see that it makes sense to call the above
objective function ${\bf P} \mapsto \tr(A{\bf P})=:\rho_A({\bf P})$
the \emph{generalized Rayleigh-quotient} of $A$ with respect
to the partitioning $({\bf m},{\bf n})$.

To the best of the authors' knowledge, problem (\ref{gRq1}) has not
been discussed in the literature in this general setting.
However, depending on the structure of $A$ as well as on the choice of
$({\bf m},{\bf n})$, problem (\ref{gRq1}) relates
to well-known numerical linear algebra issues:

\smallskip

(i) For Hermitian matrices of rank-$1$, i.e. $A=vv^{\dagger}$,
it reduces to a best low-rank approximation problem for the tensor
$\mathcal{A}\in\C^{n_1\times n_2\times\dots\times n_r}$
which satisfies
$v=\mathrm{vec}(\mathcal{A})$, cf.~\cite{lmv2,sl}. Classical
application areas of such low-rank approximations can be found
in statistics, signal processing and data compression
\cite{pc,lmv1,lmv2,tlr}.

(ii) A recent application in quantum computing
plays a central role in characterizing and quantifying pure state
entanglement. Here, the distance
of a pure state (tensor)
to the set of all product states
(rank-$1$ tensors) provides a geometric measure for entanglement
\cite{dhk,nc,wg}.

(iii) Moreover, the challenging task of recovering subspaces of
possibly different dimensions from noisy data --- known as
subspace detection or subspace clustering problem
in computer vision and image processing \cite{vmp} --- can also be
cast into the above setting. More precisely,
for an appropriately chosen Hermitian matrix $A$
the subspace clustering task can be 
characterized by problem (\ref{gRq1}) in the sense that
for unperturbed data the global minima of the generalized
Rayleigh-quotient  are in unique correspondence with the sought
subspaces. Numerical experiments in Section 4 support
that even for noisy data the proposed optimization
yields reliable approximations of the unperturbed subspaces.

(iv) In \cite{broket}  a certain class of combinatorial problems are  recast as  optimization problems for trace functions on the special unitary group. For the case when $A$ is a diagonal matrix, optimization task \eqref{gRq1} is a generalization of the applications mentioned in \cite{broket}.

\smallskip

Our solution to problem ($\ref{gRq1}$) is based on the fact that the
constraint set $\mathrm{Gr}^{\otimes}({\bf m},{\bf n})$ can be
equipped with a Riemannian submanifold structure. This admits
the use of techniques from Riemannian  optimization --- a rather
new approach towards constrained optimization exploiting the
geometrical structure of the constraint set in order to develop
numerical algorithms \cite{ams, hm, udriste}.
In particular, we pursue two approaches: a Newton and a conjugated
gradient method.

On a Riemannian manifold, the intrinsic Newton method is
usually described by means of the Levi-Civita connection, performing
iterations along geodesics, see \cite{gd, st}.
A more general approach 
via local coordinates was initiated by Shub in \cite{shub} and further
discussed in \cite{ams, hht}.  Here, we follow the ideas in
\cite{hht} and use a pair of local parametrizations --- normal
coordinates for the push-forward and QR-type coordinates for the
pull-back --- satisfying an additional compatibility condition to
preserve quadratic convergence. 
Thus we obtain an intrinsically defined version of the classical Newton
algorithm with some computational flexibility.
Nevertheless, for high-dimensional problems its iterations are expensive, both in terms of computational complexity
and memory requirements. 
Therefore, we alternatively propose a conjugated gradient method,
which has the advantage of algorithmic simplicity at
a satisfactory
convergence rate. In doing so, we suggest to replace the global
line-search of the classical conjugated gradient method
by a one-dimensional Newton-step, which yields a better convergence
behavior near stationary points than the commonly used Armijo-rule.

As mentioned earlier, depending on the structure of $A$, the
above-specified problems  (i), (ii), (iii) and (iv) are particular cases
of  the optimization task ($\ref{gRq1}$).
For the best low-rank approximation of a tensor the standard
numerical approach 
is an alternating least-squares algorithm, known as higher-order
orthogonal iteration (HOOI) \cite{lmv2}. Recently, several new
methods also exploiting the geometric structure of the problem
have been published.
Newton algorithms have been proposed in \cite{es,ilav2},
quasi-Newton methods in \cite{sl}, conjugated gradient and trust
region methods in \cite{ilav1}. For high-dimensional tensors, all
Riemannian Newton algorithms manifest similar problems: too high
computational complexity and memory requirements.
Our conjugated gradient method is however, a good candidate to solve large scale problems. It exhibits locally a good convergence behavior, comparable to that of the quasi-Newton methods in \cite{sl} at much lower
computational costs, which considerably reduces the necessary 
CPU time.

For the problem of estimating a mixture of linear subspaces from
sampled data points, cf.~(iii), our numerical approach is an efficient
alternative to the classical ones: \emph{ad-hoc} type methods such as
K-subspace algorithms \cite{ho}, or probabilistic methods using a Maximum
Likelihood framework for the  estimation \cite{tb}.

\smallskip

The paper is organized as follows.
In  Section 2, we familiarize the reader with the basic ingredients
of Riemannian optimization.
In particular, we address the following topics: the Riemannian
submanifold structure of the constraint set
$\mathrm{Gr}^{\otimes r}({\bf m},{\bf n})$,
its isometry to the $r$-fold cartesian product of
Grassmannians, geodesics and parallel transport
and the computation of the intrinsic gradient and
Hessian for smooth objective functions.
Section 3 is dedicated to the problem of optimizing the generalized
Rayleigh-quotient $\rho_A$, including also a
detailed discussion on its relation to problems (i), (ii), (iii) and (iv).
Moreover, an analogy to the classical Rayleigh-quotient is also the subject of this section.
We compute the gradient and the Hessian
of the generalized Rayleigh-quotient and derive
critical point conditions. We end the section with a result on the generic non-degeneracy of its critical points.
In Section 4, a Newton-like and a conjugated gradient algorithm
as well as numerical simulations tailored to the previously mentioned
applications are given.

\section{\bf Preliminaries}

\subsection{\bf Riemannian structure of
$\mathrm{Gr}^{\otimes r}({\bf m},{\bf n})$}\label{Riem_geom}

We start our  
study on the optimization task ($\ref{gRq1}$)
with a brief  summary on the necessary notations
and basic concepts.

Let  $\mathfrak{her}_n$ be  the set of all  Hermitian $n\times n$ matrices $A$,
i.e. $A\in\C^{n\times n}\;\textrm{with}\; A^{\dagger}=A$, where $A^{\dagger}$
refers to the conjugate  transpose of $A.$ Moreover, let $\mathrm{SU}_n$ be the Lie group of all special unitary
matrices  and  $\mathfrak{su}_n$ its Lie-algebra,
i.e. $\Theta\in\mathrm{SU}_n$ if and only if
$\Theta^{\dagger}\Theta=I_n,\;\mathrm{det}\Theta =1$
and, respectively,
$\Omega\in\mathfrak{su}_n$  if and only if
$\Omega^{\dagger}=-\Omega$ and $\tr(\Omega)=0.$
The \emph{Grassmannian},
\begin{equation}\label{gr} \mathrm{Gr}_{m,n}:=\{P\in\C^{n\times
    n}\;|\;P=P^{\dagger}=P^2,\;\tr(P)=m\},\end{equation}
is the set of all rank $m$ Hermitian projection operators of $\C^n$. It is a smooth and compact submanifold of $\mathfrak{her}_n$ with  real dimension
$2m(n-m)$, whose tangent space at $P$ is given by
\begin{equation}\label{tangent}\mathrm{T}_{P}\mathrm{Gr}_{m,n}=\{[P,\Omega]:=P\Omega-\Omega P\;|\;\Omega\in\mathfrak{su}_{n}\},\end{equation}
cf.~\cite{hht}. Hence, every element $P\in\mathrm{Gr}_{m,n}$ and every tangent vector $\xi\in\mathrm{T}_{P}\mathrm{Gr}_{m,n}$ can be
written as
\begin{equation}\label{Pxi}
P=\Theta\Pi_{m,n}\Theta^{\dagger}\;\;\textrm{and}\;\;\xi=\Theta\zeta_{m,n}\Theta^{\dagger},
\end{equation} where $\Pi_{m,n}$ is the \emph{standard
projector} of rank $m$ acting on $\C^n$ and $\zeta_{m,n}$ denotes  a tangent
vector in the corresponding tangent space, i.e.
\begin{equation}\label{P_s}
\begin{array}{cc}
\Pi_{m,n}=\left[\begin{array}{cc}
I_m & 0\\
0 & 0
\end{array}\right], & \zeta_{m,n}=\left[\begin{array}{cc}
0 & Z\\
Z^{\dagger} & 0
\end{array}\right],\;\; Z\in\C^{m\times (n-m)}.
\end{array}
\end{equation} Whenever the values of $m$ and $n$ are clear from the context,
we will use the shortcuts $\Pi$ and $\zeta$.
With respect to the Riemannian metric induced by the Frobenius inner
product of $\mathfrak{her}_n$, the Grassmannian $\mathrm{Gr}_{m,n}$ is a
Riemannian submanifold and the unique  orthogonal projector onto
$\mathrm{T}_P\mathrm{Gr}_{m,n}$ is given by
\begin{equation}\label{ad}
\mathrm{ad}^2_P X=[P,[P,X]],\;\;\; X\in\mathfrak{her}_n.
\end{equation}

We define  the  \emph{ $r-$fold tensor product of
  Grassmannians} $\mathrm{Gr}_{m_j,n_j},\; j=1,\dots, r$
as the set \begin{equation}
 \mathrm{Gr}^{\otimes r}({\bf m}, {\bf n}):=\displaystyle \{ P_1\otimes\dots\otimes P_r\;|\; P_j\in\mathrm{ Gr}_{m_j,n_j},\; j=1,\dots,r\}
\end{equation}
of all rank-$M$ Hermitian projectors 
of $\mathbb{C}^N$ with $M:= m_1m_2\cdots m_r$ and $N:=n_1n_2\cdots n_r$,
which can be represented as a 
Kronecker product $P_1\otimes\dots\otimes P_r.$
Here, $({\bf m},{\bf n})$ stands for the multi index
\begin{equation}\label{mn}
({\bf m},{\bf n}):=\Big((m_1,n_1), (m_2,n_2), \dots, (m_r,n_r)\Big).
\end{equation} Then, $\mathrm{Gr}^{\otimes r}({\bf m}, {\bf n})$ can be
naturally equipped with a submanifold structure as the following result
shows.
\begin{prop}
The  $r-$fold tensor product of Grassmannians $\mathrm{Gr}^{\otimes r}({\bf m}, {\bf n})$ is a smooth and compact
submanifold of $\mathfrak{her}_N$
of real dimension $2\displaystyle\sum\limits_{i=1}^rm_i(n_i-m_i)$.
\end{prop}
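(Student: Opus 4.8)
The plan is to exhibit $\mathrm{Gr}^{\otimes r}({\bf m},{\bf n})$ as the image of a smooth embedding whose source is the $r$-fold Cartesian product $\mathrm{Gr}_{m_1,n_1}\times\dots\times\mathrm{Gr}_{m_r,n_r}$, which is itself a smooth compact manifold (a product of the smooth compact manifolds $\mathrm{Gr}_{m_j,n_j}$ of real dimension $2m_j(n_j-m_j)$, as recalled in the excerpt). Consider the Kronecker-product map
\begin{equation}\label{kronmap}
\Phi\colon \mathrm{Gr}_{m_1,n_1}\times\dots\times\mathrm{Gr}_{m_r,n_r}\To\mathfrak{her}_N,\qquad
\Phi(P_1,\dots,P_r):=P_1\otimes\dots\otimes P_r.
\end{equation}
The image of $\Phi$ is by definition $\mathrm{Gr}^{\otimes r}({\bf m},{\bf n})$, so compactness of the target follows immediately from compactness of the source and continuity of $\Phi$. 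It remains to show that $\Phi$ is a smooth embedding, i.e. a smooth injective immersion; since its domain is compact and $\mathfrak{her}_N$ is Hausdorff, a smooth injective immersion is automatically a homeomorphism onto its image, hence an embedding, and the image is then a smooth submanifold of the claimed dimension $2\sum_{i=1}^r m_i(n_i-m_i)$.

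Smoothness of $\Phi$ is clear, as the Kronecker product is multilinear hence polynomial in the entries. For injectivity I would argue that a nonzero projector $P_1\otimes\dots\otimes P_r$ determines each factor up to nothing at all: the ranges satisfy $\operatorname{im}(P_1\otimes\dots\otimes P_r)=\operatorname{im}P_1\otimes\dots\otimes\operatorname{im}P_r$, and a decomposable subspace of $\C^{n_1}\otimes\dots\otimes\C^{n_r}$ of the form $V_1\otimes\dots\otimes V_r$ with all $V_j\neq 0$ determines each $V_j$ uniquely (the $V_j$ can be recovered as suitable "marginals" of the tensor subspace); since an orthogonal projector is determined by its range, this forces $P_j=P_j'$ for all $j$. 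For the immersion property I would compute the differential: at $(P_1,\dots,P_r)$,
\begin{equation}\label{dPhi}
d\Phi_{(P_1,\dots,P_r)}(\xi_1,\dots,\xi_r)=\sum_{j=1}^r P_1\otimes\dots\otimes \xi_j\otimes\dots\otimes P_r,
\end{equation}
and I must show this vanishes only when all $\xi_j=0$. Using the block form \eqref{Pxi}–\eqref{P_s}, diagonalize simultaneously: write $P_j=\Theta_j\Pi_{m_j,n_j}\Theta_j^{\dagger}$ and $\xi_j=\Theta_j\zeta_j\Theta_j^{\dagger}$; conjugating \eqref{dPhi} by $\Theta_1\otimes\dots\otimes\Theta_r$ reduces to the case $P_j=\Pi_{m_j,n_j}$. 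Then the $j$-th summand $\Pi_{m_1}\otimes\dots\otimes\zeta_j\otimes\dots\otimes\Pi_{m_r}$ is supported, in the natural block decomposition of $\C^N$ induced by the $\Pi_{m_i}$, on the off-diagonal block that is "diagonal in all coordinates except the $j$-th"; these supports are disjoint for distinct $j$ (they touch different off-diagonal positions), so the sum in \eqref{dPhi} vanishes iff each term does, and $\Pi_{m_1}\otimes\dots\otimes\zeta_j\otimes\dots\otimes\Pi_{m_r}=0$ forces $\zeta_j=0$ since the other factors are nonzero. Hence $d\Phi$ is injective everywhere and $\Phi$ is an immersion. Counting dimensions, $\dim\Phi=\sum_j \dim\mathrm{Gr}_{m_j,n_j}=2\sum_{i=1}^r m_i(n_i-m_i)$.

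The main obstacle I anticipate is making the injectivity and immersion arguments genuinely rigorous rather than heuristic — specifically, cleanly proving that a nonzero decomposable subspace $V_1\otimes\dots\otimes V_r$ determines the factors $V_j$ uniquely (the natural route is to characterize $V_j$ intrinsically, e.g. as the span of all $j$-th-slot components appearing in elements of the tensor subspace, and to verify this recovers exactly $V_j$ when all factors are nonzero), and correspondingly verifying that the block-support sets of the summands in \eqref{dPhi} are genuinely pairwise disjoint. Everything else — smoothness, compactness, the dimension count, and the upgrade from injective immersion to embedding via compactness of the domain — is routine.
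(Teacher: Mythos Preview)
Your proof is correct, but it takes a genuinely different route from the paper. The paper establishes the submanifold structure by realizing $\mathrm{Gr}^{\otimes r}({\bf m},{\bf n})$ as an orbit of the compact Lie group $\mathrm{SU}({\bf n}):=\{\Theta_1\otimes\cdots\otimes\Theta_r\mid\Theta_j\in\mathrm{SU}_{n_j}\}$ acting on $\mathfrak{her}_N$ by conjugation, with the standard projector $\Pi_1\otimes\cdots\otimes\Pi_r$ as base point; smoothness and compactness then follow from the general fact that orbits of smooth compact group actions are compact submanifolds, and the dimension is read off from the quotient $\mathrm{SU}({\bf n})/\mathrm{Stab}$. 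Your argument instead exhibits $\mathrm{Gr}^{\otimes r}({\bf m},{\bf n})$ directly as the image of a smooth embedding of the Cartesian product of Grassmannians. This is in fact precisely the content of the paper's Proposition~2.3 (proved in the Appendix), which the authors state \emph{after} Proposition~2.1; you have in effect merged the two statements and bypassed the orbit machinery. The orbit approach buys the homogeneous-space description $\mathrm{SU}({\bf n})/\mathrm{Stab}$ for free and appeals to a single black-box theorem; your approach is more hands-on and self-contained, and yields the diffeomorphism with $\mathrm{Gr}^{\times r}({\bf m},{\bf n})$ as a byproduct. Incidentally, for injectivity the paper's Appendix gives a cleaner argument than your range-recovery sketch: from $\alpha_{ij}P_r=\beta_{ij}Q_r$ for all blocks one gets $P_r=\gamma Q_r$, and since both are nonzero projectors (eigenvalues $0$ and $1$) this forces $\gamma=1$; then induct. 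Your block-support argument for the immersion is fine and is essentially the orthogonality of the summands that the paper invokes elsewhere (cf.\ the proof of Corollary~3.15).
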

\begin{proof}
We consider the following smooth action
\begin{equation*}
\begin{array}{cc}\sigma: \mathrm{SU}({\bf n})\times
 \mathfrak{her}_N\rightarrow \mathfrak{her}_N, &
({\boldsymbol \Theta},Y)\mapsto {\boldsymbol \Theta}Y{\boldsymbol
  \Theta}^{\dagger},
\end{array}\end{equation*}
of the compact Lie group
\begin{equation}\label{SUloc}\mathrm{SU}({\bf n}):=\{{\boldsymbol
  \Theta}:=\displaystyle\Theta_1\otimes\dots\otimes \Theta_r \;| \; \Theta_j\in\mathrm{SU}_{n_j}\}\subset \mathrm{SU}_N.\end{equation}
Let $X\in \mathfrak{her}_N$ be of the form $X:=\displaystyle \Pi_1\otimes
\cdots\otimes \Pi_r,$ where $\Pi_j$ denotes the standard projector in $\mathrm{Gr}_{m_j,n_j}.$
Then, the orbit $\mathcal{O}(X):=\{{\boldsymbol \Theta} X {\boldsymbol
  \Theta}^{\dagger} | \; {\boldsymbol \Theta}\in\mathrm{SU}({\bf n})\}$ of $X$ coincides with
$\mathrm{Gr}^{\otimes r}({\bf m}, {\bf n})$. By \cite{hm} (pp. 44--46) we conclude that
the $r-$fold tensor product of Grassmannians is a smooth and compact
submanifold of $\mathfrak{her}_N$.
Moreover, $\mathcal{O}(X)\cong
\mathrm{SU}({\bf n})/\mathrm{Stab}(X)$,  where the stabilizer subgroup of $X$ is given by \begin{equation*}
\begin{array}{ll}
\mathrm{Stab}(X)  & :=\{{\boldsymbol \Theta}\in\mathrm{SU}({\bf n})\;|\; {\boldsymbol\Theta}X{\boldsymbol\Theta}^{\dagger}=X\}\\[1mm]
   & = \{{\boldsymbol \Theta}\in\mathrm{SU}({\bf n})\;|\; \Theta_j\Pi_j\Theta_j^{\dagger}=\Pi_j,\; j=1,\dots, r\}.
 \end{array}
\end{equation*}
It follows easily that the dimension of $\mathrm{Stab}(X)$ is $\displaystyle\sum\limits_{i=1}^r \,[m_i^2+(n_i-m_i)^2-1]$ and therefore,
$$\mathrm{dim}~(\mathrm{Gr}^{\otimes r}({\bf m}, {\bf n})) = \mathrm{dim}~
\mathrm{SU}({\bf n}) -\mathrm{dim}~ \mathrm{Stab}(X)=
2\displaystyle\sum\limits_{j=1}^rm_j(n_j-m_j)$$ is the dimension of the  $r-$fold tensor product of
 Grassmannians.
\end{proof}
\begin{rem}
$(a)\;$  Let $V\otimes W$ denote  the tensor product
of finite dimensional vector spaces $V$ and $W$, cf.~\cite{g,l} and let
$\displaystyle X\otimes Y : V\otimes W\rightarrow V\otimes W$
be the tensor product of $X\in\mathrm{End}(V)$
and $ Y\in\mathrm{End}(W),$ given by
$\displaystyle v\otimes w\mapsto \displaystyle Xv\otimes Yw,$
for all $v\in V$ and $w\in W.$  Moreover, let $B_V$ and $B_W$ be bases
of $V$ and $W$, respectively. Then, the matrix representation of
$X\otimes Y$ with respect to the product basis
$\{v \otimes w \;|\; v \in B_V,\, w \in B_W\}$ of  $V\otimes W$ is
given by the Kronecker product of the matrix representations of
$A$ and $B$ with respect to $B_V$ and $B_W$.
This clarifies the relation between the ``abstract'' tensor product
of linear maps and the Kronecker product of matrices and justifies
the term ``tensor product'' of Grassmannians when we  refer to
$\mathrm{Gr}^{\otimes r}({\bf m},{\bf n})$.\\
$(b)\;$ It is a well-known fact that the Grassmannian
$\mathrm{Gr}_{m,n}$ is diffeomorphic to the Grassmann manifold
$\mathrm{Grass}_{m,n}$ of all $m-$dimensional subspaces of $\C^n$,
cf.\cite{hm}. Therefore,
$\mathrm{Gr}_{m_1,n_1}\otimes \mathrm{Gr}_{m_2,n_2}$ is diffeomorphic
to \begin{equation}\{V_1\otimes V_2\;|\;
V_1\in\mathrm{Grass}_{m_1,n_1},\; V_2\in\mathrm{Grass}_{m_2,n_2}\}
\subset\mathrm{Grass}_{M,N},
\end{equation}
where $M:=m_1m_2$ and $N:=n_1n_2.$\\[2mm]
%
Both items (a) and (b) readily generalize to an arbitrary
number of Grassmannians.
\end{rem}

\smallskip

We conclude this subsection by pointing out an isometry between the $r-$fold
tensor product of Grassmannians  $\mathrm{Gr}^{\otimes r}({\bf m},{\bf n})$
and the \emph{direct  $r-$fold product of Grassmannians}
\begin{equation}\label{directGrass}
\mathrm{Gr}^{\times r}({\bf m},{\bf n}):=\{(P_1,\dots,P_r)\;|\; P_j\in\mathrm{Gr}_{m_j,n_j},\; j=1,\dots, r\}.
\end{equation}
The vector spaces 
$\mathfrak{her}_N$ and
$\mathfrak{her}_{n_1}\times \dots\times\mathfrak{her}_{n_r}$
endowed with the inner products
\begin{equation}\label{inner1}\langle X,Y \rangle:=\tr(XY)\end{equation}  and
\begin{equation}\label{innerprod3}
\Bigl < (X_1,\dots,X_r), (Y_1,\dots,Y_r) \Bigr > := \tr(X_1Y_1) +\dots+
\tr(X_rY_r),
\end{equation}
induce a Riemannian submanifold structure on $\mathrm{Gr}^{\otimes r}({\bf
  m}, {\bf n})$ and $\mathrm{Gr}^{\times r}({\bf m}, {\bf n})$,  respectively.
\begin{prop}\label{diffeo}
The map
\begin{equation}\label{iso1}
\begin{array}{cc}
\varphi:\mathrm{Gr}^{\times r}({\bf m},{\bf  n})\rightarrow
\mathrm{Gr}^{\otimes r}({\bf m},{\bf  n})\;,\;\;\;
(P_1,\dots,P_r)\mapsto P_1\otimes \dots\otimes P_r
\end{array}
\end{equation}is a diffeomorphism  between $\mathrm{Gr}^{\times r}({\bf
  m},{\bf n})$ and $\mathrm{Gr}^{\otimes r}({\bf m},{\bf n}).$ Moreover, $\varphi$ is a global
Riemannian isometry when the right-hand side of (\ref{innerprod3}) is replaced by  \begin{equation}\label{innerprod4}
 M_1\tr(X_1Y_1) +\dots+ M_r\tr(X_rY_r),
\end{equation}
with $M_j:=\displaystyle\prod\limits_{k=1,\;k\neq j}^rm_k,$ for $j=1,\dots,r.$
\end{prop}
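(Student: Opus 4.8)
The plan is to establish the two assertions in turn: first that $\varphi$ is a diffeomorphism by exhibiting an explicit smooth inverse built from partial traces, and then that it is an isometry by computing $d\varphi$ pointwise and comparing the two inner products.

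\textbf{Diffeomorphism.} The map $\varphi$ is the restriction of the Kronecker-product map $\mathfrak{her}_{n_1}\times\dots\times\mathfrak{her}_{n_r}\to\mathfrak{her}_N$, whose entries are polynomials, so $\varphi$ is smooth; and it is surjective onto $\mathrm{Gr}^{\otimes r}({\bf m},{\bf n})$ by the definition of the latter. To obtain a smooth inverse I would use the partial trace: for $j=1,\dots,r$ let $\Phi_j\colon\mathfrak{her}_N\to\mathfrak{her}_{n_j}$ be the (linear, hence smooth) partial trace over all tensor factors except the $j$-th. Using the mixed-product identity $(X_1\otimes\dots\otimes X_r)(Y_1\otimes\dots\otimes Y_r)=X_1Y_1\otimes\dots\otimes X_rY_r$, the multiplicativity $\tr(X_1\otimes\dots\otimes X_r)=\tr X_1\cdots\tr X_r$, and $\tr P_k=m_k$, one checks $\Phi_j(P_1\otimes\dots\otimes P_r)=\big(\prod_{k\ne j}m_k\big)P_j=M_jP_j$ on $\mathrm{Gr}^{\times r}({\bf m},{\bf n})$. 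Hence $\psi:=(M_1^{-1}\Phi_1,\dots,M_r^{-1}\Phi_r)$ restricts to a smooth map $\mathrm{Gr}^{\otimes r}({\bf m},{\bf n})\to\mathrm{Gr}^{\times r}({\bf m},{\bf n})$ which is a two-sided inverse of $\varphi$; in particular $\varphi$ is injective, and it is a diffeomorphism. (Once the isometry claim is in hand one could instead argue that $d\varphi$ is injective, so $\varphi$ is a bijective immersion between compact manifolds of the same dimension, again forcing a diffeomorphism.)

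\textbf{Isometry.} Here I would work at a fixed point $(P_1,\dots,P_r)$, whose tangent space in $\mathrm{Gr}^{\times r}({\bf m},{\bf n})$ is $\bigoplus_{j}\mathrm{T}_{P_j}\mathrm{Gr}_{m_j,n_j}$. Since (the extension of) $\varphi$ is multilinear in its $r$ arguments, the Leibniz rule gives
\begin{equation*}
d\varphi_{(P_1,\dots,P_r)}(\xi_1,\dots,\xi_r)=\sum_{j=1}^{r}P_1\otimes\dots\otimes P_{j-1}\otimes\xi_j\otimes P_{j+1}\otimes\dots\otimes P_r.
\end{equation*}
Substituting two such tangent vectors $(\xi_1,\dots,\xi_r)$ and $(\eta_1,\dots,\eta_r)$ into the Frobenius metric $\langle X,Y\rangle=\tr(XY)$ of $\mathfrak{her}_N$ and expanding the double sum with the two Kronecker identities above, $\langle d\varphi(\xi),d\varphi(\eta)\rangle$ becomes a sum over pairs $(i,j)$ of products $\prod_{k=1}^{r}\tr\!\big(A_k^{(j)}B_k^{(i)}\big)$, where $A_k^{(j)}=P_k$ for $k\ne j$, $A_j^{(j)}=\xi_j$, and likewise $B_k^{(i)}=P_k$ for $k\ne i$, $B_i^{(i)}=\eta_i$.

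The only non-routine step is the vanishing of the off-diagonal terms $i\ne j$: in such a term the factor with index $k=i$ equals $\tr(P_i\eta_i)$, and this is $0$ because every tangent vector of a Grassmannian is Frobenius-orthogonal to its base point. Concretely, writing $\eta_i=[P_i,\Omega]$ as in (\ref{tangent}) and using $P_i^2=P_i$ together with cyclicity of the trace, $\tr\!\big(P_i[P_i,\Omega]\big)=\tr(P_i\Omega)-\tr(P_i\Omega)=0$. Therefore only the diagonal terms $i=j$ remain, and with $\tr(P_k^2)=\tr P_k=m_k$ we get
\begin{equation*}
\langle d\varphi(\xi),d\varphi(\eta)\rangle=\sum_{j=1}^{r}\tr(\xi_j\eta_j)\prod_{k\ne j}m_k=\sum_{j=1}^{r}M_j\,\tr(\xi_j\eta_j),
\end{equation*}
which is exactly the inner product (\ref{innerprod4}) of $(\xi_1,\dots,\xi_r)$ and $(\eta_1,\dots,\eta_r)$. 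Hence $d\varphi$ is a linear isometry at every point, i.e.\ $\varphi$ is a global Riemannian isometry. I expect the orthogonality relation $\tr(P\xi)=0$ to be the one genuine observation here; it is also what forces the weights $M_j$ and shows they cannot be rescaled away.
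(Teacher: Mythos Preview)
Your proof is correct. The isometry half is essentially the paper's computation, though you supply the one detail the paper leaves implicit, namely that the cross terms $i\neq j$ vanish because $\tr(P_i\eta_i)=0$.

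The diffeomorphism half, however, is genuinely different. The paper argues injectivity directly by an induction on $r$ (from $P_1\otimes\cdots\otimes P_r=Q_1\otimes\cdots\otimes Q_r$ one gets $P_r=\gamma Q_r$ entrywise, forces $\gamma=1$ from the spectrum, and peels off one factor), then invokes compactness for continuity of the inverse and finally uses the isometry computation to see that $D\varphi$ is invertible, upgrading $\varphi$ to a local and hence global diffeomorphism. Your route via the partial traces $\Phi_j$ is more constructive: you exhibit an explicit smooth two-sided inverse $\psi=(M_1^{-1}\Phi_1,\dots,M_r^{-1}\Phi_r)$, which dispatches bijectivity and smoothness of $\varphi^{-1}$ in one stroke and decouples the diffeomorphism claim from the metric computation. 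The paper's argument is a bit more elementary (no partial traces), while yours is shorter and yields a usable formula for $\varphi^{-1}$; in fact the partial-trace maps are exactly the $\Psi_{A,j}$ that the paper introduces later, so your approach dovetails nicely with the rest of the analysis.
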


Note  that the isometry between $\mathrm{Gr}^{\times r}({\bf m},{\bf n})$ and
$\mathrm{Gr}^{\otimes r}({\bf m},{\bf n})$ is very special, as in general the
map
\begin{equation}\label{her}
\mathfrak{her}_{n_1}\times \dots\times\mathfrak{her}_{n_r}\rightarrow \mathfrak{her}_N,\;\;
(X_1,\hdots, X_r)\mapsto X_1\otimes\cdots\otimes  X_r
\end{equation} fails even to be injective. For the proof of Proposition $\ref{diffeo}$ we refer to the Appendix.

 \subsection{Geodesics and parallel transport} 
 It is well-known  that every Riemannian manifold $\mathcal{M}$
 carries a unique \textit{Riemannian} or
 \textit{Levi-Civita connection} $\boldsymbol{\nabla}$, e.g.~\cite{ams,hm,udriste}.
 By means of
 $\boldsymbol{\nabla}$, one defines parallel transport and geodesics
 as follows. Let $t \mapsto \mathcal{X}(t)$ be a vector field along a curve
 $\gamma$ on $\mathcal{M}$, i.e.~$\mathcal{X}(t)\in\mathrm{T}_{\gamma(t)}\mathcal{M}$
 for all $t \in \mathbb{R}$. Then, $\mathcal{X}$ is defined to be
 \textit{parallel along} $\gamma$ if  \begin{equation}\label{parallel}
 \boldsymbol{\nabla}_{\dot{\gamma}(t)}\mathcal{X}(t)=0
 \end{equation} for all $t\in \mathbb{R}$. Given $\xi\in\mathrm{T}_{\gamma(0)}\mathcal{M}$,
 there exists a unique parallel vector field $\mathcal{X}$ along $\gamma$
 such that $\mathcal{X}(0)=\xi$ and the vector
 $\mathcal{X}(t) \in\mathrm{T}_{\gamma(t)}\mathcal{M}$
 is called the \emph{parallel transport} of
 $\xi$ to $\mathrm{T}_{\gamma(t)}\mathcal{M}$ along $\gamma.$
 In particular, $\gamma$ is called a \textit{geodesic} on $\mathcal{M}$,
 if $\dot{\gamma}$ is parallel along $\gamma$.


 For the Grassmann manifold  $\mathrm{Gr}_{m,n}$, the curve $t\mapsto\gamma(t)=e^{-t[\xi,P]}Pe^{t[\xi,P]}$  describes the geodesic through $P\in\mathrm{Gr}_{m,n}$ in direction $\xi\in\mathrm{T}_P\mathrm{Gr}_{m,n}$, i.e. $\gamma(t)$ satisfies equation \eqref{parallel} with initial conditions $\gamma(0)=P$ and $\dot{\gamma}(0)=\xi$.
Similarly,  it can be verified that  the parallel transport of  $\eta\in \mathrm{T}_{P}\mathrm{Gr}_{ m, n}$ to $\mathrm{T}_{\gamma(t)}\mathrm{Gr}_{ m, n}$ along the geodesic through $P$ in direction $\xi$ is given by $\eta\mapsto e^{-t[\xi,P]}\eta e^{t[\xi,P]}$. These notions can be straight-forward generalized to the direct product of Grassmannians $\mathrm{Gr}^{\times r}({\bf m},{\bf n})$.

\subsection{The Riemannian gradient and Hessian}
First, let us recall that the \textit{Riemannian gradient}
at $P \in \mathcal{M}$ of a smooth objective function
$f: \mathcal{M} \to \R$ on a Riemannian manifold $\mathcal{M}$
is defined as the unique tangent vector
$\grad f(P)\in \mathrm{T}_P\mathcal{M}$ satisfying
\begin{equation}\label{grad0}
{\rm d}f(P) (\xi)=\langle \grad f(P),\xi\rangle
\end{equation}
$\textrm{for all}\; \xi\in \mathrm{T}_P\mathcal{M}$, where
${\rm d}f(P)$ denotes the differential (tangent map) of $f$ at $P$. 
Moreover, if $\boldsymbol{\nabla}$ is the \emph{Levi-Civita connection} on
$\mathcal{M}$, 
then the \textit{Riemannian Hessian} of $f$ at $P$
is the linear map
${\bf H}_f(P):\mathrm{T}_P\mathcal{M}\rightarrow \mathrm{T}_P\mathcal{M}$
defined by
\begin{equation}\label{subhess}
{\bf H}_f(P)\xi=\boldsymbol{\nabla}_{\xi}\grad f(P),
\end{equation}
for all $\xi\in\mathrm{T}_P\mathcal{M}.$
Now, if  $\mathcal{M}$ is a submanifold of a vector
space $V$, then
\eqref{grad0} and \eqref{subhess} simplify as follows.
Let $\widetilde{f}$ and $\widetilde{\mathcal{X}}$ be smooth extensions of
$f$ and of the vector field $\grad f$, respectively.
Then,
\begin{equation}\label{subgrad}
\grad f(P) =  \pi_P\big(\nabla\widetilde{f}(P)\big),
\quad
{\bf H}_f(P)\xi=\pi_P (D \widetilde{\mathcal{X}} (P)\xi),
\end{equation}
where $\pi_P$ is the orthogonal projection onto
$\mathrm{T}_{ P}\mathcal{M}$ and $\nabla\widetilde{f}$
denotes the standard gradient of $\widetilde{f}$ on $V$.

For the generalized Rayleigh-quotient $\rho_A$ on
$\mathrm{Gr}^{\times r}({\bf m},{\bf n})$, explicit formulas of the
gradient and Hessian will be given in Section 3.3.

\section{\bf The generalized Rayleigh-quotient}\label{pbform}
Let $\mathrm{Gr}^{\otimes r}({\bf m},{\bf n})$ be the $r-$fold tensor
product of Grassmannians  with $({\bf m},{\bf n})$
as in ($\ref{mn}$) and let  $A\in\mathfrak{her}_N,$ $N=n_1n_2\cdots n_r.$
In the following, we analyze the constrained optimization problem
\begin{equation}\label{approx}
\underset{{\bf P}\in \mathrm{Gr}^{\otimes r}({\bf m},{\bf n})}\max
\tr(A {\bf P}),
\end{equation}
which comprises problems from different areas, such as
\emph{ multilinear low-rank approximations of a tensor},
\emph{geometric measures of entanglement},  \emph{subspace clustering} and \emph{combinatorial optimization}.  These applications are naturally stated on a tensor product space. However, for the special case of the Grassmann manifold  they can be reformulated on a direct product space. To this purpose, we define the \emph{generalized Rayleigh-quotient} of
the matrix $A$ as \begin{equation}\label{gRq}
\begin{array}{cc}
\rho_A: \mathrm{Gr}^{\times r}({\bf m},{\bf n})\rightarrow \R, &
\rho_A(P_1,\dots,P_r) :=\displaystyle \tr\Big(A (P_1\otimes\dots\otimes P_r)\Big).
\end{array}
\end{equation}
Based on the isometry between $\mathrm{Gr}^{\otimes r}({\bf m},{\bf n})$ and
$\mathrm{Gr}^{\times r}({\bf m},{\bf n}),$ we can rewrite
problem ($\ref{approx}$) as an optimization task for $\rho_A$
\begin{equation}\label{pb}
 \underset{(P_1,\dots,P_r)\in \mathrm{Gr}^{\times r}({\bf m},{\bf n})}\max
 \rho_A(P_1,\dots,P_r).
\end{equation} In general this is not the case, as we have already pointed out in  \eqref{her}.

The term ``generalized Rayleigh-quotient" is
justified, since for $r=1$ we obtain the classical
Rayleigh-quotient $\rho_A(P)=\tr(AP).$  In the sequel we want to point out some similarities and differences between the generalized  and the classical Rayleigh-quotient.
 It is well known that under the assumption that there is a spectral gap between the eigenvalues of $A\in\mathfrak{her}_N$, there is a unique maximizer and a unique minimizer of the classical Rayleigh-quotient of $A$. Unfortunately, this is no longer the case for the generalized Rayleigh-quotient $\rho_A$. Global maximizers and  global minimizers exist since the generalized Rayleigh-quotient is defined on a compact manifold, but unlike the classical case, it admits also local extrema as the following example shows. For the case when $A$ is of rank one we refer to Example 3 in \cite{lmv2}.
\begin{ex}\label{ex} Let $A=\mathrm{diag}(\lambda_1, \lambda_2, \lambda_3, \lambda_4)\in\mathfrak{her}_{4}$ be a diagonal matrix with $\lambda_2>\lambda_3>\lambda_4>\lambda_1$ and $P_1^*,\; P_2^*\in \mathrm{Gr}_{1,2}$ of the form
\begin{equation}
\begin{array}{cc}
P_1^*=\left[\begin{array}{cc}
1 & 0\\
0 & 0
\end{array}\right] & \textrm{and }\;\; P_2^*=\left[\begin{array}{cc}
0 & 0\\
0 & 1 \end{array}\right].
\end{array}
\end{equation} The maximum of $\rho_A$ is obvious less or equal to $\lambda_2$. Since $\rho_A(P_1^*,P_2^*)=\lambda_2$, we have $(P_1^*,P_2^*)$ as the global maximizer of $\rho_A$. From \eqref{cp} it follows that all $(P_1,P_2)\in\mathrm{Gr}_{1,2}\times \mathrm{Gr}_{1,2}$ with $P_1$ and $P_2$ diagonal, are critical points of $\rho_A$. In particular $(P_2^*,P_1^*)$ is a critical point of $\rho_A$ with  $\rho_A(P_2^*,P_1^*)=\lambda_3<\lambda_2$.  Moreover, one can check by computing the Hessian of $\rho_A$ at $(P_2^*,P_1^*)$ , see \eqref{hessian}, that  $(P_2^*,P_1^*)$ is actually a local maximizer of $\rho_A$. Comparative to the classical Rayleigh-quotient, this strange behavior results from the fact that not all $4\times 4$ permutation matrices are of the form $\Theta_1\otimes\Theta_2$, with $\Theta_1,\;\Theta_2\in\mathrm{SU}_2$.
\end{ex}


While for the classical Rayleigh-quotient one knows that the maximizer and minimizer are orthogonal projectors onto the space spanned by the eigenvectors corresponding to the largest and, respectively, smallest eigenvalues of $A$, it is difficult to provide an analog characterization for the global extrema of the generalized Rayleigh-quotient for an arbitrary matrix $A$.  But, for particular $A$ and $r$ such a characterization is possible.\\ 
{\bf (a)} If $r=2$ and $A$ is of rank one, i.e.  $A=\mathrm{vec}(Y)\mathrm{vec}(Y)^{\dagger}$, with $Y\in\C^{n_1\times n_2}$, then the generalized Rayleigh-quotient can be rewritten as
    \begin{equation}
\rho_A(P_1,P_2)= \tr[\mathrm{vec}(Y)\mathrm{vec}(Y)^{\dagger}(P_1\otimes P_2)]
=\tr(Y^{\dagger}P_1YP_2).
\end{equation}
Under the assumption that $Y$ has full rank and distinct singular values there exist  one maximizer and one minimizer.
The maximizer $(P_1^*,P_2^*)\in\mathrm{Gr}^{\times 2}({\bf m}, {\bf n})$ of $\rho_A$ is given by the  orthogonal projectors onto the space spanned by the $m_*:=\min\{m_1,m_2\}$ left, respective right  singular vectors corresponding to the largest $m_*$ singular values. Similar for the minimizer, the singular vectors corresponding to the smallest $m_*$ singular values.\\
{\bf (b)} If $r$ is arbitrary and  $A$
 diagonalizable   via a transformation of
$\mathrm{SU}({\bf n})=\{\Theta_1\otimes\cdots\otimes\Theta_r\;|\; \Theta_j\in\mathrm{SU}_{n_j}\}$, then
we can assume without loss of generality that $A$ is diagonal.
Moreover, if $A$ can be written as $\Lambda_1\otimes\dots\otimes
\Lambda_r,\;\textrm{with}\; \Lambda_j$ diagonal,  which is always the case when $A=A_1\otimes\cdots\otimes A_r$,  $A_j\in\mathfrak{her}_{n_j}$, then the generalized Rayleigh-quotient becomes a product of $r$ decoupled classical Rayleigh-quotients.
Hence, there is one maximizer and one minimizer.
However, there is a dramatic change if $A$ cannot be written as a Kronecker product of diagonal matrices. In this case $\rho_A$ has also local extrema, as Example \ref{ex} shows.
 From  \eqref{cp} one can immediately formulate the following critical point characterization.
  \begin{prop}\label{prop_diag} Let $A\in\mathfrak{her}_{N}$ be diagonal. Then, $(P_1,\dots,P_r)\in\mathrm{Gr}^{\times r}({\bf m},{\bf n})$ is a critical point of $\rho_A$ if and only if  $P_j$ are permutations of the standard projectors $\Pi_j$, for all $j=1,\dots, r$. 
  \end{prop}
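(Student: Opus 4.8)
The plan is to derive the claimed characterization directly from the critical point equation for $\rho_A$, which is referenced in the text as~\eqref{cp} and must express that the Riemannian gradient of $\rho_A$ vanishes at $(P_1,\dots,P_r)$. Since $\rho_A$ is defined on the \emph{product} $\mathrm{Gr}^{\times r}({\bf m},{\bf n})$, its gradient splits into $r$ components, one in each $\mathrm{T}_{P_j}\mathrm{Gr}_{m_j,n_j}$; the critical point condition is therefore a system of $r$ equations, the $j$-th one involving the partial derivative of $\tr(A(P_1\otimes\cdots\otimes P_r))$ with respect to $P_j$. Using the orthogonal projector $\mathrm{ad}^2_{P_j}$ onto the tangent space from~\eqref{ad}, the $j$-th equation reads $[\,P_j,[P_j,G_j]\,]=0$, where $G_j$ is the ``partial trace'' type matrix obtained by contracting $A$ against $\bigotimes_{k\neq j}P_k$; equivalently $[P_j,G_j]=0$, i.e. $P_j$ commutes with $G_j$.

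First I would specialize to $A$ diagonal and compute $G_j$ explicitly. For $A$ diagonal the matrix $G_j=\operatorname{tr}_{\widehat{j}}\big(A\,(\,\bigotimes_{k\neq j}P_k\,)\big)\in\mathfrak{her}_{n_j}$ is again \emph{diagonal}, because a partial trace of a diagonal operator against any operators is diagonal in the remaining factor (each diagonal entry of $G_j$ is a nonnegative combination of diagonal entries of $A$ weighted by diagonal entries of $\bigotimes_{k\neq j}P_k$, which are themselves between $0$ and $1$). So the critical point equations reduce to: each $P_j$ commutes with a diagonal matrix $G_j$.

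Next I would extract the structure. For the ``if'' direction: if every $P_j$ is a permutation of the standard projector $\Pi_j$ — i.e. $P_j$ is a diagonal $0/1$ matrix of rank $m_j$ — then every $P_j$ is diagonal, hence trivially commutes with the diagonal $G_j$, so the critical point equations hold. For the ``only if'' direction I would use a genericity-free argument: the subtlety is that commuting with a diagonal matrix does \emph{not} by itself force $P_j$ to be diagonal (it only forces $P_j$ to be block-diagonal along the eigenspaces of $G_j$, and $G_j$ may have repeated diagonal entries). I expect the intended statement to carry an implicit genericity hypothesis on $A$ (the critical points are stated to be "generically" nondegenerate elsewhere), so I would argue that for $A$ diagonal with distinct enough diagonal entries each $G_j$ has simple spectrum, whence commuting with $G_j$ forces $P_j$ diagonal; being a rank-$m_j$ Hermitian projector that is diagonal, $P_j$ must be a $0/1$ diagonal matrix with exactly $m_j$ ones, i.e. a permutation of $\Pi_j$.

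\textbf{Main obstacle.} The delicate point is precisely the ``only if'' direction when $A$ is diagonal but degenerate: then $G_j$ can have repeated entries and $P_j$ need only be block-diagonal, not diagonal, so the clean statement as written would be false without a nondegeneracy assumption on $A$. I would resolve this by invoking the genericity/spectral-gap hypothesis that is implicit in the surrounding discussion (Example~\ref{ex} and the text both assume separation of eigenvalues), establishing that generically each $G_j$ has simple spectrum; the rest is then the routine linear-algebra fact that a Hermitian projector commuting with a matrix of simple spectrum is diagonal in that eigenbasis. The bookkeeping of which weighted sums of $\lambda$'s appear in $G_j$, and checking they are distinct for generic diagonal $A$, is the only real computation, and it is straightforward.
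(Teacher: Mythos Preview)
Your approach coincides with the paper's: the proposition is presented there as an immediate consequence of the critical point condition~\eqref{cp}, namely $[P_j,\widehat{A}_j]=0$, with no further argument given. Your observation that $\widehat{A}_j$ is diagonal whenever $A$ is diagonal (for arbitrary $P_k$, $k\neq j$) is correct --- by Lemma~\ref{t_d} the $(s,t)$ entry of $\widehat{A}_j$ carries a factor $e_s^\top I_{n_j}e_t=\delta_{st}$ --- and this settles the ``if'' direction exactly as you say.

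You are also right that the ``only if'' direction, as stated, cannot be established without an additional hypothesis. A concrete counterexample: take $r=2$, $n_1=n_2=2$, $m_1=m_2=1$, and $A=\mathrm{diag}(1,2)\otimes I_2=\mathrm{diag}(1,1,2,2)$. Then $\widehat{A}_2=\tr(\mathrm{diag}(1,2)P_1)\,I_2$ is a scalar multiple of the identity, so $[P_2,\widehat{A}_2]=0$ for \emph{every} $P_2\in\mathrm{Gr}_{1,2}$, and $(P_1^\ast,P_2)$ with $P_1^\ast$ diagonal and $P_2$ arbitrary is critical. The paper does not address this; your proposed remedy --- assume the diagonal entries of $A$ are generic so that each $\widehat{A}_j$ has simple spectrum --- is the natural fix and makes the argument go through. (For the application in~\eqref{equivalent} one can alternatively argue directly that the maximum is attained at permuted standard projectors, since $\rho_A$ depends only on the diagonals of the $P_j$ and is multilinear in them, hence maximized at vertices of the relevant hypersimplices.)

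One minor slip: the diagonal entries of $\widehat{A}_j$ are linear combinations of the (real, possibly negative) diagonal entries of $A$ with nonnegative weights $\prod_{k\neq j}(P_k)_{i_ki_k}\in[0,1]$; calling this a ``nonnegative combination'' is slightly misleading, though it does not affect the conclusion that $\widehat{A}_j$ is diagonal.
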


\subsection{Applications}\label{appl}

There is a wide range of applications for problem ($\ref{pb}$) in areas such
as  signal processing, computer vision and quantum information.
We briefly illustrate the broad potential of  ($\ref{pb}$) by four examples.

\subsubsection{\bf Best multilinear rank-$(m_1,\dots,m_r)$ tensor approximation} 
The problem of best approximation of a tensor by a tensor of lower rank
is important in areas such as statistics, signal processing and pattern
recognition. Unlike in the matrix case, there are several rank concepts for a
higher order tensor,  \cite{lmv2, sl}. For the scope of this paper, we focus on  the \emph{multilinear rank} case.

A finite dimensional complex tensor $\mathcal{A}$ of order $r$
is an element of a tensor product  $V_1\otimes\cdots\otimes V_r$,
where $V_1,\dots, V_r$ are complex vector spaces with
$\mathrm{dim}~V_j=n_j.$ Such an element can have various
representations, a common one is the description as an $r-$way
array, i.e.~after a choice of bases for $V_1,\dots, V_r$, the tensor
$\mathcal{A}$ is identified with
$\displaystyle [a_{i_1 \dots  i_r}]_{i_1=1, \dots, i_r=1}^{n_1,
 \dots, n_r}\in\C^{n_1\times n_2\times\dots\times n_r}$, see e.g.~\cite{sl}.
The $j-$th way of  the array is referred to as the $j-$th \emph{mode} of $\mathcal{A}$.
A matrix $X\in\C^{q_j\times n_j}$ acts on a tensor $\mathcal{A}\in\C^{n_1\times n_2\times\dots\times n_r}$ via \emph{mode$-j$ multiplication} $\times_j$, i.e.
\begin{equation}
(\mathcal{A}\times_j X)_{i_1\dots i_{j-1}k_1i_{j+1}\dots i_r}
=\sum\limits_{k_2=1}^{n_j}a_{i_1\dots i_{j-1}k_2i_{j+1}\dots i_r}x_{k_1k_2},
\end{equation} cf.~\cite{lmv1,sl}.

It is always possible to rearrange the elements of $\mathcal{A}$ along
one or, more general, several modes such that they form a
matrix. Let $l_1,\dots, l_q$ and $c_1,\dots, c_p$ be ordered subsets
of $1,\dots, r$ such that
$\{l_1,\dots, l_q\}\cup\{c_1,\dots, c_p\}=\{1,\dots, r\}$.
Moreover, consider the products
$ N_{k}:=n_{l_{k+1}}\cdots n_{l_q},\; N'_{k}:=n_{c_{k+1}}\cdots n_{c_p},$
for $k=0,\dots, q-1$
and $k=0,\dots, p-1$, respectively.
Then, the \emph{matrix unfolding} of $\mathcal{A}$ along
$(l_1,\dots, l_q)$ is a matrix
$A_{(l_1,\dots, l_q)}$ of size $N_{0}\times N'_{0}$
such that the element in position $(i_1,\dots, i_r)$ of
$\mathcal{A}$ moves to position $(s,t)$ in $A_{(l_1,\dots, l_q)}$,
where
\begin{equation}\label{matrix_jk}
s:={i}_{l_q}+\sum\limits_{k=1}^{q-1}({i}_{l_k}-1)N_{k}
\quad\quad\mbox{and}\quad\quad
t := i_{c_p}+\sum\limits_{k=1}^{p-1}(i_{c_k}-1)N'_{k}.
\end{equation}
 As an example, for a third order tensor $\mathcal{A}\in\C^{2\times 2\times 2}$ we obtain the following matrix unfoldings as in \cite{lmv1}
 \begin{equation*}
 \begin{array}{c}
 \begin{array}{cc}
 A_{(1)}=\left[\begin{array}{cccc}
 a_{111} & a_{112} & a_{121} & a_{122}\\
 a_{211} & a_{212} & a_{221} & a_{222}
 \end{array}
 \right], & A_{(2)}=\left[\begin{array}{cccc}
 a_{111} & a_{112} & a_{211} & a_{212}\\
 a_{121} & a_{122} & a_{221} & a_{222}
 \end{array}
 \right] ,
 \end{array}\\[3mm]
 A_{(3)}=\left[\begin{array}{cccc}
 a_{111} & a_{121} & a_{211} & a_{221}\\
 a_{112} & a_{122} & a_{212} & a_{222}
 \end{array}
 \right].
 \end{array}
 \end{equation*}
The \emph{multilinear rank} of $\mathcal{A}\in\C^{n_1\times\dots\times n_r}$
is the $r-$tuple $(m_1,\dots, m_r)$ such that \begin{equation}
m_1=\mathrm{rank}~A_{(1)}\;,\;\; \dots\;\;,\; m_r=\mathrm{rank}~A_{(r)}.
\end{equation} To refer to the multilinear rank of $\mathcal{A}$ we will use
the notation  rank-$(m_1,\dots,m_r)$ or $\mathrm{rank}~
\mathcal{A}=(m_1,\dots,m_r).$
Given a tensor $\mathcal{A}\in\C^{n_1\times \dots\times n_r},$  we are
interested in finding the best rank-$(m_1, \dots, m_r)$ approximation  of $\mathcal{A}$, i.e. \begin{equation}\label{multilinalg}
\underset{\mathrm{rank} (\mathcal{B})\leq(m_1, \dots, m_r)}{\min}\|\mathcal{A}-\mathcal{B}\|.
\end{equation}
Here, $\|\mathcal{A}\|$ is the Frobenius norm of a tensor,
i.e. $\|\mathcal{A}\|^2=\langle \mathcal{A},\mathcal{A}\rangle$ with
\begin{equation}\label{tensor_inn}
\langle\mathcal{A},\mathcal{B}\rangle=\mathrm{vec}(\mathcal{A})^{\dagger}\mathrm{vec}(\mathcal{B})
=\sum\limits_{i_1,\dots,i_r=1}^{n_1,\dots,n_r}\bar{a}_{i_1\dots i_r}b_{i_1\dots i_r}.
\end{equation} Here, $\mathrm{vec}(\mathcal{A})$ refers to the matrix unfolding $A_{(1,\dots,r)}\in\mathbb{C}^{N\times 1}$.
In the matrix case, the solution of the optimization problem
($\ref{multilinalg}$) is given by a truncated SVD, cf.~Eckart-Young
theorem \cite{ey}. However, for the higher-order case, there is no equivalent
of the Eckart-Young theorem. According to the Tucker decomposition \cite{tlr}
or the higher order singular value decomposition (HOSVD) \cite{lmv1}, any
rank-$(m_1,\dots,m_r)$ tensor can be written as a product of a \emph{core}
tensor $\mathcal{S}$ and $r$ Stiefel matrices $X_1\in\C^{m_1\times n_1}, \dots,\;
X_r\in\C^{m_r\times n_r}$ ,
i.e. \begin{equation*}\mathcal{B}=\mathcal{S}\times_1 X_1\times _2
  \dots\times_r X_r,\; \;\;X_j^{\dagger}X_j=I_{m_j},\; j=1,\dots, r.
 \end{equation*}
Thus,  solving   ($\ref{multilinalg}$)  is equivalent to solving the maximization  problem  \begin{equation}
\underset{X_1, \dots, X_r}{\max}\|\mathcal{A}\times_1 X_1\times_2  \dots\times_r X_r\|^2,
\end{equation}
with $ X_j^{\dagger}X_j=I_{m_j},\; j=1,\dots, r$, see e.g.~\cite{es}.  Using  $\mathrm{vec}-$operation and Kronecker product language,
one has \begin{equation}\mathrm{vec}(\mathcal{A}\times_1 X_1\times_2 \dots \times_r
  X_r)=\mathrm{vec}(\mathcal{A})^{\dagger} (X_1\otimes \cdots \otimes X_r).\end{equation} According to \eqref{tensor_inn} and the properties of the \emph{trace} function, the best multilinear rank-$(m_1,\dots,m_r)$ approximation problem becomes
\begin{equation}\label{A_1}
  \underset{(P_1,\dots, P_r)\in \mathrm{Gr}^{\times r}({\bf m},{\bf n})}\max
 \displaystyle \tr\Big(A (P_1\otimes \cdots\otimes P_r)\Big),
 \end{equation}
 with $A=\mathrm{vec}(\mathcal{A})\mathrm{vec}(\mathcal{A})^{\dagger}$ and
 $P_j=X_jX_j^{\dagger},\; j=1,\dots, r.$

\subsubsection{\bf A geometric measure of entanglement}
The task of characterizing and quantifying entanglement is a central
theme in quantum information theory. There exist various ways to
measure the difference between entangled and product states. Here,  we discuss a geometric measure of entanglement,
which is given by the Euclidean distance of $z\in\C^N$ with $\|z\|=1$ to the set of all product states
$\mathcal{P}=\{x_1 \otimes \cdots \otimes x_r\;|\; x_j \in
\C^{n_j},\; \|x_j\| = 1\}$, i.e.
\begin{equation}\label{eq12}
 \delta_{\mathrm{E}}(z) := \underset{x \in
\mathcal{P}}{\min}\|z-x\|^2.
\end{equation}
Since any minimizer of $\delta_E$ is also a maximizer of
 \begin{equation}\label{eq13}
 \underset{x_j \in
\C^{n_j},\; \|x_j\|=1}{\max} |z^{\dagger}(x_1\otimes \cdots\otimes x_r)|,
\end{equation} and vice versa, computing the entanglement measure (\ref{eq12}) is equivalent to solving
\begin{equation}\label{eq14}
  \underset{(P_1, ..., P_r)\in \mathrm{Gr}^{\times r}({\bf m},{\bf n})}\max
 \displaystyle \tr\Big(A (P_1\otimes\cdots\otimes P_r)\Big),
 \end{equation}
with $A=zz^{\dagger}$ and $P_1=x_1x_{1}^{\dagger}, \dots, P_r=x_rx_r^{\dagger}.$
Note that \eqref{eq14} actually constitutes a best rank$-(1,\dots, 1)$
tensor approximation problem \cite{dhk}.

\subsubsection{\bf  Subspace clustering} Subspace segmentation is a fundamental problem in many
applications in computer vision (e.g. image segmentation) and image
processing (e.g. image representation and compression). The problem of clustering data lying
on multiple subspaces of different dimensions can be stated as follows:

{Given a set of data points $X=\{{ x}_l\in\R^{n}\}_{j=1}^L$ which lie approximately in
  $r\geq 1 $ distinct subspaces
$S_k$ of dimension $d_k,\; 1\leq d_k<n$, identify the
subspaces $S_k$ without knowing in advance which points belong to which subspace.}

Every $d_k$ dimensional subspace $S_k\subset\R^n$ can be defined as the kernel of a rank
$m_k=n-d_k$ orthogonal projector $P_k$ of $\R^{n_k},$ with $n_k=n$ as \begin{equation}
S_k=\{x\in\R^n\;|\; P_kx=0\}.
\end{equation}
Therefore, any point $x\in\underset{k=1}{\overset{r}{\cup}} S_k$  satisfies \begin{equation}
\|P_1x\|\cdot\|P_2x\|\cdots\|P_rx\|=0,
\end{equation}
which is equivalent to   \begin{equation}
\tr(xx^{\top}P_1)\tr(xx^{\top}P_2)\cdots\tr(xx^{\top}P_r)=\displaystyle\tr\Big((xx^{\top}\otimes\cdots\otimes
xx^{\top})(P_1\otimes\cdots\otimes P_r)\Big)=0.
\end{equation} Thus, the problem of recovering the subspaces $S_k$ from the data points $X$ can be treated as the following optimization task: 
\begin{equation}\label{clustering}
\begin{array}{cc}
\underset{P\in \mathrm{Gr}^{\times r}({\bf m},{\bf n})}\min \displaystyle\sum\limits_{l=1}^{L} \prod\limits_{k=1}^r \|P_k{ x}_l\|^2
 & =
\underset{P\in \mathrm{Gr}^{\times r}({\bf m},{\bf n})}\min
 \displaystyle \tr\Big(A (P_1\otimes \dots\otimes P_r)\Big),
 \end{array}
\end{equation}
with $P:=(P_1,...,P_r)$ and
\begin{equation}\label{A_sc} A:=\sum\limits_{l=1}^L \underset{r \;\textrm{times}}{\underbrace{{ x}_l{ x}_l^{\top}\otimes \cdots\otimes{ x}_l{ x}_l^{\top}}}.
\end{equation}
We mention that here we have used the same notation $\mathrm{Gr}^{\times r}({\bf m}, {\bf n})$ to refer to the direct $r-$fold product of \textit{real} Grassmannians.

\smallskip

For best multilinear rank tensor approximation and subspace clustering
applications, numerical experiments are presented at the end of Section 4.

\subsubsection{\bf A combinatorial problem}
Let $\Lambda=(\lambda_{jk})_{j=1,k=1}^{n_2,n_1}$ be a given array of positive real numbers and let $m_1\leq n_1,\; m_2\leq n_2$ be fixed. Find $m_1$
 columns and $m_2$ rows such that the sum of the corresponding entries
$\lambda_{jk}$ is maximal, i.e. solve the combinatorial maximization problem
\begin{equation}\label{combinatorical}
\underset{\underset{|J|=m_2}{J\subset\{1,\dots,n_2\}}}\max
  \underset{\underset{|K|=m_1}{K\subset\{1,\dots,n_1\}}}\max\displaystyle\sum\limits_{j\in
    J,\; k\in K}\lambda_{jk}.
\end{equation}
We can permute $m_1$ columns and $m_2$ rows of $\Lambda$ by right and left multiplication with permutations  of the standard projectors $\Pi_1$ and $\Pi_2,$ respectively.  Hence,  problem \eqref{combinatorical} is solved by finding permutation matrices $\sigma_1$ and $\sigma_2$ which maximize:
\begin{equation}\label{combinatorical2}
\sum\limits_{i,j}  ( \Pi_{\sigma_2}\Lambda \Pi_{\sigma_1})_{ij},
\end{equation} where $\sum_{i,j}$ is the sum over all entries and  $\Pi_{\sigma_1}:=\sigma_1^{\top}\Pi_1 \sigma_1,\; \Pi_{\sigma_2}:=\sigma_2^{\top}\Pi_2 \sigma_2$.
The sum in \eqref{combinatorical2} can be written as
\begin{equation}\label{combinatorical3}
\sum\limits_{i,j}  (\Pi_{\sigma_2}\Lambda \Pi_{\sigma_1})_{ij}=\sum \limits_{i,j}\biggl( (\Pi_{\sigma_1}\otimes \Pi_{\sigma_2})\mathrm{vec}(\Lambda )\biggr)_{ij}=\tr\biggl(A (\Pi_{\sigma_1}\otimes \Pi_{\sigma_2})\biggr),
\end{equation} where $A:=\mathrm{diag}(\mathrm{vec}(\Lambda))$. The last equality in \eqref{combinatorical3} holds since $\Pi_{\sigma_1}\otimes \Pi_{\sigma_2}$ is diagonal, too.
 According to Proposition \ref{prop_diag},   we have the following equivalence
 \begin{equation}\label{equivalent}
  \underset{\sigma_1,\;\sigma_2}\max \tr\biggl(A (\Pi_{\sigma_1}\otimes \Pi_{\sigma_2})\biggr)\equiv
 \underset{(P_1,P_2)\in\mathrm{Gr}^{\times 2}({\bf m},{\bf n})}\max \tr\biggl(A (P_1\otimes P_2)\biggr).
 \end{equation} Hence, we can embed the combinatorial maximization problem \eqref{combinatorical} into our continuous optimization task \eqref{pb}.
 The generalization of \eqref{combinatorical} to $\Lambda$ being an arbitrary multi-array is straight-forward.

Problems of this type arise in multi-decision processes such as the following.
Assume that a company has $n_1$ branches and each branch produces $n_2$ goods. If $\lambda_{jk}$ denotes the gain of the $j-$th branch with the $k-$th good, then one could be interested to reduce the number of producers and goods to $m_1$ and $m_2$, respectively, which give maximum benefit.

\subsection{Riemannian optimization}\label{sec_grad}
We continue our investigation of problem ($\ref{pb}$) by computing the
gradient and the Hessian of $\rho_A$.
In the following lemma we establish multilinear maps $\Psi_{A,j}$, which will
enable us to derive clear expressions for the gradient and the Hessian of $\rho_A.$
\begin{lem}\label{lemma1}
Let $A\in\mathfrak{her}_N$  and $(X_1,\dots,
X_r)\in\mathfrak{her}_{n_1}\times\cdots\times\mathfrak{her}_{n_r}.$ Then, for all
$j=1,\dots,r$ there exists a unique map
$\Psi_{A,j}:\mathfrak{her}_{n_1}\times\cdots\times\mathfrak{her}_{n_r}\rightarrow \mathbb{C}^{n_j\times n_j}$
such that
\begin{equation}\label{lema1}
\tr\Big(A(X_1\otimes\dots\otimes X_j Z\otimes\dots\otimes X_r)\Big)
= \tr\Big( \Psi_{A,j}(X_1,\dots, X_r) Z\Big)
\end{equation}
holds for all $Z\in \mathbb{C}^{n_j\times n_j}.$ 
In particular, one has
\begin{equation}\label{delta_simple}
\begin{array}{ll}
\tr\bigg(A(X_1\otimes\dots\otimes
  X_r)\bigg)  & =\tr \bigg(\Psi_{A,1}(I_{n_1}, X_2,\dots, X_r) X_1\bigg)\\[2mm]
   & =\dots
= \tr \bigg(\Psi_{A,r}(X_1,\dots,X_{r-1}, I_{n_r}) X_r\bigg).
\end{array}
\end{equation}
Moreover, for $A := A_1\otimes\cdots\otimes A_r$ the maps
$\Psi_{A,j}$ exhibit the explicit form
\begin{equation}\label{scurtare1}
\Psi_{A,j}(X_1,\dots, X_r) =
\Bigg(\prod\limits_{k=1,\,k\neq j}^r
\tr(A_k X_k)\Bigg)A_jX_j.
\end{equation}
\end{lem}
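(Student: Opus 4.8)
The plan is to prove the three claims in order: existence and uniqueness of $\Psi_{A,j}$, the trace identities \eqref{delta_simple}, and the explicit formula \eqref{scurtare1} for rank-one-per-mode $A$. The key observation is that, for fixed $A$ and fixed matrices $X_k$ ($k\neq j$), the left-hand side of \eqref{lema1} is a \emph{linear} functional in the single matrix variable $Z\in\C^{n_j\times n_j}$. Since $\C^{n_j\times n_j}$ is a finite-dimensional inner-product space under $(U,V)\mapsto\tr(U^\top V)$ --- equivalently under the pairing $(M,Z)\mapsto\tr(MZ)$, which is nondegenerate --- every linear functional on it is represented by a unique matrix. Concretely, $Z\mapsto \tr\big(A(X_1\otimes\dots\otimes X_jZ\otimes\dots\otimes X_r)\big)$ is linear in $Z$, hence there is a unique $\Psi_{A,j}(X_1,\dots,X_r)\in\C^{n_j\times n_j}$ with the stated property. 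This gives existence and uniqueness simultaneously, and the linearity/multilinearity of $\Psi_{A,j}$ in the remaining arguments follows because the left-hand side of \eqref{lema1} depends (multi)linearly on $(X_1,\dots,X_r)$ and the representing map is unique.

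For \eqref{delta_simple}, one simply specializes. Writing $X_1=I_{n_1}\cdot X_1$ and taking $Z=X_1$, $j=1$ in \eqref{lema1} with the first slot's ``background'' matrix set to $I_{n_1}$ yields $\tr\big(A(X_1\otimes\dots\otimes X_r)\big)=\tr\big(\Psi_{A,1}(I_{n_1},X_2,\dots,X_r)X_1\big)$; the analogous substitution in the $j$-th slot gives the remaining equalities. Here one uses the elementary Kronecker-product facts that $I_n\otimes\cdots$ inserts correctly and that $(X_jZ)$ with $Z=X_j$, $X_j=I$ recovers $X_j$ --- all routine.

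For \eqref{scurtare1}, take $A=A_1\otimes\cdots\otimes A_r$ and use the multiplicativity of the trace on Kronecker products, $\tr(B_1\otimes\cdots\otimes B_r)=\prod_k\tr(B_k)$, together with $(A_1\otimes\cdots\otimes A_r)(C_1\otimes\cdots\otimes C_r)=(A_1C_1)\otimes\cdots\otimes(A_rC_r)$. With $C_k=X_k$ for $k\neq j$ and $C_j=X_jZ$, the left-hand side of \eqref{lema1} becomes $\big(\prod_{k\neq j}\tr(A_kX_k)\big)\tr(A_jX_jZ)$, and comparing with the right-hand side $\tr(\Psi_{A,j}(X_1,\dots,X_r)Z)$ for all $Z$ forces $\Psi_{A,j}(X_1,\dots,X_r)=\big(\prod_{k\neq j}\tr(A_kX_k)\big)A_jX_j$ by the uniqueness already established.

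I do not expect any real obstacle here; the only point requiring a little care is the bookkeeping with Kronecker products in establishing linearity of the left-hand side of \eqref{lema1} in $Z$ (so that the Riesz-type representation argument applies) and in verifying the Kronecker identities used for \eqref{scurtare1}. The nondegeneracy of the bilinear form $(M,Z)\mapsto\tr(MZ)$ on $\C^{n_j\times n_j}$ --- which is what makes the representing matrix \emph{unique} --- is standard and can be invoked without proof.
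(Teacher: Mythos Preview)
Your proposal is correct and follows essentially the same route as the paper: fix the remaining arguments, observe that the left-hand side of \eqref{lema1} is a linear functional in $Z$, invoke the Riesz-type representation via the nondegenerate trace pairing to obtain a unique representing matrix $\Psi_{A,j}(X_1,\dots,X_r)$, then specialize $Z:=X_j$, $X_j:=I_{n_j}$ for \eqref{delta_simple} and use the multiplicativity of trace on Kronecker products for \eqref{scurtare1}. There is nothing to add.
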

\begin{proof}
Fix $j$ and consider the linear functional
\begin{equation*}
Z \mapsto \lambda_A(Z) :=
\tr\Big(A(X_1\otimes\dots\otimes X_j Z\otimes\dots\otimes X_r)\Big).
\end{equation*}
By the Riesz representation theorem, there exists a unique
$B_j \in \mathbb{C}^{n_j\times n_j}$ 
 such that
$\lambda_A(Z) = \tr\big(B_j Z\big)$
for all $Z \in \mathbb{C}^{n_j\times n_j}$. 
Therefore, the map $\Psi_{A,j}$ is given by\\ $(X_1,\dots, X_r) \mapsto \Psi_{A,j}(X_1,\dots, X_r) := B_j$.
It is straightforward to show that $\Psi_{A,j}$ is multilinear in
$X_1,\dots, X_r$. Now, choosing
$Z := X_j$ and $X_j:=I_{n_j}$ in \eqref{lema1} immediately yields \eqref{delta_simple}.
Moreover, \eqref{scurtare1} follows from the trace equality
\begin{equation*}
\tr\big(A_1X_1\otimes\dots\otimes A_jX_jZ\otimes\dots\otimes A_rX_r\big)
= \bigg(\prod\limits_{k=1,\,k\neq j}^r \tr(A_k X_k)\bigg)
\tr(A_jX_jZ).
\end{equation*}
Thus the proof of Lemma \ref{lemma1} is complete.
\end{proof}
\begin{rem}
The linear maps $\Psi_{A,j}$ constructed in the above proof are almost
identical to the so-called partial trace operators --- a well-known
concept from multilinear algebra and quantum mechanics \cite{br}.
\end{rem}

Next, we show how to compute $\Psi_{A,j}(X_1,\dots, X_r)$ for given
$(X_1,\dots,X_r)\in\mathfrak{her}_{n_1}\times\cdots\times\mathfrak{her}_{n_r}$
if $A$ is not a pure tensor product $A_1\otimes\cdots\otimes A_r$.
\begin{lem}\label{t_d}
Let $A\in\mathfrak{her}_N$ and
$(X_1,\dots,X_r)\in\mathfrak{her}_{n_1}\times\cdots\times\mathfrak{her}_{n_r}$.
Then, 
the $(s,t)-$position of $\Psi_{A,j}(X_1, \dots, X_r) \in \mathbb{C}^{n_j\times n_j}$
is given by
\begin{equation}\label{tau_delta}
\mathlarger{\mathlarger{\sum}}\limits_{\substack{i_l=1,\, l \neq j \\ l=1,\dots,r}}^{n_l}
\mathrm{e}_{i_1}^\top\otimes\cdots\otimes\mathrm{e}_{s}^\top\otimes\cdots\otimes\mathrm{e}_{i_r}^\top
A (X_1\otimes\cdots\otimes X_r)
\mathrm{e}_{i_1}\otimes\cdots\otimes\mathrm{e}_{t}\otimes\cdots\otimes \mathrm{e}_{i_r},
\end{equation}
where
$\{\mathrm{e}_{i_l}\}_{i_l=1}^{n_l}$ denotes the standard basis of  $\mathbb{C}^{n_l}$.
\end{lem}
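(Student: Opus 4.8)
The plan is to read off the $(s,t)$-entry of $\Psi_{A,j}(X_1,\dots,X_r)$ directly from its defining property, by testing that property against a single rank-one matrix and then expanding the resulting trace over the standard Kronecker basis of $\C^N$. Recall from the proof of Lemma~\ref{lemma1} that $\Psi_{A,j}(X_1,\dots,X_r)$ is the unique matrix in $\C^{n_j\times n_j}$ satisfying
\begin{equation*}
\tr\big(\Psi_{A,j}(X_1,\dots,X_r)\,Z\big)=\tr\big(A(X_1\otimes\dots\otimes X_jZ\otimes\dots\otimes X_r)\big)
\end{equation*}
for all $Z\in\C^{n_j\times n_j}$. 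I would substitute $Z:=\mathrm{e}_t\mathrm{e}_s^\top$, the matrix whose only nonzero entry sits in position $(t,s)$. By cyclicity of the trace the left-hand side collapses to $\mathrm{e}_s^\top\Psi_{A,j}(X_1,\dots,X_r)\,\mathrm{e}_t$, which is exactly the $(s,t)$-position we want to compute.

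It then remains to unwind the right-hand side $\tr\big(A(X_1\otimes\dots\otimes X_j\mathrm{e}_t\mathrm{e}_s^\top\otimes\dots\otimes X_r)\big)$. Writing the trace of an $N\times N$ matrix as the sum of its diagonal entries in the standard basis $\{\mathrm{e}_{i_1}\otimes\dots\otimes\mathrm{e}_{i_r}\}$ of $\C^N$, the typical summand is $\mathrm{e}_{i_1}^\top\otimes\dots\otimes\mathrm{e}_{i_r}^\top\cdot A\cdot(X_1\mathrm{e}_{i_1})\otimes\dots\otimes(X_j\mathrm{e}_t\mathrm{e}_s^\top\mathrm{e}_{i_j})\otimes\dots\otimes(X_r\mathrm{e}_{i_r})$. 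Since $\mathrm{e}_s^\top\mathrm{e}_{i_j}=\delta_{s,i_j}$, only the terms with $i_j=s$ survive, and for those the $j$-th row factor is $\mathrm{e}_s^\top$ while the $j$-th column factor reduces to $X_j\mathrm{e}_t$. Re-collecting the Kronecker factors via $(X_1\mathrm{e}_{i_1})\otimes\dots\otimes(X_j\mathrm{e}_t)\otimes\dots\otimes(X_r\mathrm{e}_{i_r})=(X_1\otimes\dots\otimes X_r)(\mathrm{e}_{i_1}\otimes\dots\otimes\mathrm{e}_t\otimes\dots\otimes\mathrm{e}_{i_r})$, the sum over the remaining indices $i_l$, $l\neq j$, becomes precisely the double sum in \eqref{tau_delta}.

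The argument is essentially bookkeeping, so I do not foresee a substantive obstacle: existence and uniqueness of $\Psi_{A,j}$ are already supplied by Lemma~\ref{lemma1}, and what is left is to unwind notation. The one point that demands care is keeping track of the distinguished $j$-th tensor slot throughout the computation --- it is the only index frozen, to $s$ on the row side and to $t$ on the column side, while the remaining $r-1$ indices are genuinely summed over $1,\dots,n_l$ --- together with a consistent use of the ordering convention that identifies $\mathrm{e}_{i_1}\otimes\dots\otimes\mathrm{e}_{i_r}$ with the standard basis of $\C^N$.
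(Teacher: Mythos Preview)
Your proposal is correct and follows essentially the same route as the paper: both substitute $Z=\mathrm{e}_t\mathrm{e}_s^\top$ into the defining identity of $\Psi_{A,j}$ and then expand the resulting trace over the Kronecker basis of $\C^N$. The only cosmetic difference is that the paper first factors the $j$-th slot as $X_j\mathrm{e}_t\mathrm{e}_s^\top = X_j\cdot(\mathrm{e}_t\mathrm{e}_s^\top)$ to pull out $(X_1\otimes\cdots\otimes X_r)$ and then expands $I_{n_l}=\sum_{i_l}\mathrm{e}_{i_l}\mathrm{e}_{i_l}^\top$, whereas you expand the trace first and collapse the $i_j$-sum via $\mathrm{e}_s^\top\mathrm{e}_{i_j}=\delta_{s,i_j}$; the computations are equivalent.
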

\begin{proof}
Let $1\leq s, t\leq n_j$. Then, the element in the $(s,t)$ position of the matrix $\Psi_{A,j}(X_1, \dots, X_r)$ is given by
\begin{equation*}
\begin{array}{ll}
 \mathrm{e}_{s}^{\top}\Psi_{A,j}(X_1, \dots, X_r)\mathrm{e}_{t} & = \tr\biggl(\Psi_{A,j}(X_1, \dots, X_r)\mathrm{e}_{t}\mathrm{e}_{s}^{\top}\biggr)\\[3mm]
 & = \tr \biggl(A(X_1\otimes\cdots\otimes X_j\mathrm{e}_{t}\mathrm{e}_{s}^{\top}\otimes\cdots\otimes X_r ) \biggr)\\[3mm]
 & = \tr\biggl(A (X_1\otimes\cdots\otimes X_r)(I_{n_1}\otimes\cdots\otimes \mathrm{e}_{t}\mathrm{e}_{s}^{\top}\otimes\cdots\otimes I_{n_r} ) \biggr).
\end{array}
\end{equation*}
Hence, \eqref{tau_delta} follows from the identity
$I_{n_l}=\sum\limits_{i_l=1}^{n_l}\mathrm{e}_{i_l}\mathrm{e}_{i_l}^{\top}$.
\end{proof}
\begin{rem} Let $A\in\mathfrak{her}_N$ and
$(X_1,\dots,X_r)\in\mathfrak{her}_{n_1}\times\cdots\times\mathfrak{her}_{n_r}$.
A straightforward consequence of the identity
\begin{equation}\label{id}
\tr\biggl(A (X_1\otimes\cdots \otimes
Z\otimes \cdots\otimes X_r)\biggr)^\dagger
= \tr\biggl(A(X_1\otimes\cdots \otimes
Z^\dagger \otimes \cdots\otimes X_r) \biggr),
\end{equation}  for all $Z\in\C^{n_j\times n_j}$, shows that
$\Psi_{A,j}(X_1,\dots, I_{n_j}, \dots,  X_r)$ is Hermitian.
\end{rem}
For simplicity of writing, whenever
$(P_1,\dots, P_r)\in\mathrm{Gr}^{\times r}({\bf m},{\bf n})$
is understood from the context,  we use the following shortcut
\begin{equation}\label{shortcut}
\widehat{A}_j:=\Psi_{A,j}(P_1,\dots, I_{n_j}, \dots,  P_r).
\end{equation}

Now, we can give an explicit formula for the Riemannian gradient of
$\rho_A$ and derive necessary and sufficient critical point conditions.
\begin{thm}\label{critic}
Let $A\in\mathfrak{her}_ N,\; P:= (P_1,\dots, P_r)\in\mathrm{Gr}^{\times r}({\bf m},{\bf n})$  and let  $\rho_A$ be the generalized Rayleigh-quotient on $\mathrm{Gr}^{\times r}({\bf m},{\bf n}).$
Then, one has the following:

$(i)\;\;\;$ The gradient of $\rho_A$ at $P$ with respect to the Riemannian metric $(\ref{innerprod3})$  is
\begin{equation}\label{grad}
\grad \rho_A(P) = \displaystyle\left ( \mathrm{ad}^2_{P_1}\widehat{A}_1,
\dots,\mathrm{ad}^2_{P_r} \widehat{A}_r\right).
\end{equation}

$(ii)\;\;\;$ The critical points of $\rho_A$ on $\mathrm{Gr}^{\times r}({\bf m},{\bf n})$ are characterized by
\begin{equation}\label{cp}
[P_j, \widehat{A}_j ] = 0
\end{equation}  i.e.  $P_j$, $j=1,\dots,r$ is the orthogonal projector onto an $m_j-$dimensional invariant subspace  of $\widehat{A}_j$.
\end{thm}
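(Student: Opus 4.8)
The plan is to regard $\mathrm{Gr}^{\times r}({\bf m},{\bf n})$ as a Riemannian submanifold of the ambient vector space $\mathfrak{her}_{n_1}\times\cdots\times\mathfrak{her}_{n_r}$ with the product inner product \eqref{innerprod3}, so that the submanifold formulas \eqref{subgrad} apply: one computes the Euclidean gradient of the obvious smooth extension $\widetilde\rho_A(X_1,\dots,X_r):=\tr\big(A(X_1\otimes\cdots\otimes X_r)\big)$ of $\rho_A$ and then projects it orthogonally onto the tangent space.

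For $(i)$ I would first differentiate $\widetilde\rho_A$. Since $(X_1,\dots,X_r)\mapsto X_1\otimes\cdots\otimes X_r$ is multilinear, the Leibniz rule yields
\[
\mathrm{d}\widetilde\rho_A(P)(\xi_1,\dots,\xi_r)=\sum_{j=1}^{r}\tr\big(A(P_1\otimes\cdots\otimes\xi_j\otimes\cdots\otimes P_r)\big).
\]
For each $j$ the linear functional $Z\mapsto\tr\big(A(P_1\otimes\cdots\otimes Z\otimes\cdots\otimes P_r)\big)$ is exactly the one occurring in Lemma~\ref{lemma1} with the $j$-th argument set to $I_{n_j}$, hence it is represented by $\Psi_{A,j}(P_1,\dots,I_{n_j},\dots,P_r)=\widehat A_j$. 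Thus $\mathrm{d}\widetilde\rho_A(P)(\xi_1,\dots,\xi_r)=\sum_{j}\tr(\widehat A_j\xi_j)$; since each $\widehat A_j$ is Hermitian (Remark following Lemma~\ref{t_d}) and the $\xi_j$ range over $\mathfrak{her}_{n_j}$, the Euclidean gradient of $\widetilde\rho_A$ for the metric \eqref{innerprod3} is $(\widehat A_1,\dots,\widehat A_r)$. The tangent space of $\mathrm{Gr}^{\times r}({\bf m},{\bf n})$ at $P$ is the product $\prod_{j}\mathrm{T}_{P_j}\mathrm{Gr}_{m_j,n_j}$, onto which the orthogonal projection acts coordinatewise by $\mathrm{ad}^2_{P_j}$, cf.~\eqref{ad}; applying \eqref{subgrad} coordinatewise gives \eqref{grad}.

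For $(ii)$, $P$ is a critical point iff $\grad\rho_A(P)=0$, i.e.~$\mathrm{ad}^2_{P_j}\widehat A_j=0$ for every $j$, so it remains to check that this is equivalent to $[P_j,\widehat A_j]=0$. One implication is immediate from $\mathrm{ad}^2_{P_j}\widehat A_j=[P_j,[P_j,\widehat A_j]]$; for the other, cyclicity of the trace together with $[P_j,\widehat A_j]^\dagger=-[P_j,\widehat A_j]$ (valid because $P_j$ and $\widehat A_j$ are Hermitian) gives
\[
\langle\mathrm{ad}^2_{P_j}\widehat A_j,\widehat A_j\rangle
=\tr\big([P_j,[P_j,\widehat A_j]]\,\widehat A_j\big)
=-\tr\big([P_j,\widehat A_j]^2\big)
=\big\|[P_j,\widehat A_j]\big\|^2 ,
\]
so the vanishing of $\mathrm{ad}^2_{P_j}\widehat A_j$ forces $[P_j,\widehat A_j]=0$. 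Finally $[P_j,\widehat A_j]=0$ means $P_j$ and $\widehat A_j$ commute, which is equivalent to $\im P_j$ being invariant under $\widehat A_j$; since $\tr P_j=m_j$ this subspace has dimension $m_j$, and conversely invariance of an $m_j$-dimensional subspace under the Hermitian matrix $\widehat A_j$ forces its orthogonal complement to be invariant too, so $\widehat A_j$ is block diagonal along $\im P_j\oplus\ker P_j$ and commutes with $P_j$. This establishes \eqref{cp}.

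The computation is essentially routine; the two places that need care are the clean identification of the Euclidean gradient with $\widehat A_j$ through the correct specialization of Lemma~\ref{lemma1}, and the equivalence $\mathrm{ad}^2_{P_j}\widehat A_j=0\iff[P_j,\widehat A_j]=0$, for which the displayed trace identity --- reflecting that $\mathrm{ad}^2_{P_j}$ is an orthogonal projection --- is the key observation.
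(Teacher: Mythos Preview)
Your argument is correct and follows the same approach as the paper for part~(i): extend $\rho_A$ multilinearly, differentiate via the Leibniz rule, identify the Euclidean gradient componentwise as $\widehat A_j$ through Lemma~\ref{lemma1}, and project with $\mathrm{ad}^2_{P_j}$. For part~(ii) the only difference is in the technical step $\mathrm{ad}^2_{P_j}\widehat A_j=0\Rightarrow[P_j,\widehat A_j]=0$: the paper argues by multiplying the relation $P_j[P_j,\widehat A_j]=[P_j,\widehat A_j]P_j$ from the left and from the right by the idempotent $P_j$ to obtain $P_j\widehat A_j=P_j\widehat A_j P_j=\widehat A_j P_j$, whereas you use the trace identity $\langle\mathrm{ad}^2_{P_j}\widehat A_j,\widehat A_j\rangle=\|[P_j,\widehat A_j]\|^2$, which encodes the self-adjointness of $\mathrm{ad}^2_{P_j}$. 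Both are short and elementary; your version has the conceptual advantage of making transparent \emph{why} the implication holds (orthogonal projections have kernel equal to the orthogonal complement of the image), while the paper's version is perhaps more hands-on.
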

\begin{proof}$(i)\;$ Fix $P:=(P_1,\dots, P_r)\in\mathrm{Gr}^{\times r}({\bf m},{\bf n})$  and let $\widetilde{\rho}_A$ denote the canonical smooth extension of $\rho_A$ to $\mathfrak{her}_{n_1}\times\cdots\times\mathfrak{her}_{n_r}.$
Then,
\begin{equation}
D\widetilde{\rho}_A(P)(X)=\sum\limits_{j=1}^r\tr\Big(A(
P_1\otimes\dots\otimes X_j\otimes \dots \otimes
P_r)\Big)=\sum\limits_{j=1}^r \tr(\widehat{A}_jX_j),
\end{equation} for all $X:=(X_1,\dots, X_r)\in \mathfrak{her}_{n_1}\times\cdots\times\mathfrak{her}_{n_r}.$
   From (\ref{innerprod3}), we obtain that  the gradient of $\widetilde{\rho}_A$ at $P$ is given by
  $ \nabla\widetilde{\rho}_A(P) = (\widehat{A}_1,\dots,\widehat{A}_r).$
  Thus,  according to  \eqref{ad} and (\ref{subgrad}),  \begin{equation}
 \grad~ \rho_A(P) = \displaystyle\left ( \mathrm{ad}^2_{P_1}\widehat{A}_{1},
\dots,\mathrm{ad}^2_{P_r} \widehat{A}_{r}\right).
\end{equation}
$(ii)\;$ $P:=(P_1,\dots, P_r)\in\mathrm{Gr}^{\times r}({\bf m},{\bf n})$ is a critical point of
$\rho_A$ iff  $\grad~\rho_A (P)=0$.  This is equivalent to
\begin{equation}\label{interm1}
P_j[P_j,\widehat{A}_j]=\displaystyle [P_j,\widehat{A}_j]P_j,
\end{equation}
 for all $j=1,\dots,r.$ By multiplying \eqref{interm1} once from the left with $P_j$ and once from the right with $P_j$, we obtain that $P_j\widehat{A}_j=P_j\widehat{A}_jP_j$ and $\widehat{A}_jP_j=P_j\widehat{A}_jP_j$. Hence, the conclusion $[P_j,\widehat{A}_j]=0$ holds for all $j=1,\dots, r$.
\end{proof}

\smallskip

As a consequence of  Theorem $\ref{critic}$, we immediately obtain  the following  necessary and sufficient critical point condition.
\begin{cor}\label{critic1}
Let $A\in\mathfrak{her}_ N,\; P:= (P_1,\dots, P_r)\in\mathrm{Gr}^{\times r}({\bf m},{\bf n})$  and let $\Theta_j\in\mathrm{SU}_{n_j}$ be such that $\Theta_j^{\dagger}P_j\Theta_j=\Pi_j,$  where $\Pi_j$ is the standard projector in $\mathrm{Gr}_{m_j,n_j}.$
We write \begin{equation}\label{critic2}
\Theta_j^{\dagger}\widehat{A}_j\Theta_j = \left[
\begin{array}{cc}
 \Psi_{j}' & \Psi_j'''\\[1mm]
 \Psi_j'''^{\dagger} & \Psi_{j}''\\
\end{array}
\right], \end{equation} with $\Psi_{j}'\in\mathfrak{her}_{m_j},\; \Psi_{j}''\in\mathfrak{her}_{n_j-m_j},$ and $\Psi_j'''\in\C^{m_j\times(n_j-m_j)}$. Then
$P$ is a critical point of $\rho_A$ if and only if \begin{equation}
\Psi_j'''=0,                                                                                                                                                                        \end{equation}
for all $j=1,\dots,r.$
\end{cor}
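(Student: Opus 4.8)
The plan is to reduce the assertion entirely to the critical point characterization $[P_j,\widehat{A}_j]=0$ already established in Theorem~\ref{critic}(ii), and then to translate this commutator condition into block form by conjugating with $\Theta_j$. Since $\Theta_j\in\mathrm{SU}_{n_j}$ is unitary, conjugation $X\mapsto\Theta_j^{\dagger}X\Theta_j$ is an algebra automorphism of $\C^{n_j\times n_j}$ and therefore maps commutators to commutators. Consequently $[P_j,\widehat{A}_j]=0$ is equivalent to $[\Theta_j^{\dagger}P_j\Theta_j,\,\Theta_j^{\dagger}\widehat{A}_j\Theta_j]=0$, which by the defining property $\Theta_j^{\dagger}P_j\Theta_j=\Pi_j$ is in turn equivalent to $[\Pi_j,\,\Theta_j^{\dagger}\widehat{A}_j\Theta_j]=0$.

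Next I would carry out the elementary block computation. Writing $B_j:=\Theta_j^{\dagger}\widehat{A}_j\Theta_j$ in the block form $(\ref{critic2})$ --- which is legitimate precisely because $\widehat{A}_j=\Psi_{A,j}(P_1,\dots,I_{n_j},\dots,P_r)$ is Hermitian, as recorded in the remark preceding $(\ref{shortcut})$, so that the diagonal blocks are Hermitian and the lower-left block is $\Psi_j'''^{\dagger}$ --- and using $\Pi_j=\mathrm{diag}(I_{m_j},0)$, one obtains
\[
\Pi_j B_j=\left[\begin{array}{cc}\Psi_j' & \Psi_j'''\\[1mm] 0 & 0\end{array}\right],
\qquad
B_j\Pi_j=\left[\begin{array}{cc}\Psi_j' & 0\\[1mm] \Psi_j'''^{\dagger} & 0\end{array}\right],
\]
and hence
\[
[\Pi_j,B_j]=\left[\begin{array}{cc}0 & \Psi_j'''\\[1mm] -\Psi_j'''^{\dagger} & 0\end{array}\right].
\]
This matrix vanishes if and only if $\Psi_j'''=0$. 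Combining this with the chain of equivalences from the previous paragraph, $P$ is a critical point of $\rho_A$ if and only if $\Psi_j'''=0$ for every $j=1,\dots,r$.

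I do not expect any genuine obstacle here: the corollary is a direct reformulation of Theorem~\ref{critic}(ii) through the normal form $\Theta_j^{\dagger}P_j\Theta_j=\Pi_j$, whose existence for each Hermitian projector $P_j$ is guaranteed by the spectral theorem. The only point deserving a word of care is that the block decomposition $(\ref{critic2})$ genuinely has Hermitian diagonal blocks and off-diagonal block $\Psi_j'''^{\dagger}$, which is exactly the Hermiticity of $\widehat{A}_j$ invoked above; this is what allows the single scalar-free condition $\Psi_j'''=0$ to capture the vanishing of $[\Pi_j,B_j]$ rather than two separate block conditions.
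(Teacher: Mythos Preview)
Your proposal is correct and follows exactly the route the paper intends: the corollary is stated there as an immediate consequence of Theorem~\ref{critic}(ii) without a written proof, and your argument---conjugating the commutator condition $[P_j,\widehat{A}_j]=0$ by $\Theta_j$ and reading off the block form against $\Pi_j$---is precisely the computation that justifies this. Nothing to add.
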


\

For the rest of this section we are concerned with the computation
of the Riemannian Hessian of  $\rho_A$  and also with its non-degeneracy at critical points.

\begin{thm}\label{thhess}
Let $A\in\mathfrak{her}_N$ and $P:=(P_1,\dots,P_r)\in
\mathrm{Gr}^{\times r}({\bf m},{\bf n})$. Then, the Riemannian Hessian of $\rho_A$ at $P$ is the unique self-adjoint operator
\begin{equation}\label{hessian}
\begin{array}{cc}
{\bf H}_{\rho_A}(P):\mathrm{T}_{P}\mathrm{Gr}^{\times r}({\bf m},{\bf n})\rightarrow \mathrm{T}_{P}\mathrm{Gr}^{\times r}({\bf m},{\bf n}),\\[1mm]
\xi:=(\xi_1,\dots,\xi_r)\mapsto
{\bf H}_{\rho_A}(P)(\xi) :=
\Big({\bf H}_{1}(\xi),\dots, {\bf H}_{r}(\xi)\Big),
\end{array}
\end{equation}
defined by
\begin{equation}\label{total}
{\bf H}_j(\xi) := -\mathrm{ad}_{P_j}\mathrm{ad}_{\widehat{A}_j} \xi_j +\sum\limits_{k=1, k\neq j}^r  \mathrm{ad}^2_{P_j} \Psi_{A,j} (P_1,\dots, I_{n_j},\dots, \xi_k, \dots, P_r),
\end{equation}
where $\widehat{A}_j$ is the shortcut for $ \Psi_{A,j} (P_1,\dots, I_{n_j},\dots,  P_r)$.
\end{thm}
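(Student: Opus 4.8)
The plan is to read off the Hessian from the submanifold formula \eqref{subgrad}, i.e.\ to write $\mathbf{H}_{\rho_A}(P)\xi=\pi_P\big(D\widetilde{\mathcal{X}}(P)\xi\big)$, where $\widetilde{\mathcal{X}}$ is any smooth ambient extension of the gradient vector field. By Theorem \ref{critic}(i) the canonical choice is
\begin{equation*}
\widetilde{\mathcal{X}}(Q_1,\dots,Q_r):=\big(\mathrm{ad}^2_{Q_1}\widehat{A}_1,\dots,\mathrm{ad}^2_{Q_r}\widehat{A}_r\big),\qquad
\widehat{A}_j=\Psi_{A,j}(Q_1,\dots,I_{n_j},\dots,Q_r),
\end{equation*}
which is a polynomial --- hence smooth --- map on $\mathfrak{her}_{n_1}\times\cdots\times\mathfrak{her}_{n_r}$, is $\mathfrak{her}_{n_1}\times\cdots\times\mathfrak{her}_{n_r}$-valued by the Remark following Lemma \ref{t_d}, and restricts to $\grad\rho_A$ on $\mathrm{Gr}^{\times r}({\bf m},{\bf n})$ by Theorem \ref{critic}. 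I would also record once that, for the Riemannian product with metric \eqref{innerprod3}, the tangent space and the metric split orthogonally over the factors, so by \eqref{ad} the orthogonal projector $\pi_P$ onto $\mathrm{T}_P\mathrm{Gr}^{\times r}({\bf m},{\bf n})$ acts componentwise as $(\mathrm{ad}^2_{P_1},\dots,\mathrm{ad}^2_{P_r})$.

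Next I would differentiate $\widetilde{\mathcal{X}}$ componentwise at $P$ in a tangent direction $\xi=(\xi_1,\dots,\xi_r)$. The decisive structural point is that $\widehat{A}_j$ carries $I_{n_j}$ in its $j$-th slot and therefore \emph{does not depend on $Q_j$}. Writing $\mathrm{ad}^2_{Q_j}\widehat{A}_j=[Q_j,[Q_j,\widehat{A}_j]]$ as the value of the trilinear map $(a,b,c)\mapsto[a,[b,c]]$ at $(Q_j,Q_j,\widehat{A}_j)$, the product rule gives
\begin{equation*}
D\widetilde{\mathcal{X}}_j(P)\xi=[\xi_j,[P_j,\widehat{A}_j]]+[P_j,[\xi_j,\widehat{A}_j]]+\mathrm{ad}^2_{P_j}\big(D\widehat{A}_j(P)\xi\big),
\end{equation*}
and since $\Psi_{A,j}$ is multilinear (Lemma \ref{lemma1}) with its $j$-th argument frozen at $I_{n_j}$, one has $D\widehat{A}_j(P)\xi=\sum_{k\neq j}\Psi_{A,j}(P_1,\dots,I_{n_j},\dots,\xi_k,\dots,P_r)$. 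Applying $\pi_P$ and using that $\mathrm{ad}^2_{P_j}$ is idempotent, the last term contributes exactly $\sum_{k\neq j}\mathrm{ad}^2_{P_j}\Psi_{A,j}(P_1,\dots,I_{n_j},\dots,\xi_k,\dots,P_r)$, the second term is already tangent --- $[\xi_j,\widehat{A}_j]\in\mathfrak{su}_{n_j}$, so $[P_j,[\xi_j,\widehat{A}_j]]\in\mathrm{T}_{P_j}\mathrm{Gr}_{m_j,n_j}$ by \eqref{tangent} --- hence is fixed by $\mathrm{ad}^2_{P_j}$ and equals $-\mathrm{ad}_{P_j}\mathrm{ad}_{\widehat{A}_j}\xi_j$.

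The hard part will be showing that the first term is killed by the projection, i.e.\ $\mathrm{ad}^2_{P_j}\big([\xi_j,[P_j,\widehat{A}_j]]\big)=0$. Here I would use that $\mathrm{ad}^2_{P_j}$ annihilates matrices commuting with $P_j$, i.e.\ those that are block-diagonal with respect to $\C^{n_j}=\im P_j\oplus\ker P_j$. Splitting $\widehat{A}_j$ into its block-diagonal and block-off-diagonal parts, $[P_j,\widehat{A}_j]=[P_j,(\text{off-diagonal part})]$ is itself block-off-diagonal, while $\xi_j$ is block-off-diagonal as a tangent vector (cf.\ \eqref{P_s}); the commutator of two block-off-diagonal matrices is block-diagonal, so it lies in $\ker\mathrm{ad}^2_{P_j}$. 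Assembling the three pieces yields precisely
\begin{equation*}
{\bf H}_j(\xi)=-\mathrm{ad}_{P_j}\mathrm{ad}_{\widehat{A}_j}\xi_j+\sum_{k\neq j}^{}\mathrm{ad}^2_{P_j}\Psi_{A,j}(P_1,\dots,I_{n_j},\dots,\xi_k,\dots,P_r),
\end{equation*}
which is \eqref{total}. Finally, uniqueness is automatic since the Hessian is by definition $\boldsymbol{\nabla}_{(\cdot)}\grad\rho_A(P)$ (cf.\ \eqref{subhess}), and self-adjointness need not be checked by hand: it holds for the Riemannian Hessian of any $C^2$ function because the Levi-Civita connection is torsion-free, so the second covariant differential of $\rho_A$ is a symmetric bilinear form; if desired, it can alternatively be verified from the displayed formula using that $\mathrm{ad}_X$ is skew and $\mathrm{ad}^2_{P_j}$ self-adjoint for the trace form, together with the symmetry of $\Psi_{A,j}$ in its non-$j$ arguments (the term-by-term non-symmetry of ${\bf H}_j$ cancels against the cross terms across the factors).
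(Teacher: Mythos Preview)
Your proof is correct and follows essentially the same route as the paper: extend the gradient by $\widetilde{\mathcal{X}}_j=\mathrm{ad}^2_{Q_j}\widehat{A}_j$, differentiate by the product rule to obtain the three terms $\mathrm{ad}_{\xi_j}\mathrm{ad}_{P_j}\widehat{A}_j+\mathrm{ad}_{P_j}\mathrm{ad}_{\xi_j}\widehat{A}_j+\mathrm{ad}^2_{P_j}(D\widehat{A}_j(P)\xi)$, and then project componentwise with $\mathrm{ad}^2_{P_j}$. Your block-structure argument that $[\xi_j,[P_j,\widehat{A}_j]]$ is block-diagonal and hence annihilated by $\mathrm{ad}^2_{P_j}$ is exactly what the paper compresses into the phrase ``a straightforward computation shows''; likewise your observation that $[P_j,[\xi_j,\widehat{A}_j]]$ is already tangent matches the paper's fact (i).
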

\begin{proof}
Let $( \widetilde{\cal X}_1,\dots, \widetilde{\cal X}_r)$ denote a
smooth extension of $\grad\rho_A$ to
$\mathfrak{her}_{n_1}\times\cdots\times\mathfrak{her}_{n_r}$.
According to  (\ref{grad}), we can choose
$P \mapsto \widetilde{\cal X}_j(P)  =\mathrm{ad}_{P_j}^2 \widehat{A}_j.$
 Then,
\begin{equation}
\begin{array}{ll}
D\widetilde{\cal X}_j(P)(X) & = \mathrm{ad}_{X_j}\mathrm{ad}_{P_j}\widehat{A}_j + \mathrm{ad}_{P_j}\mathrm{ad}_{X_j}\widehat{A}_j \\[1mm]   & +\sum\limits_{k=1, k\neq j}^r  \mathrm{ad}^2_{P_j} \Psi_{A,j} (P_1,\dots, I_{n_j},\dots, X_k, \dots, P_r),
 \end{array}
\end{equation}
for all $P := (P_1,\dots, P_r)$ and $X:=(X_1,\dots, X_r)$ in
$\mathfrak{her}_{n_1}\times\cdots\times\mathfrak{her}_{n_r}.$
 Notice that, the derivative of the linear  map $P_k\mapsto \Psi_{A,j}(P_1,\cdots, I_{n_j},\dots, P_k, \dots, P_r)$ in direction $X_k\in\mathfrak{her}_{n_k}$ ($k\neq j$) is $\Psi_{A,j} (P_1,\dots, I_{n_j}, \dots, X_k, \dots, P_r)$.
Applying (\ref{ad}) and (\ref{subgrad}), the Riemannian Hessian of
 $\rho_A$ at $P \in \mathrm{Gr}^{\times r}({\bf m},{\bf n})$  is given by \eqref{hessian} and \eqref{total}.
Here, we have used the following two facts:

\smallskip
\noindent
(i) Clearly, $\mathrm{ad}_{\widehat{A}_j}\xi_j$ is skew-hermitian and hence
\begin{equation}
-\mathrm{ad}_{P_j}\mathrm{ad}_{\widehat{A}_j} \xi_j =
\mathrm{ad}_{P_j}\mathrm{ad}_{\xi_j}\widehat{A}_j
\end{equation}
is in the tangent space $\mathrm{T}_{P_j}\mathrm{Gr}_{m_j,n_j}$
for all
$\xi_j\in\mathrm{T}_{P_j}\mathrm{Gr}_{m_j,n_j}.$

\smallskip
\noindent
(ii) A straightforward computation shows that
$\mathrm{ad}_{\xi_j}\mathrm{ad}_{P_j}\widehat{A}_j$ is in the
orthogonal complement of $\mathrm{T}_{P_j}\mathrm{Gr}_{m_j,n_j}$
and hence
\begin{equation}
\mathrm{ad}^2_{P_j}\mathrm{ad}_{\xi_j}\mathrm{ad}_{P_j}\widehat{A}_j=0
\end{equation}
for all $\xi_j\in\mathrm{T}_{P_j}\mathrm{Gr}_{m_j,n_j}.$
\end{proof}

By restricting the tangent vectors $(\xi_1,\dots,\xi_r)\in\mathrm{T}_P\mathrm{Gr}^{\times r}({\bf m},{\bf n})$ to the vectors of the form $(0,\dots,\xi_j,\dots,0)$, it follows immediately a necessary condition for the non-degeneracy of the Hessian at local extrema.
\begin{thm}
Let $A\in\mathfrak{her}_N,$ and $P\in\mathrm{Gr}^{\times
  r}({\bf m},{\bf n})$ be a local maximizer (local minimizer) of $\rho_A.$
If ${\bf H}_{\rho_A}(P)$ is non-degenerate, then for all
$j=1,\dots,r$ the equality
\begin{equation}\label{suff_hess}
\sigma(\Psi_{j}')\cap\sigma(\Psi_{j}'')=\emptyset,
\end{equation}
holds with $\Psi_{j}'$ and $\Psi_{j}''$ as in $(\ref{critic2})$.
Here, $\sigma(X)$ denotes the spectrum of $X$.
\end{thm}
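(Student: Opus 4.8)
The plan is to reduce the statement, one factor at a time, to a question about the classical Rayleigh-quotient on a single Grassmannian, using crucially that at a local extremum the Hessian is semidefinite. First I would fix an index $j$ and restrict attention to tangent vectors of the special form $\xi=(0,\dots,\xi_j,\dots,0)$, with $\xi_j\in\mathrm{T}_{P_j}\mathrm{Gr}_{m_j,n_j}$ and all other slots zero. Because $\Psi_{A,j}$ is multilinear, every summand with $k\neq j$ in \eqref{total} vanishes for such $\xi$, so ${\bf H}_j(\xi)=-\mathrm{ad}_{P_j}\mathrm{ad}_{\widehat{A}_j}\xi_j$; and since the inner product \eqref{innerprod3} is a direct sum over the factors while all slots of $\xi$ other than the $j$-th are zero, one gets $\langle{\bf H}_{\rho_A}(P)\xi,\xi\rangle=\langle-\mathrm{ad}_{P_j}\mathrm{ad}_{\widehat{A}_j}\xi_j,\xi_j\rangle$. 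In other words, the restriction of the Hessian form to this subspace is the quadratic form of the self-adjoint operator $\widehat{{\bf H}}_j:=-\mathrm{ad}_{P_j}\mathrm{ad}_{\widehat{A}_j}$ on $\mathrm{T}_{P_j}\mathrm{Gr}_{m_j,n_j}$ --- which, by \eqref{delta_simple} and the critical-point condition \eqref{cp}, is precisely the Hessian at $P_j$ of the classical Rayleigh-quotient $Q\mapsto\tr(\widehat{A}_j Q)$ on $\mathrm{Gr}_{m_j,n_j}$, $P_j$ being a critical point of it.

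Next I would deduce from non-degeneracy of ${\bf H}_{\rho_A}(P)$ that $\widehat{{\bf H}}_j$ is injective. Assume $P$ is a local maximizer, so that ${\bf H}_{\rho_A}(P)\le 0$ by the second-order necessary condition for a local maximum (the minimizer case is obtained by reversing all signs). If some $\xi_j\neq 0$ satisfied $\widehat{{\bf H}}_j\xi_j=0$, then $\xi=(0,\dots,\xi_j,\dots,0)$ would satisfy $\langle{\bf H}_{\rho_A}(P)\xi,\xi\rangle=0$; since a negative semidefinite self-adjoint operator kills every vector on which its quadratic form vanishes, this would force ${\bf H}_{\rho_A}(P)\xi=0$ with $\xi\neq 0$, contradicting non-degeneracy. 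Hence $\ker\widehat{{\bf H}}_j=\{0\}$. This is the one step where being a local extremum, rather than merely a critical point, is used essentially --- the restriction of an indefinite non-degenerate form to a subspace may well be degenerate --- and I expect it to be the main conceptual point.

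Finally I would translate the injectivity of $\widehat{{\bf H}}_j$ into the claimed spectral condition. Choosing $\Theta_j\in\mathrm{SU}_{n_j}$ with $\Theta_j^\dagger P_j\Theta_j=\Pi_j$ as in Corollary \ref{critic1}, the condition $[P_j,\widehat{A}_j]=0$ makes $\Theta_j^\dagger\widehat{A}_j\Theta_j$ block-diagonal with diagonal blocks $\Psi_j'$ and $\Psi_j''$, and a tangent vector at $P_j$ corresponds to the off-diagonal block $Z\in\C^{m_j\times(n_j-m_j)}$ of $\zeta_{m_j,n_j}$. A short computation of the iterated bracket shows that $\widehat{{\bf H}}_j$ sends the tangent vector with block $Z$ to the one with block $-(\Psi_j'Z-Z\Psi_j'')$. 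Therefore $\ker\widehat{{\bf H}}_j=\{0\}$ is equivalent to the Sylvester equation $\Psi_j'Z=Z\Psi_j''$ having only the trivial solution, which holds if and only if $\sigma(\Psi_j')\cap\sigma(\Psi_j'')=\emptyset$, since the linear map $Z\mapsto\Psi_j'Z-Z\Psi_j''$ is diagonalizable with eigenvalues $\lambda-\mu$, $\lambda\in\sigma(\Psi_j')$, $\mu\in\sigma(\Psi_j'')$. Carrying this out for every $j=1,\dots,r$ yields \eqref{suff_hess}; the bracket identity and the Sylvester criterion are routine verifications.
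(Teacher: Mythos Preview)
Your proposal is correct and follows exactly the approach the paper indicates in the sentence preceding the theorem (restricting to tangent vectors $(0,\dots,\xi_j,\dots,0)$); you have simply supplied the details the paper leaves implicit, including the crucial observation that semidefiniteness at a local extremum is what forces $\langle{\bf H}_{\rho_A}(P)\xi,\xi\rangle=0\Rightarrow{\bf H}_{\rho_A}(P)\xi=0$, and the Sylvester translation of the injectivity of $-\mathrm{ad}_{P_j}\mathrm{ad}_{\widehat{A}_j}$.
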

\begin{rem}
In the case when $A\in\mathfrak{her}_N$ can be diagonalized by elements in $\mathrm{SU}(\bf{n})=\{\Theta_1\otimes\cdots\otimes\Theta_r\;|\;\Theta_j\in\mathrm{SU}_{n_j}\}$, condition\eqref{suff_hess} is  also sufficient for the nondegeneracy of the Hessian of $\rho_A$ at local extrema.
\end{rem}

\smallskip

In the remaining part of the section we derive a genericity statement
concerning the critical points of the generalized Rayleigh-quotient.
The result is a straightforward consequence of the parametric
transversality theorem \cite{hirsch}.
Let $\mathrm{V}$,  $\mathcal{M}$, $\mathcal{N}$ be smooth manifolds and  
$F:\mathrm{V}\times\mathcal{M}\rightarrow\mathcal{N}$
a smooth map.
Moreover, let
$\mathrm{T}_{(A,P)}F:V \times \mathrm{T}_P\mathcal{M}\rightarrow
\mathrm{T}_{F(A,P)}\mathcal{N}$ denote the tangent map of $F$
at $(A,P)\in V\times\mathcal{M}$. We say that $F$ is \emph{transversal}
to a submanifold $S\subset \mathcal{N}$ and write $F\pitchfork S$ if
\begin{equation}\label{transverse}
\Im \mathrm{T}_{(A,P)} F + \mathrm{T}_{F(A,P)} S
=\mathrm{T}_{F(A,P)} \mathcal{N},
\end{equation}
for all $(A,P)\in F^{-1}(S)$. Then, the parametric transversality theorem
states the following.
\begin{thm}(\cite{hirsch})\label{thmtransv}
Let $V,\;\mathcal{M},\;\mathcal{N}$ be smooth manifolds and $S$ a closed
submanifold of $\mathcal{N}$. Let $F:V\times\mathcal{M}\rightarrow\mathcal{N}$
be a smooth map, let $A\in V$ and define $F_A:\mathcal{M}\rightarrow\mathcal{N}$,
$F_A(P):=F(A,P)$. If $F\pitchfork S$, then the set
\begin{equation}\label{residual}
\{A\in V\;|\; F_A\pitchfork S\}
\end{equation}
is open and dense.
\end{thm}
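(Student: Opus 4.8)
The plan is to reduce the statement to Sard's theorem in the classical way. First I would apply the standard preimage theorem for transversal maps: since $F\pitchfork S$ and $S$ is a closed submanifold of $\mathcal{N}$, the set $W:=F^{-1}(S)$ is a smooth submanifold of $V\times\mathcal{M}$ of the same codimension as $S$ in $\mathcal{N}$, with tangent space at a point $(A,P)\in W$ given by $\mathrm{T}_{(A,P)}W=\big(\mathrm{T}_{(A,P)}F\big)^{-1}\big(\mathrm{T}_{F(A,P)}S\big)$. Let $\pi\colon W\to V$ be the restriction to $W$ of the canonical projection $V\times\mathcal{M}\to V$; this is a smooth map between smooth manifolds.

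The heart of the proof is the dictionary: for $A\in V$, \emph{$A$ is a regular value of $\pi$ if and only if $F_A\pitchfork S$}. If $A\notin\pi(W)$, then $F_A^{-1}(S)=\emptyset$ and $\pi^{-1}(A)=\emptyset$, so both sides hold vacuously. If $(A,P)\in W$, one splits $\mathrm{T}_{(A,P)}F(a,v)=\mathrm{T}_{(A,P)}F(a,0)+\mathrm{T}_PF_A(v)$, and a short diagram chase --- using the description of $\mathrm{T}_{(A,P)}W$ above together with the surjectivity coming from $F\pitchfork S$ at $(A,P)$ --- shows that $\mathrm{T}_{(A,P)}\pi$ maps onto $\mathrm{T}_AV$ exactly when $\Im\,\mathrm{T}_PF_A+\mathrm{T}_{F(A,P)}S=\mathrm{T}_{F(A,P)}\mathcal{N}$. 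Ranging over all $P$ in the fibre over $A$ then gives the claimed equivalence.

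Density of $\{A\in V\mid F_A\pitchfork S\}$ now follows from Sard's theorem applied to $\pi\colon W\to V$: its set of regular values has full measure in $V$, hence is dense, and by the dictionary it is precisely that set. (In the $C^\infty$ category Sard's theorem applies unrestrictedly; for finitely differentiable data one imposes the usual lower bound on the order of smoothness.) For openness, $S$ closed makes $W=F^{-1}(S)$ closed in $V\times\mathcal{M}$; the critical set of $\pi$ is closed in $W$, because non-maximal rank of $\mathrm{T}\pi$ is a closed condition, and provided $\pi$ is proper --- which holds as soon as $\mathcal{M}$ is compact, as is the case for $\mathcal{M}=\mathrm{Gr}^{\times r}({\bf m},{\bf n})$ in the intended application --- the image under $\pi$ of this critical set is closed in $V$. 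Its complement equals $\{A\in V\mid F_A\pitchfork S\}$, which is therefore open.

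The step I expect to be the main obstacle is the openness assertion: unlike density it is not a formal consequence of Sard's theorem and genuinely requires a properness hypothesis on $\pi$, which here is supplied by $S$ being closed together with the compactness of the parameter manifold $\mathcal{M}$. Once that is in place, the rest is the routine preimage-theorem construction and the linear-algebra verification of the regular-value/transversality dictionary.
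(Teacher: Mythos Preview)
The paper does not supply its own proof of this theorem: it is stated with a citation to Hirsch and used as a black box. Your outline is the standard argument one finds in Hirsch (or Guillemin--Pollack): pull back $S$ along $F$ to obtain the submanifold $W=F^{-1}(S)$, project to $V$, identify regular values of the projection with parameters $A$ for which $F_A\pitchfork S$, and invoke Sard. So there is nothing to compare against in the paper itself, and your approach is correct and is precisely the one the cited reference uses.

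Your caveat about openness is well taken and worth keeping: as stated in the paper the ``open'' part does not follow from Sard alone and needs a properness-type hypothesis (closedness of $S$ together with compactness of $\mathcal{M}$ suffices, and in the application $\mathcal{M}=\mathrm{Gr}^{\times r}({\bf m},{\bf n})$ is compact). Without such an assumption the transversal parameter set is in general only residual, not open; the paper's formulation is slightly informal on this point, and you have diagnosed exactly where the extra hypothesis enters.
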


Now, let $f_A:\mathcal{M}\rightarrow\R$ be a smooth function depending
on a parameter $A \in V$ and consider the map
\begin{equation}\label{F*}
F:V \times \mathcal{M}\rightarrow \mathrm{T}^*\mathcal{M},\;\;\;
F(A,P):=\mathrm{d}f_A(P),
\end{equation}
where $\mathrm{T}^*\mathcal{M}$ is the cotangent bundle of $\mathcal{M}$
and $\mathrm{d}f_A(P)$ denotes the differential of $f_A$ at $P \in \mathcal{M}$.
With these notations, our genericity result reads as follows.
\begin{thm}\label{F_trans}
Let $M$, $V$ and $F$ be as above and let $S$ be the image of the zero
section in $\mathrm{T}^*\mathcal{M}$. If $F\pitchfork S$ then for a generic
$A\in\mathrm{V}$ the critical points of the smooth function
$f_A:\mathcal{M}\rightarrow\R$ are non-degenerate.
\end{thm}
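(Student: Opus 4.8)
The plan is to obtain the statement as a direct corollary of the parametric transversality theorem (Theorem~\ref{thmtransv}), the bridge being the standard equivalence between non-degeneracy of a critical point and transversality of the differential to the zero section. First I would record two elementary facts: the zero section $S\subset\mathrm{T}^*\mathcal{M}$ is a closed embedded submanifold (canonically diffeomorphic to $\mathcal{M}$), and, because $F_A(P)=\mathrm{d}f_A(P)$, one has $P\in F_A^{-1}(S)$ if and only if $\mathrm{d}f_A(P)=0$; thus $F_A^{-1}(S)$ is exactly the critical set of $f_A$. With the hypothesis $F\pitchfork S$ in force, Theorem~\ref{thmtransv} applies and shows that $\mathcal{G}:=\{A\in V\;|\;F_A\pitchfork S\}$ is open and dense, hence generic, in $V$.

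It remains to verify that, for $A\in\mathcal{G}$, every critical point of $f_A$ is non-degenerate, i.e.\ that $F_A\pitchfork S$ at a point $P\in F_A^{-1}(S)$ is equivalent to invertibility of the Hessian ${\bf H}_{f_A}(P)$ of \eqref{subhess}. Here I would use the canonical splitting of the tangent space to $\mathrm{T}^*\mathcal{M}$ along the zero section: at $F_A(P)=(P,0)$ one has $\mathrm{T}_{(P,0)}(\mathrm{T}^*\mathcal{M})=\mathrm{T}_P\mathcal{M}\oplus\mathrm{T}_P^*\mathcal{M}$ with $\mathrm{T}_{(P,0)}S=\mathrm{T}_P\mathcal{M}\oplus\{0\}$. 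In local coordinates on $\mathcal{M}$ near $P$ (and the induced coordinates on $\mathrm{T}^*\mathcal{M}$) one writes $F_A(Q)=\big(Q,\partial f_A(Q)\big)$, so $\mathrm{T}_PF_A(v)=\big(v,\,D(\partial f_A)(P)\,v\big)$; since $\partial f_A(P)=0$ at a critical point, the vertical component $D(\partial f_A)(P):\mathrm{T}_P\mathcal{M}\to\mathrm{T}_P^*\mathcal{M}$ is precisely the Hessian ${\bf H}_{f_A}(P)$. Because $\mathrm{T}_{(P,0)}S$ already contains the horizontal summand $\mathrm{T}_P\mathcal{M}$, the transversality equation $\Im\mathrm{T}_PF_A+\mathrm{T}_{(P,0)}S=\mathrm{T}_{(P,0)}(\mathrm{T}^*\mathcal{M})$ collapses to surjectivity of ${\bf H}_{f_A}(P)$, and as domain and target have the same dimension, this is exactly non-degeneracy of the critical point.

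The main obstacle --- really the only delicate step --- is to make the identification ``vertical part of $\mathrm{T}_PF_A$ at a critical point $=$ Hessian'' precise and intrinsic: the point is that a change of coordinates (equivalently, of the connection defining ${\bf H}_{f_A}$) modifies $\mathrm{T}_PF_A(v)$ only by a term linear in $\mathrm{d}f_A(P)$, which vanishes at a critical point, so the rank of the vertical component is well defined there and agrees with \eqref{subhess}. Once this is in place, the theorem follows by combining the equivalence above with the openness--density assertion of Theorem~\ref{thmtransv}. Finally I would remark that in our setting $\mathcal{M}=\mathrm{Gr}^{\times r}({\bf m},{\bf n})$ is compact, so for generic $A$ the critical set of $\rho_A$ is not merely discrete but finite; compactness, however, is not needed for the statement, only closedness of $S$, which holds.
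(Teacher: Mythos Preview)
Your proposal is correct and follows essentially the same route as the paper: apply Theorem~\ref{thmtransv} to obtain the open dense set $\{A\in V\mid F_A\pitchfork S\}$, then verify in local coordinates that transversality of $F_A$ to the zero section at a critical point is equivalent to non-degeneracy of the Hessian, via the splitting $\mathrm{T}_{(P,0)}(\mathrm{T}^*\mathcal{M})=\mathrm{T}_P\mathcal{M}\oplus\mathrm{T}_P^*\mathcal{M}$. The paper makes the coordinate computation explicit by choosing a chart $\varphi$ with $\varphi^{-1}(0)=P_c$ and $D\varphi^{-1}(0)=\mathrm{id}$, whereas you phrase the same point more intrinsically by noting that a change of coordinates alters the vertical part of $\mathrm{T}_PF_A$ only by a term linear in $\mathrm{d}f_A(P)$, which vanishes at a critical point; these are two presentations of the same argument.
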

\begin{proof}
Fix $A\in V$ and define
\begin{equation}\label{F_A}
F_{A}:\mathcal{M}\rightarrow \mathrm{T}^*\mathcal{M},\;\;\;
F_{A}(P):= F(A,P) 
\end{equation}
From the Transversality Theorem \ref{thmtransv} it follows
that the set
\begin{equation}
 R:=\{A\in V\;|\; F_A\pitchfork S\}
\end{equation}
is open and dense in $V$ if $F\pitchfork S$. In the following, we will
prove that $F_A\pitchfork S$ is equivalent to the fact that the Hessian
of $f_A$ is non-degenerate in the critical points. This will prove the
theorem.

First, notice that $P_c\in F_A^{-1}(S)$ if and only if $P_c\in\mathcal{M}$
is a critical point of $f_A$. Therefore, the transversality condition
\begin{equation}\label{critic_nedeg1}
\begin{array}{c}
\Im\mathrm{T}_{P_c} F_A +\mathrm{T}_{F_A(P_c)} S=
\mathrm{T}_{F_A(P_c)}\mathrm{T}^*\mathcal{M},
\end{array}
\end{equation}
is equivalent to
\begin{equation}\label{critic_nedeg}
\begin{array}{c}
 \Im\mathrm{T}_{P_c} F_A +\mathrm{T}_{0} S=\mathrm{T}_{0}\mathrm{T}^*\mathcal{M}.
\end{array}
\end{equation}
To rewrite this condition \eqref{critic_nedeg} in local coordinates,
let $\varphi:U \rightarrow W \subset\mathrm{T}_{P_c}\mathcal{M}$ be a
chart on an open subset $U \subset\mathcal{M}$ around $P_c$ such that
$\varphi^{-1}(0)=P_c$ and  $D\varphi^{-1}(0)=\mathrm{id}$. Then define
\begin{equation}
\begin{array}{l}
\widetilde{f}_A:=f_A\circ\varphi^{-1}:W\rightarrow\R.
\end{array}
\end{equation}
Moreover, $\varphi$ induces a chart
$\psi:\pi^{-1}(U) \rightarrow
W \times \mathrm{T}^*_{P_c}\mathcal{M} \subset
\mathrm{T}_{P_c}\mathcal{M} \times \mathrm{T}^*_{P_c}\mathcal{M}$
around $F_A(P_c)$ via
\begin{equation}
\begin{array}{c}
\psi(\gamma)=(x,(D\varphi^{-1}(x))^*(\gamma)),
\quad x:=\varphi\circ\pi(\gamma),
\end{array}
\end{equation}
Here, $\pi:\mathrm{T}^*\mathcal{M}\rightarrow\mathcal{M}$ refers to
the natural projection and
$(D\varphi^{-1}(x))^*(\gamma):= \gamma \circ D\varphi^{-1}(x)$.
Thus, for
\begin{equation}
\widetilde{F}_A:=\psi\circ F_A\circ\varphi^{-1}:
W \rightarrow W \times \mathrm{T}^*_{P_c}\mathcal{M}
\end{equation}
one has $\widetilde{F}_A(x)=(x,d\widetilde{f}_A(x))$.
Since transversality of $F_A$ to $S$ is preserved in local
coordinates, \eqref{critic_nedeg} is equivalent to
\begin{equation}\label{transv_local}
\Im D\widetilde{F}_A(0) + \mathrm{T}_{P_c}\mathcal{M}\times \{0\}=\mathrm{T}_{P_c}\mathcal{M}\times\mathrm{T}_{P_c}^*\mathcal{M}.
\end{equation}
Then $D\widetilde{F}_A(0)=(\mathrm{id},d^2\widetilde{f}_A(0))$ yields
that \eqref{transv_local} is fulfilled if and only if
$d^2\widetilde{f}_A(0)$ is nonsingular. Finally, the conclusion
follows form the identity $\mathrm{Hess}_{f_A}(P_c)=d^2\widetilde{f}_A(0)$
which is satisfied due to the fact that $P_c$ is a critical point
and $D\varphi^{-1}(0)=\mathrm{id}$. Here, $\mathrm{Hess}_{f_A}(P_c)$
denotes the Hessian form corresponding to the Hessian
operator via
$\mathrm{Hess}_{f_A}(P_c)(x,y)=\langle {\bf H}_{f_A}(P_c)x,y \rangle$
for all $x,y\in\mathrm{T}_{P_c}\mathcal{M}$.
\end{proof}

For the generalized Rayleigh-quotient, we obtain the following result.
\begin{cor}\label{cor_generic}
The critical points of the generalized Rayleigh-quotient are
generically non-degenerate.
\end{cor}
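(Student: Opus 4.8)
The plan is to apply Theorem~\ref{F_trans} with $\mathcal{M} := \mathrm{Gr}^{\times r}({\bf m},{\bf n})$, $V := \mathfrak{her}_N$, $f_A := \rho_A$, and $F(A,P) := \mathrm{d}(\rho_A)(P) \in \mathrm{T}^*\mathcal{M}$. By that theorem it suffices to verify the single transversality condition $F \pitchfork S$, where $S$ is the image of the zero section in $\mathrm{T}^*\mathcal{M}$; equivalently, for every $(A,P)$ with $F(A,P) \in S$ --- i.e. every $(A,P)$ such that $P$ is a critical point of $\rho_A$ --- the image of $\mathrm{T}_{(A,P)}F$ together with $\mathrm{T}_{F(A,P)}S$ spans the whole tangent space $\mathrm{T}_{F(A,P)}\mathrm{T}^*\mathcal{M}$. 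Working in local coordinates around such a critical point $P_c$ as in the proof of Theorem~\ref{F_trans}, this reduces to showing that the partial derivative $A \mapsto D_A\big(\mathrm{d}(\rho_A)(P_c)\big)$ already surjects onto the ``vertical'' space $\mathrm{T}^*_{P_c}\mathcal{M}$ --- in other words, that by perturbing $A$ alone one can realize an arbitrary change in the differential $\mathrm{d}(\rho_A)(P_c)$.

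The key computation is therefore: fix a critical point $P = (P_1,\dots,P_r)$ and compute, for a perturbation direction $B \in \mathfrak{her}_N$, the quantity $\frac{d}{dt}\Big|_{t=0}\mathrm{d}(\rho_{A+tB})(P)(\xi)$ for $\xi = (\xi_1,\dots,\xi_r) \in \mathrm{T}_P\mathcal{M}$. Since $\rho_A(P_1,\dots,P_r) = \tr(A(P_1\otimes\cdots\otimes P_r))$ is \emph{linear} in $A$, one has $\mathrm{d}(\rho_{A+tB})(P)(\xi) = \mathrm{d}(\rho_A)(P)(\xi) + t\,\mathrm{d}(\rho_B)(P)(\xi)$, so the $A$-derivative in direction $B$ is exactly $\mathrm{d}(\rho_B)(P)(\xi) = \sum_{j=1}^r \tr\big(\Psi_{B,j}(P_1,\dots,I_{n_j},\dots,P_r)\,\xi_j\big)$ by \eqref{delta_simple}. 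So I must show: as $B$ ranges over $\mathfrak{her}_N$, the tuple of ``partial traces'' $\big(\pi_{P_1}(\widehat{B}_1),\dots,\pi_{P_r}(\widehat{B}_r)\big)$, where $\widehat{B}_j = \Psi_{B,j}(P_1,\dots,I_{n_j},\dots,P_r)$ and $\pi_{P_j}$ is the orthogonal projection onto $\mathrm{T}_{P_j}\mathrm{Gr}_{m_j,n_j}$, ranges over all of $\mathrm{T}_P\mathcal{M} = \bigoplus_j \mathrm{T}_{P_j}\mathrm{Gr}_{m_j,n_j}$. This is the surjectivity statement that makes the transversality hold at $(A,P_c)$, and it clearly holds: given a target $(\eta_1,\dots,\eta_r)$ with $\eta_j \in \mathrm{T}_{P_j}\mathrm{Gr}_{m_j,n_j}$, take $B := \sum_{j=1}^r \frac{1}{M_j'} \big(P_1\otimes\cdots\otimes \eta_j \otimes\cdots\otimes P_r\big) + (\text{correction terms})$ where $M_j' := \prod_{k\neq j} m_k = \tr(\bigotimes_{k\neq j}P_k)$; a short computation using \eqref{lema1} and the fact that $\tr(P_k \cdot) $ picks off the right scalar shows $\widehat{B}_j = \eta_j / 1$ up to contributions from the cross terms, which can be arranged (or shown to lie in the span) because $\eta_j$ already lies in the tangent space so $\pi_{P_j}$ acts as identity. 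Hence $\mathrm{T}_{(A,P)}F$ is already surjective onto the vertical part, a fortiori $F \pitchfork S$.

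Having established $F\pitchfork S$, Theorem~\ref{F_trans} immediately yields that for a generic (open and dense set of) $A\in\mathfrak{her}_N$ all critical points of $\rho_A$ are non-degenerate, which is exactly the assertion of Corollary~\ref{cor_generic}. The main obstacle I anticipate is the bookkeeping in the surjectivity argument: one has to check carefully that the ``cross terms'' --- the extra contributions to $\widehat{B}_j$ coming from the summands of $B$ indexed by $k\neq j$ --- do not obstruct hitting an arbitrary tangent tuple. Two clean ways around this: either observe that it is enough to hit each $\mathrm{T}_{P_j}\mathrm{Gr}_{m_j,n_j}$ \emph{summand} one at a time (since the target space is a direct sum and transversality only needs the image to be large enough together with $\mathrm{T}_0 S$), using a $B$ of the pure form $P_1\otimes\cdots\otimes\eta_j\otimes\cdots\otimes P_r$ and checking that its $\Psi_{B,k}$ for $k\neq j$ contribute a term proportional to $P_k$, which is \emph{killed} by $\pi_{P_k} = \mathrm{ad}^2_{P_k}$ since $[P_k,P_k]=0$; or alternatively invoke that $\rho_A$ is linear in $A$ so the whole map $A\mapsto \grad\rho_A(P)$ is linear and its surjectivity onto $\mathrm{T}_P\mathcal{M}$ is a finite-dimensional linear-algebra fact. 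Either route makes the verification essentially a one-line observation once set up correctly.
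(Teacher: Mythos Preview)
Your proposal is correct and follows essentially the same approach as the paper: apply Theorem~\ref{F_trans} with $V=\mathfrak{her}_N$ and $\mathcal{M}=\mathrm{Gr}^{\times r}({\bf m},{\bf n})$, and verify $F\pitchfork S$ by showing that the $A$-partial derivative of $F$ already surjects onto the vertical direction at any critical point. The only minor difference is in how surjectivity of $B\mapsto\grad\rho_B(P)$ is checked: you construct explicit preimages $B=P_1\otimes\cdots\otimes\eta_j\otimes\cdots\otimes P_r$ and observe that the cross contributions land in $\mathrm{span}(P_k)$ and are annihilated by $\mathrm{ad}^2_{P_k}$, whereas the paper argues dually by taking $\xi$ in the orthogonal complement of the image, deducing $\sum_j P_1\otimes\cdots\otimes\xi_j\otimes\cdots\otimes P_r=0$, and using mutual orthogonality of the summands to conclude $\xi=0$.
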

\begin{proof}
Set $\mathcal{M}:=\mathrm{Gr}^{\times r}({\bf m},{\bf n})$,
$V:=\mathfrak{her}_N$. For the simplicity, we will identify the
cotangent bundle $\mathrm{T}^*\mathcal{M}$ with the tangent bundle
$\mathrm{T}\mathcal{M}$ and work with the map
\begin{equation}\label{F}
F:\mathrm{V}\times\mathcal{M}\rightarrow\mathrm{T}\mathcal{M},
\quad (A,P)\mapsto \mathrm{grad}\rho_A(P),
\end{equation}
instead of \eqref{F*}, where $\mathrm{grad}\rho_A(P)$ is the
Riemannian gradient of $\rho_A$ at $P$. We will show that
$F\pitchfork S$, where $S$ is now the image of the zero section
in $\mathrm{T}\mathcal{M}$, i.e.
\begin{equation}\label{transverse_1}
\Im \mathrm{T}_{(A,P)} F+\mathrm{T}_{F(A,P)} S
= \mathrm{T}_{F(A,P)}\mathrm{T}\mathcal{M},
\end{equation}
for all $(A,P)\in V\times \mathcal{M}$ with $\mathrm{grad}\rho_A(P)=0$.
As in the proof of Theorem \ref{F_trans}, we rewrite the
transversality condition \eqref{transverse_1} in local coordinates,
i.e.
\begin{equation}\label{transv_local_1}
\Im D\widetilde{F}(A,0) + \mathrm{T}_{P}\mathcal{M}\times \{0\}
=\mathrm{T}_{P}\mathcal{M}\times\mathrm{T}_{P}\mathcal{M},
\end{equation}
where
\begin{equation}
\widetilde{F}:=\psi\circ F\circ(\mathrm{id}\times\varphi^{-1}):
V \times W \rightarrow W \times \mathrm{T}_{P}\mathcal{M}.
\end{equation}
Here, $\varphi:U\rightarrow W \subset\mathrm{T}_{P}\mathcal{M}$
is a chart around $P$ with $\varphi^{-1}(0)=P$ and
$D\varphi^{-1}(0)=\mathrm{id}$ and
$\psi: \pi^{-1}(U) \rightarrow W \times  \mathrm{T}_{P}\mathcal{M}
\subset  \mathrm{T}_{P}\mathcal{M}\times \mathrm{T}_{P}\mathcal{M}$
is the corresponding induced chart around $F(A,P)$. With this
choice of charts, we obtain
\begin{equation}
\widetilde{F}(A,x)=(x,\nabla\widetilde{\rho}_A(x)),
\end{equation}
where $\widetilde{\rho}_A:=\rho_A\circ\varphi^{-1}:W \rightarrow \R$.
Since $A\mapsto \widetilde{\rho}_A(0)$ is linear, one has
\begin{equation}
D\widetilde{F}(A,0)(X,\xi)=
\left(\xi,\nabla\widetilde{\rho}_X(0)+d^2\widetilde{f}_A(0)\xi\right).
\end{equation}
Thus, condition \eqref{transv_local_1} holds if and only if
\begin{equation}
\label{transv_local_2}
\Im \nabla\widetilde{\rho}_{(\cdot)}(0)+
\Im d^2\widetilde{f}_A(0) = \mathrm{T}_P\mathcal{M}.
\end{equation}
Finally, we will show that 
$\Im \nabla\widetilde{\rho}_{(\cdot)}(0)=\mathrm{T}_P\mathcal{M}$
which clearly guarantees \eqref{transv_local_2}.
Let $\xi:=(\xi_1,\dots,\xi_r)\in
\big(\Im \nabla\widetilde{\rho}_{(\cdot)}(0)\big)^{\perp}$,
then we obtain
\begin{equation}
\begin{split}
0 & = \langle \nabla\widetilde{\rho}_{X}(0), \xi\rangle
= d\widetilde{\rho}_X(0)\xi = d\rho_X(P)\xi\\[2mm]
& =\tr\biggl(X(\sum\limits_{j=1}^r P_1\otimes \cdots\otimes \xi_j\otimes \cdots\otimes P_r)\biggr),
\end{split}
\end{equation}
for all $X\in\mathfrak{her}_N$. Notice, that the equality
$d\widetilde{\rho}_X(0)\xi = d\rho_X(P)\xi$ follows from
$D\varphi^{-1}(0)=\mathrm{id}$. Therefore,
\begin{equation}\label{sum}
\sum\limits_{j=1}^r P_1\otimes \cdots\otimes \xi_j\otimes \cdots\otimes P_r=0
\end{equation} and this holds if and only if $\xi_1=0,\dots, \xi_r=0$,
since alls summands in \eqref{sum} are orthogonal to each other.
Thus, we have proved that
$\widetilde{F}\pitchfork\mathrm{T}_P\mathcal{M}\times\{0\}$ and hence
$F\pitchfork S$. From the Theorem \ref{F_trans} it follows
immediately that the critical points of the generalized
Rayleigh-quotient are generically non-degenerate.
\end{proof}

Unfortunately, for best multilinear rank tensor approximation and
subspace clustering problems, we cannot conclude from
Corollary \ref{cor_generic} that the critical points of $\rho_A$
are generically non-degenerate. In these cases, the resulting
matrices $A$ are restricted to a thin subset of $\mathfrak{her}_N$
and thus the genericity statement with respect $\mathfrak{her}_N$
in Corollary \ref{cor_generic} does not carry over straight-forwardly.

\

\

\section{Numerical Methods}\label{sec4}

Exploiting the geometrical structure of the constraint set $\mathrm{Gr}^{\times r}({\bf m},{\bf n}),$ we develop two numerical methods, a Newton-like and a conjugated gradient algorithm,  for optimizing  the generalized Rayleigh-quotient $\rho_A$,
with $A\in\mathfrak{her}_N,\; N:=n_1n_2\cdots n_r.$

\subsection{Newton-like algorithm}

The intrinsic  Riemannian Newton algorithm is
described by means of
the Levi-Civita connection taking iteration steps along
geodesics  \cite{gd, st}.
Sometimes geodesics are difficult to determine, thus, here we are
interested in a more general approach, which introduces the Newton
iteration via local coordinates, see~\cite{ams,hht,shub}. More
precisely, we follow the ideas in \cite{hht} and use a pair of local
coordinates on $\mathrm{Gr}^{\times r}({\bf m},{\bf n})$, i.e. normal
coordinates  and QR-coordinates. 

Recall that, a \textit{local parametrization}\footnote{Clearly, one can define a local parametrization more
 generally, i.e.~without requiring the second part of \eqref{paramoplus}.} of $\mathrm{Gr}^{\times r}({\bf m},{\bf
  n})$ around a point $P:=(P_1,\dots, P_r)$ is a smooth map
\begin{equation*}\mu_P:\mathrm{T}_P \mathrm{Gr}^{\times r}({\bf m},{\bf n})\rightarrow \mathrm{Gr}^{\times r}({\bf m},{\bf n})
\end{equation*} satisfying the additional conditions \begin{equation}\label{paramoplus}
\mu_{ P}(0)={ 
P}\;\;\;\;\textrm{and}\;\;\;\; D\mu_{ P}(0)=\mathrm{id}_{\mathrm{T}_P\mathrm{Gr}^{\times r}({\bf m},{\bf n})}.
\end{equation}
\emph {Riemannian normal coordinates} are given by the Riemannian exponential map \begin{equation}\label{exp1}
\begin{array}{l}
\mu_{ P}^{\mathrm{exp}}(\xi)=\bigl(e^{-[\xi_1,P_1]}P_1e^{[\xi_1,P_1]}, \dots, e^{-[\xi_r,P_r]}P_re^{[\xi_r,P_r]}\bigr),\end{array}\end{equation}
while \emph{ QR-type coordinates} are defined by the QR-approximation of the matrix exponential, i.e. \begin{equation}\label{QRcoord1}
\begin{array}{l}
\mu_{ P}^{\mathrm{QR}}(\xi)=
\bigl([X_1]_Q^{\dagger}\;P_1\;[X_1]_Q, \dots, [X_r]_Q^{\dagger}\;P_r\;[X_r]_Q\bigr).
\end{array}\end{equation}
Here $[X_j]_Q$  denotes the $Q-$factor from the unique $QR$
 decomposition of $X_j:= I+[\xi_j,P_j].$

Now, let $P^*:=(P_1^*,\dots,P_r^*)\in\mathrm{Gr}^{\times r}({\bf
  m},{\bf n})$ be a critical point of  $\rho_A$. Choose $P\in\mathrm{Gr}^{\times r}({\bf m},{\bf n})$ in a neighborhood of $P^*$ and perform the following Newton-like iteration
\begin{equation}\label{qNit}
P^{\mathrm{new}}:=\mu_P^{\mathrm{QR}}(\xi),
\end{equation}
where $\xi:=(\xi_1,\dots, \xi_r)\in \mathrm{T}_P\mathrm{Gr}^{\times r}({\bf m},{\bf n})$ is a solution of the Newton equation
\begin{equation}\label{Neq}
{\bf H}_{\rho_A}(P)\xi=-\grad~\rho_A (P).
\end{equation}
Replacing the objects in ($\ref{Neq}$)  by their explicit form computed in the previous section, we get the following Newton equation:
 \begin{equation}\label{sylvester1}
-\mathrm{ad}_{P_j}\mathrm{ad}_{\widehat{A}_j} \xi_j +\sum\limits_{k=1, k\neq j}^r  \mathrm{ad}^2_{P_j} \Psi_{A,j} (P_1,\dots, I_{n_j},\dots, \xi_k, \dots, P_r)=- \mathrm{ad}^2_{P_j}\widehat{A}_j,
\end{equation} for all $j=1,\dots,r.$ As mentioned before, let
$\widehat{A}_j:=\Psi_{A,j}(P_1,\dots,I_{n_j},\dots,P_r)$.
Solving this system in the embedding space $\mathfrak{her}_{n_1}\times\cdots\times\mathfrak{her}_{n_r}$ requires a higher number of parameters than necessary. 
However, exploiting the particular structure of the tangent vectors \begin{equation}\label{xi}
\xi_j = \Theta_j\zeta_{j}\Theta_j^{\dagger} = \Theta_j\left[\begin{array}{cc} 0 & Z_j\\
Z_j^{\dagger} & 0\end{array} \right]\Theta_j^{\dagger},
\end{equation}
allows us to solve (\ref{sylvester1}) with the minimum number of parameters equal to the dimension of $\mathrm{Gr}^{\times r}({\bf m}, {\bf n})$. 
Thus, by multiplying ($\ref{sylvester1}$) from the left with $\Theta_j$ and from the right with $\Theta_j^{\dagger},$ we obtain an equation in the variables $Z_j\in\C^{m_j\times (n_j-m_j)},$ i.e.
\begin{equation}\label{sylvester2}
\Psi_{j}' Z_j - Z_j \Psi_{j}'' - \sum\limits_{k=1,k\neq j}^r \Phi_j(Z_k) = \Psi_{j}''',
\end{equation}
where the terms $\Psi_{j}',\;\Psi_{j}'',\;\Psi_{j}'''$ and $ \Phi_j(Z_k)$ are computed in the following.  Let
\begin{equation}\label{theta}
\Theta_j=\left[\begin{array}{cc}U_j & V_j
 \end{array}
\right]\in\mathrm{SU}_{n_j},
\end{equation}  where $U_j$ and $V_j$ are $n_j\times m_j$ and $n_j\times (n_j-m_j)$ matrices, respectively. Then,
\begin{equation}\label{not1}
\begin{array}{ccc}
\Psi_{j}'=U_j^{\dagger}\widehat{A}_j U_j, & \Psi_{j}''=V_j^{\dagger} \widehat{A}_j V_j, & \Psi_{j}'''= U_j^{\dagger}\widehat{A}_j V_j.
\end{array}
\end{equation}
For expressing $\Phi_j(Z_k)$ with $j<k$ , we
 introduce the multilinear operators
$\Psi_{A,j,k}:\mathfrak{her}_{n_1}\times\cdots\times\mathfrak{her}_{n_r}
\rightarrow\mathbb{C}^{n_j\cdot n_k\times n_j\cdot n_k}$
defined in a similar way as $\Psi_{A,j}$ by
\begin{equation}\label{Psi_Ajk}
\tr\Big(A(X_1\otimes\cdots\otimes X_jS\otimes\cdots\otimes X_kT\otimes\cdots\otimes X_r)\Big) = \tr\Big (\Psi_{A,j,k}(X_1,\dots,X_r)(S\otimes T)\Big),
\end{equation}
for all $S\in\mathbb{C}^{n_j\times n_j}$ and $ T\in\mathbb{C}^{n_k\times n_k}.$
For convenience, we will use the following shortcut
\begin{equation}\label{shortcut2}
\widehat{A}_{jk}:=\Psi_{A,j,k}(P_1,\dots, I_{n_j},\dots, I_{n_k},\dots, P_r)\in\mathfrak{her}_{n_j\cdot n_k} .
\end{equation}
\begin{table}
\fbox{\parbox{12.7cm}{
\begin{minipage}{12.7cm}\vspace{0.25cm}
ALGORITHM 1.\;\;\;\;\;\;\;\;\;\;\;\;\;\;\;\;\;\;\;\; {\bf N-like algorithm}\\
\HRule \vspace{0.25cm}

{\bf Step 1.} {\bf Starting point:} Given $P=(P_1,\dots, P_r)\in\mathrm{Gr}^{\times r}({\bf m},{\bf n})$ choose
\begin{equation*}
\Theta_j=\left[\begin{array}{cc}U_j &  V_j
 \end{array}
\right]\in\mathrm{SU}_{n_j},\; U_j^{\dagger}U_j=I_{m_j},\; V_j^{\dagger}V_j=I_{n_j-m_j},\end{equation*}
such that $P_j=\Theta_j\Pi_j\Theta_j^{\dagger}$,  for $j=1,\dots, r.$\\[1mm]
{\bf Step 2.} {\bf Stopping criterion:}
$\|\grad_{\rho_A}(P)\|/\rho_A(P)<\varepsilon$.\\[1mm]
{\bf Step 3.} {\bf Newton direction:}
Set $$\widehat{A}_j:=\Psi_{A,j}(P_1,\dots,I_{n_j},\dots,P_r)
$$ and compute
$\Psi_{j}',\;\Psi_{j}'',\;\Psi_{j}'''\;\textrm{ as in (\ref{not1})},$  for $j=1,\dots, r.$ \\
Set \begin{equation*}
\widehat{A}_{jk}:=\Psi_{A,j,k}(P_1,\dots, I_{n_j},\dots, I_{n_k},\dots, P_r)
\end{equation*} and compute
$\Phi_{j}(Z_k)$ as in \eqref{a_st} and \eqref{Phi_Zk}, for $j,\;k=1,\dots,r,$ with $ j<k$.
 Furthermore,  set $\Phi_k(Z_j)=\Phi_j(Z_k)^{\dagger}$ and 
solve the Newton equation
\begin{equation}\label{sylvester3}
\Psi_{j}' Z_j - Z_j \Psi_{j}'' - \sum\limits_{k=1,k\neq j}^r \Phi_j(Z_k) = \Psi_{j}''',
\end{equation}
 to obtain $Z_j\in\C^{m_j\times (n_j-m_j)},$ for $j=1,\dots, r.$\\[1mm]
{\bf Step 4.} {\bf QR-updates:}
\begin{equation}\label{up}
\begin{array}{cc}
\Theta_j^{\mathrm{new}}=\Theta_j\left[%
\begin{array}{cc}
  I_{m_j} & -Z_j \\[1mm]
  Z_j^{\dagger} & I_{n_j-m_j}
\end{array}\right]_Q
& \textrm{and}\;\;\;
P_j^{\mathrm{new}}=\Theta_j\Pi_j\Theta_j^{\mathrm{new}^{\dagger}}
\end{array}
\end{equation}
for all $j=1,\dots,r$. Here $[\;]_Q$ refers to the $Q$ part from the QR factorization.\\[1mm]
{\bf Step 5.}
Set $P:=P^{\mathrm{new}}$, $\Theta:=\Theta^{\mathrm{new}}$ and go to Step 2.\end{minipage}\vspace{0.25cm}}}
\label{tabel_N}
\end{table}
Furthermore, we partition the matrix $\widehat{A}_{jk}$ into block form \begin{equation}\label{a_st}
\widehat{A}_{jk}=:[\widehat{\bf a}_{st}]_{s,t=1}^{n_j},
\end{equation} where each $\widehat{\bf a}_{st}$ is an $n_k\times n_k$ matrix.
Then,  the linear map $\Phi_j:\mathbb{C}^{m_k\times (n_k-m_k)}\rightarrow\mathbb{C}^{m_j\times (n_j-m_j)}$ is given by
\begin{equation}\label{Phi_Zk}
Z_k\mapsto \Phi_j(Z_k) := U_j^{\dagger}\Big[
\tr(U_k^{\dagger}\widehat{\bf a}_{st}V_k Z_k^{\dagger}+Z_kV_k^{\dagger}\widehat{\bf a}_{st}^{\dagger}U_k) \Big]_{s,t=1}^{n_j} V_j.
\end{equation}
Finally, the complete Newton-like algorithm for the optimization of  $\rho_A$ on $\mathrm{Gr}^{\times r}({\bf m},{\bf n})$ is given by Algorithm 1.

\

{\bf Suggestions for implementation.} (a)
For an arbitrary matrix $A\in\mathfrak{her}_N$, the computation of $\widehat{A}_j$ and $\widehat{A}_{jk}$ 
is performed according to formula (\ref{tau_delta}). This can be simplified in the case of the applications described in Section 3.3.\\
 {\bf Case 1.} If $A=vv^{\dagger},$ with $v=\mathrm{vec}(\mathcal{A}),$ $\mathcal{A}\in\C^{n_1\times \dots\times n_r}$, then \begin{equation}\label{Psi_tensor}
\widehat{A}_j=B_{(j)}\cdot B_{(j)}^{\dagger}\in\mathfrak{her}_{n_j}\;\;\;\textrm{and}\;\;\;  \widehat{A}_{jk}=C_{(j,k)}\cdot C_{(j,k)}^{\dagger}\in\mathfrak{her}_{n_jn_k},
\end{equation}
where $B_{(j)}$ and $C_{(j,k)}$ are the $j-$th mode and respectively $(j,k)-$th mode  matrices of the tensors   $\mathcal{B}=\mathcal{A}\times_1 U_1^{\dagger}\times_2 \dots\times_k I_{n_k}\times_{k+1} \dots\times_r U_r^{\dagger}$ and $\mathcal{C}=\mathcal{A}\times_1 U_1^{\dagger}\times_2 \dots\times_j I_{n_j}\times_{j+1}\dots\times_k I_{n_k}\times_{k+1} \dots\times_r U_r^{\dagger}$, respectively.\\
{\bf Case 2.} If  $A=\sum\limits_{l=1}^L \underset{r\;\textrm{times}}{\underbrace{x_lx_l^{\dagger}\otimes\cdots\otimes x_lx_l^{\dagger}}}$  with  $x_l\in\mathbb{C}^{n}$ and  $L\in\mathbb{N}$,  then
 \begin{equation}\label{Psi_sc}
\widehat{A}_j=\sum\limits_{l=1}^L\Big(\prod\limits_{\substack{i=1\\i\neq j}}^r \|P_ix_l\|^2\Big) x_lx_l^{\dagger}\;\;\;\textrm{and}\;\;\; \widehat{A}_{jk}=\sum\limits_{l=1}^L\Big(\prod\limits_{\substack{i=1\\ i\neq j, i\neq k}}^r \|P_ix_l\|^2\Big) x_lx_l^{\dagger}\otimes x_lx_l^{\dagger}.
\end{equation}
(b) To solve the system (\ref{sylvester2}), one can rewrite it as a linear equation on $\mathbb{R}^{d}$ ($d$ is the dimension of $\mathrm{Gr}^{\times r}({\bf m},{\bf n})$) using matrix Kronecker products and $\mathrm{vec}-$operations, then solve this by any linear equation solver.\\[1mm]
(c)
The computation of geodesics on matrix manifolds
usually requires the matrix exponential map, which is in general
an expensive procedure of order $O(n^3)$.
Yet, for the particular case of the
Grassmann manifold $\mathrm{Gr}_{m,n}$, Gallivan et.al.~\cite{gsld}
have developed an efficient method to compute the matrix
exponential, reducing the complexity order to $O(nm^2)$ ($m<n$).
Our approach, however, is based on a first order approximation
of the matrix exponential $\mathrm{e}^{[\zeta,\Pi]}$
followed by a QR-decomposition to preserve orthogonality/unitarity.
Explicitly, it is given by
\begin{equation}
 \left[\begin{array}{cc}
  I_{m} & -Z \\
  Z^{\dagger} & I_{n-m}
\end{array}\right ]_Q=W\left[
\begin{array}{ccc}
 D^{-1} & \Sigma D^{-1} & 0\\
-\Sigma^{\dagger}D^{-1} & D^{-1} & 0\\
0 & 0 & I_{n-2m}
\end{array}\right]
W^{\dagger},
\end{equation} where $Z=X\Sigma Y^{\dagger}$ with $X\in \mathrm{SU}_{m},\; Y\in\mathbb{C}^{(n-m)\times m}$, $Y_j^{\dagger}Y_j=I_{m_j}$ and $\Sigma\in\mathbb{C}^{m\times m}$ diagonal. Furthermore,
\begin{equation}
\begin{array}{cc}
W:=\left[\begin{array}{ccc}
  X^{\dagger} & 0 & 0\\
  0 & Y & Y'
\end{array}\right ]\in\mathrm{SU}_{n}, &
D:=\sqrt{I_{m}+\Sigma^{\dagger}\Sigma},
\end{array}
\end{equation} where $[Y\;\; Y']\in\mathrm{SU}_{n-m}$ is an unitary completion of $Y$. The computational complexity of this QR-factorization is of order $O((n-m)m^2)$.\\[1mm]
(d) The convergence of the algorithm is not guaranteed for arbitrary initial conditions $P\in\mathrm{Gr}^{\times r}({\bf m},{\bf n})$ and even in the case of convergence the limiting point need not be a local maximizer of the function. To overcome this, one could for example test if the computed direction is ascending, else take the gradient as the new direction. Furthermore, one can make an iterative line-search in the ascending direction.

\smallskip

In the following theorem we prove that the sequence generated by  Algorithm 1 converges quadratically to a critical point of the generalized Rayleigh-quotient $\rho_A$ if  the sequence starts in a sufficiently small neighborhood of the critical point.
\begin{thm} Let $A\in\mathfrak{her}_N$ and $P^*\in
\mathrm{Gr}^{\times r}({\bf m},{\bf n})$ be a non-degenerate critical point of the generalized Rayleigh-quotient $\rho_A$,
  then the sequence generated by the N-like  algorithm  converges locally quadratically to  $P^*$.
\end{thm}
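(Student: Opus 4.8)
The plan is to deduce the statement from the classical local quadratic convergence of the intrinsic (geodesic) Riemannian Newton method, by recognising Algorithm~1 as a version of that method in which the geodesic updates are replaced by the computationally cheaper QR-type updates \eqref{QRcoord1}, and by showing that this substitution does not affect the convergence rate. This is precisely the strategy of \cite{hht}.

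First I would set $\mathcal{M}:=\mathrm{Gr}^{\times r}({\bf m},{\bf n})$ and observe that, by Theorem~\ref{thhess}, the operator on the left of the Newton equation \eqref{Neq} is the Levi-Civita Hessian ${\bf H}_{\rho_A}(P)$. Since $P^*$ is by hypothesis a non-degenerate critical point, ${\bf H}_{\rho_A}(P^*)$ is an isomorphism, and as ${\bf H}_{\rho_A}(P)$ depends smoothly on $P$ it remains an isomorphism on a neighbourhood of $P^*$; there the Newton equation has a unique solution $\xi=\xi(P)$ depending smoothly on $P$, with $\xi(P^*)=0$ because $\mathrm{grad}\,\rho_A(P^*)=0$. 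Hence the iteration map $P\mapsto P^{\mathrm{new}}=\mu_P^{\mathrm{QR}}(\xi(P))$ is smooth near $P^*$ and fixes $P^*$. Fixing a chart $\varphi:=(\mu_{P^*}^{\mathrm{exp}})^{-1}$ around $P^*$ with $\varphi(P^*)=0$ and $D\varphi^{-1}(0)=\mathrm{id}$, and letting $N$ be the representation of the iteration in this chart, it suffices by Taylor's theorem to prove $DN(0)=0$: this gives $\|\varphi(P^{(k+1)})\|\le c\,\|\varphi(P^{(k)})\|^2$ for iterates started sufficiently close to $P^*$, i.e.\ local quadratic convergence of $\{P^{(k)}\}$ to $P^*$.

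To obtain $DN(0)=0$ I would compare $N$ with the chart representation $N_{\mathrm{exp}}$ of the \emph{geodesic} Newton method --- the same Newton equation \eqref{Neq}, but with the update $P\mapsto\mu_P^{\mathrm{exp}}(\xi(P))$ along geodesics. Since \eqref{Neq} uses the Levi-Civita Hessian and $P^*$ is a non-degenerate critical point, $N_{\mathrm{exp}}$ is the classical intrinsic Newton method of \cite{gd,st,ams}, for which local quadratic convergence to $P^*$, and hence $N_{\mathrm{exp}}(0)=0$ and $DN_{\mathrm{exp}}(0)=0$, is known. It therefore remains to control $N-N_{\mathrm{exp}}$, which comes only from the final update. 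Because $\mu_P^{\mathrm{exp}}$ and $\mu_P^{\mathrm{QR}}$ are both local parametrizations in the sense of \eqref{paramoplus} --- in particular $D\mu_P^{\mathrm{QR}}(0)=\mathrm{id}=D\mu_P^{\mathrm{exp}}(0)$ --- and, by the additional compatibility condition, their second-order jets at $0$ coincide uniformly for $P$ in a neighbourhood of $P^*$, one has $\mu_P^{\mathrm{QR}}(\xi)=\mu_P^{\mathrm{exp}}(\xi)+O(\|\xi\|^3)$. Combined with $\|\xi(P)\|=O(\mathrm{dist}(P,P^*))$, a consequence of $\xi(P^*)=0$ and the smoothness of $\xi(\cdot)$, this yields $N(x)=N_{\mathrm{exp}}(x)+O(\|x\|^3)$, so $DN(0)=DN_{\mathrm{exp}}(0)=0$.

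The main obstacle is exactly the verification of the compatibility condition invoked above: one must expand the unitary $Q$-factor of $X_j=I+[\xi_j,P_j]$ to second order in $\xi_j$ and check that it agrees, modulo $O(\|\xi_j\|^3)$, with the second-order Taylor expansion of the geodesic $\xi_j\mapsto e^{-[\xi_j,P_j]}P_je^{[\xi_j,P_j]}$, uniformly for $P$ near $P^*$. This is where the particular QR-type parametrization \eqref{QRcoord1} is used in an essential way, rather than an arbitrary first-order approximation of the matrix exponential. Once this expansion is in hand, the remaining ingredients --- the implicit function theorem for the Newton solution map, the classical quadratic convergence of the geodesic Newton method, and the Taylor estimate --- are routine.
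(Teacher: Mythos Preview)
Your overall strategy is the same as the paper's: both reduce to the framework of \cite{hht}. The paper's own proof is a two-line invocation of Theorem~4.1 in \cite{hht}, noting only that $D\mu_{P^*}^{\mathrm{exp}}(0)=D\mu_{P^*}^{\mathrm{QR}}(0)=\mathrm{id}$, i.e.\ the \emph{first-order} compatibility condition \eqref{paramoplus}. Your proposal spells out what that citation is doing, which is fine, but it contains one genuine misstep.

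You claim that the ``additional compatibility condition'' means the second-order jets of $\mu_P^{\mathrm{exp}}$ and $\mu_P^{\mathrm{QR}}$ agree at $0$, and you flag the verification of this $O(\|\xi\|^3)$ agreement as the main obstacle. This is both unnecessary and, as stated, unproved (indeed, QR-type retractions generally do \emph{not} reproduce the second-order behaviour of the Riemannian exponential). The point is that first-order agreement already suffices: from $D\mu_P^{\mathrm{QR}}(0)=D\mu_P^{\mathrm{exp}}(0)=\mathrm{id}$, uniformly for $P$ near $P^*$, you obtain $\mu_P^{\mathrm{QR}}(\xi)=\mu_P^{\mathrm{exp}}(\xi)+O(\|\xi\|^2)$. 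Combined with $\|\xi(P)\|=O(\mathrm{dist}(P,P^*))$ this gives $N(x)=N_{\mathrm{exp}}(x)+O(\|x\|^2)$, and since $N_{\mathrm{exp}}(x)=O(\|x\|^2)$ by the classical geodesic Newton convergence, you still get $N(x)=O(\|x\|^2)$ and hence $DN(0)=0$. Your own chain of implications goes through verbatim once you replace ``$O(\|\xi\|^3)$'' by ``$O(\|\xi\|^2)$'' and ``$O(\|x\|^3)$'' by ``$O(\|x\|^2)$''.

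So the obstacle you identify is a red herring: drop the second-order expansion of the $Q$-factor entirely, keep only the first-order retraction property \eqref{paramoplus}, and your argument is complete and matches the paper's.
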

\begin{proof} For the critical point $P^*\in \mathrm{Gr}^{\times r}({\bf m},{\bf n})$, the Riemannian coordinates ($\ref{exp1}$) and the QR- coordinates
  ($\ref{QRcoord1}$) satisfy the condition
  $D\mu_{P^*}^{\mathrm{exp}}(0)=D\mu_{P^*}^{\mathrm{QR}}(0)=\mathrm{id}_{\mathrm{T}_{P^*}\mathrm{Gr}^{\times r}({\bf m}, {\bf n})}$. Thus, according to Theorem 4.1. from \cite{hht} there exists a neighborhood $V\subset
   \mathrm{Gr}^{\times r}({\bf m},{\bf n})$ such that the sequence of  iterates  generated by the N-like algorithm converges quadratically to $P^*$ when the initial point $P$ is in  $V$.
\end{proof}

\subsection{Riemannian conjugated gradient algorithm}

The quadratic convergence of the Newton-like algorithm has the drawback of high computational complexity. Solving the Newton equation (\ref{sylvester2}) yields a cost per iteration of order $O(d^3)$, where $d$ is the dimension of $\mathrm{Gr}^{\times r}({\bf m},{\bf n}).$
In what follows, we offer
as an alternative to reduce the computational costs of the Newton-like algorithm by a conjugated gradient method. The linear conjugated gradient (LCG) method is used for solving large systems of linear equations with a symmetric positive definite matrix, which is achieved by iteratively  minimizing a convex quadratic function  $x^{\dagger}Ax$. The initial direction $d_0$ is chosen as the steepest descent and every forthcoming direction $d_j$ is required to be conjugated  to all the previous ones, i.e. $d_j^{\dagger} A d_k=0,$ for all $k=0,\cdots,j-1$. The exact maximum along a direction gives the next iterate. Hence, the optimal solution is found in at most $n$ steps, where $n$ is the dimension of the problem.
 Nonlinear conjugated gradient (NCG)  methods use the same approach for general functions $f:\R^n\rightarrow\R$, not necessarily convex and quadratic. The  update in this case  reads as
\begin{equation*}
\begin{array}{cc}
x^{\mathrm{new}}=x+\alpha d & \mathrm{and}\;\;\;
d^{\mathrm{new}}=-\nabla f(x^{\mathrm{new}}) +\beta d,
\end{array}
\end{equation*}
where the step-size $\alpha$ is obtained by a line search in the direction $d$
\begin{equation}\label{stepsize}
\alpha=\underset{t}{\mathrm{arg}\min} f(x+t d)
\end{equation} and  $\beta$ is given by one of the
formulas: Fletcher-Reeves, Polak-Ribiere,
Hestenes-Stiefel, or other.
We refer to \cite{st} for the generalization of the NCG  method
to a Riemannian manifold.
For the computation of the step-size along the geodesic in
direction $\xi$,  an exact line search --- as in the classical
case --- is an extremely expensive procedure. Therefore, one
commonly approximates (\ref{stepsize}) by an Armijo-rule,
which ensures at least that the step
length decreases the function sufficiently.
We, however, have decided to compute the step-size
by performing a one-dimensional Newton-step along the geodesic,
since in the neighborhood of a critical point one Newton step can
lead very close to the solution.
Therefore, at
$P=(P_1,\cdots,P_r)\in\mathrm{Gr}^{\times r}({\bf m},{\bf n})$ the  step-size in direction $\xi=(\xi_1,\cdots,\xi_r)\in\mathrm{T}_P\mathrm{Gr}^{\times r}({\bf m}, {\bf n})$  is given by \begin{equation}
\alpha=-\frac{(\rho_A\circ\gamma)'(0)}{(\rho_A\circ\gamma)''(0)}
\end{equation}
where $\gamma:I\rightarrow\mathrm{Gr}^{\times r}({\bf m},{\bf n})$ is the
unique geodesic through $P$ in direction $\xi$.

\begin{table}[h!]
\fbox{\parbox{12.7cm}{
\begin{minipage}{12.7cm}\vspace{0.25cm}
ALGORITHM 2.\;\;\;\;\;\;\;\;\;\;\;\;\;\;\;\;\;\;\;\;\;\;\;\;\;\; {\bf RCG algorithm}\\
\HRule \vspace{0.25cm}

{\bf Step 1.}
 {\bf Starting point:} Given $P=(P_1,\dots, P_r)\in\mathrm{Gr}^{\times r}({\bf m},{\bf n})$ choose
\begin{equation*}
\Theta_j=\left[\begin{array}{cc}U_j &  V_j
 \end{array}
\right]\in\mathrm{SU}_{n_j},\; U_j^{\dagger}U_j=I_{m_j},\; V_j^{\dagger}V_j=I_{n_j-m_j},\end{equation*}
such that $P_j=\Theta_j\Pi_j\Theta_j^{\dagger}$,  for $j=1,\dots, r.$\\[1mm]
{\bf Initial direction:}
Set $$\widehat{A}_j:=\Psi_{A,j}(P_1,\dots,I_{n_j},\dots,P_r),
$$ compute
$\Psi_{j}',\;\Psi_{j}'',\;\Psi_{j}'''\;\textrm{ as in (\ref{not1})}$ and take the steepest descent direction  $$Z_j= - g_j:=-\Psi_{j}''',$$
for $j=1,\dots, r.$  Denote $Z:=(Z_1,\dots,Z_r),\; g:=(g_1,\dots, g_r).$\\[1mm]
{\bf Step 2.}
{\bf  Stopping criterion:}
$\|\grad_{\rho_A}(P)\|/\rho_A(P)<\varepsilon$.\\[1mm]
{\bf Step 3.} {\bf  QR-updates:}
\begin{equation}
\begin{array}{cc}
\Theta_j^{\mathrm{new}}=\Theta_j\left[ \begin{array}{cc}
\alpha I_{m_j} & \alpha Z_j\\[2mm]
-\alpha Z_j^{\dagger} & \alpha I_{n_j-m_j}
\end{array}\right]_Q, &
P_j=\Theta_j\Pi_{j}\Theta_j^{\mathrm{new}^\dagger},
\end{array}
\end{equation} with the step-size given by
 $\alpha=-a/(b+c)$,
where
 \begin{equation*}
\begin{array}{l}
a := \sum\limits_{j=1}^r\tr\biggl(\Psi_{j}'''Z_{j}^{\dagger}\biggr),\;\;  b
:=\displaystyle\sum\limits_{j=1}^r\tr\biggl(\Psi_{j}'Z_{j}Z_{j}^{\dagger}-Z_{j}\Psi_{j}''Z_{j}^{\dagger}\biggr),\\[4mm]
c:=\sum\limits_{j=1}^{r-1}\sum\limits_{k=j+1}^r  \rho_A(P_1,\dots,\xi_j,\dots,\xi_k,\dots, P_r), \end{array}
\end{equation*}  for $j=1,\dots, r$.  The tangent vectors $\xi_j$ are given in \eqref{xi}.\\[1mm]
 {\bf Step 4.} Set $P:=P^{\mathrm{new}}$ and $\Theta:=\Theta^{\mathrm{new}}$.\\[1mm]
{\bf Step 5.} {\bf  New direction:}
   Update $\Psi_{j}',\;\Psi_{j}'',\;\Psi_{j}'''$ as in (\ref{not1}) and compute the new  direction
\begin{equation}\label{conj_dir}
 Z_j^{\mathrm{new}}= - g_j^{\mathrm{new}}+\beta \; Z_j,\; g_j^{\mathrm{new}}:=\Psi_{j}''',
\end{equation}
for $j=1,\dots, r.$ Here, $\beta$ is given by  the  Polak-Ribiere formula \begin{equation}
\beta=\frac{\langle g^{\mathrm{new}}, g^{\mathrm{new}}-g\rangle}{\langle g, g\rangle}
\end{equation}
{\bf Step 6.} Set $g:=g^{\mathrm{new}},\; Z:=Z^{\mathrm{new}}$ and go to Step 2.\end{minipage}\vspace{0.25cm}}}
\end{table}
 Let $\Theta:=(\Theta_1,\cdots,\Theta_r)$ be such that
$\Theta_kP_k\Theta_k^{\dagger}=\Pi_k.$ 
Furthermore, let $P^{\mathrm{new}}:=(P_1^{\mathrm{new}},\cdots,P_r^{\mathrm{new}})$ denote the updated point in $\mathrm{Gr}^{\times
  r}({\bf m},{\bf n})$ via the QR-coordinates  as in (\ref{up}).
For the computation of the new direction, a  ``transport'' of the old direction  $\xi=(\xi_1,\cdots,\xi_r) $ from $\mathrm{T}_P\mathrm{Gr}^{\times r}({\bf m},{\bf n})$ to the tangent space $\mathrm{T}_{P^{\mathrm{new}}}\mathrm{Gr}^{\times r}({\bf m},{\bf n})$ is required.
We use the following approximation for the paralle transport of $\xi$ along the geodesic through $P$ in direction $\xi$
 \begin{equation}
\begin{array}{cc}
 \xi_j\mapsto\Theta_j^{\mathrm{new}}\left[\begin{array}{cc}
0 & Z_j\\[2mm]
Z_j^{\dagger} & 0
\end{array}\right]\Theta_j^{\mathrm{new}^{\dagger}}, & \textrm{where   } \Theta_j^{\mathrm{new}}=\Theta_j\left[\begin{array}{cc}
I_{m_j} & -Z_j\\[2mm]
Z_j^{\dagger} & I_{n_j-m_j}
\end{array}\right]_Q,
\end{array}
 \end{equation} for all $j=1,\dots,r.$

The complete Riemannian conjugated gradient is presented as Algorithm 2.

It is recommended to reset the search direction to the steepest descent direction
after $d$ iterations, i.e.~$Z_{k}^{\mathrm{new}}:= - g_k^{\mathrm{new}},\; k=1,\dots, r$, where $d$ refers to the dimension of the manifold. For the maximization of the generalized Rayleigh-quotient the initial direction is $Z_k=g_k$ and the update  $Z_k^{\mathrm{new}}=g_k^{\mathrm{new}}+\beta \; Z_k.$

The convergence properties of the NCG methods are
in general difficult to analyze. Yet, under moderate supplementary
assumptions on the cost function one can guarantee that the NCG
converges to a stationary point \cite{nw}.
It is expected that the proposed Riemannian conjugated gradient
method has properties similar to those of the NCG.

\subsection{Numerical experiments}

In this section we run several numerical experiments suitable for
the applications mentioned in Section 3.2, i.e.~best rank approximation for
tensors and subspace clustering, to test the Newton-like
(N-like) and Riemannian conjugated gradient (RCG) algorithms. The
algorithms were implemented in MATLAB on a personal notebook with 1.8 GHz Intel Core 2 Duo processor.

\

\subsubsection{Best multilinear rank-$(m_1,\dots,m_r)$ tensor approximation.}
To test the performance of our algorithms we have considered several examples of large size tensors of order $3$ and $4$ with entries chosen from the standard normal distribution and estimated their best low-rank approximation.
We have started with a truncated HOSVD (\cite{lmv1}) and performed several HOOI iterates before we run our N-like and RCG algorithms.
Depending on the size of the tensor, the number of HOOI iterations necessary to reach the region of attraction of a stationary point $P^*\in\mathrm{Gr}^{\times r}({\bf m},{\bf n})$, ranges from 10 to 100. As stopping criterion we have chosen that the relative norm of the gradient $\|\grad_{\rho_A}(P)\|/\rho_A(P)$ is approximately $10^{-13}$. 

{\bf Computational complexity. }The computational complexity of the N-like method is determined by the computation of the Hessian and the solution of the Newton equation (\ref{sylvester3}). Thus, for the best rank-$(m,m,m)$ approximation of a $n\times n\times n$ tensor, the computation of the Hessian is dominated by tensor-matrix multiplications and is of order $O(n^3m)$. Solving the Newton equation by Gaussian elimination gives a computational complexity of order $O(m^3(n-m)^3)$, i.e. the dimension of the manifold to the power of three.
For the computational costs of the RCG method we have to take into discussion only tensor-matrix multiplications, which give a cost per RCG iteration of order $O(n^3m)$.

{\bf Experimental results and previous work. } The problem of best low-rank tensor approximation has enjoyed a lot of attention recently. Apart from the well known higher order orthogonal iterations -- HOOI (\cite{lmv2}), various algorithms which exploit the manifold structure of the constraint set have been  developed. We refer to \cite{es,ilav2} for Newton methods, to \cite{sl} for quasi-Newton methods and to \cite{ilav1}  for conjugated gradient and trust region methods on the Grassmann manifold.
Similar to the Newton methods in \cite{es,ilav2}, our N-like method converges quadratically to a stationary point of the generalized Rayleigh-quotient when starting in its neighborhood.

We have compared our algorithms with the existing ones in the literature: quasi-Newton with BFGS, Riemannian conjugated gradient method which uses the Armijo-rule for the computation of the step-size (CG-Armijo), and HOOI.
The algorithms were run on the same platform, identically initialized and with the same stopping criterion. For the BFGS quasi-Newton and limited memory quasi-Newton (L-BFGS)  methods we have used the code available in \cite{savas}.

\begin{figure}[htb]
  \begin{center}
\resizebox{63mm}{!}{\includegraphics[width=6.3cm,height=4.5cm]{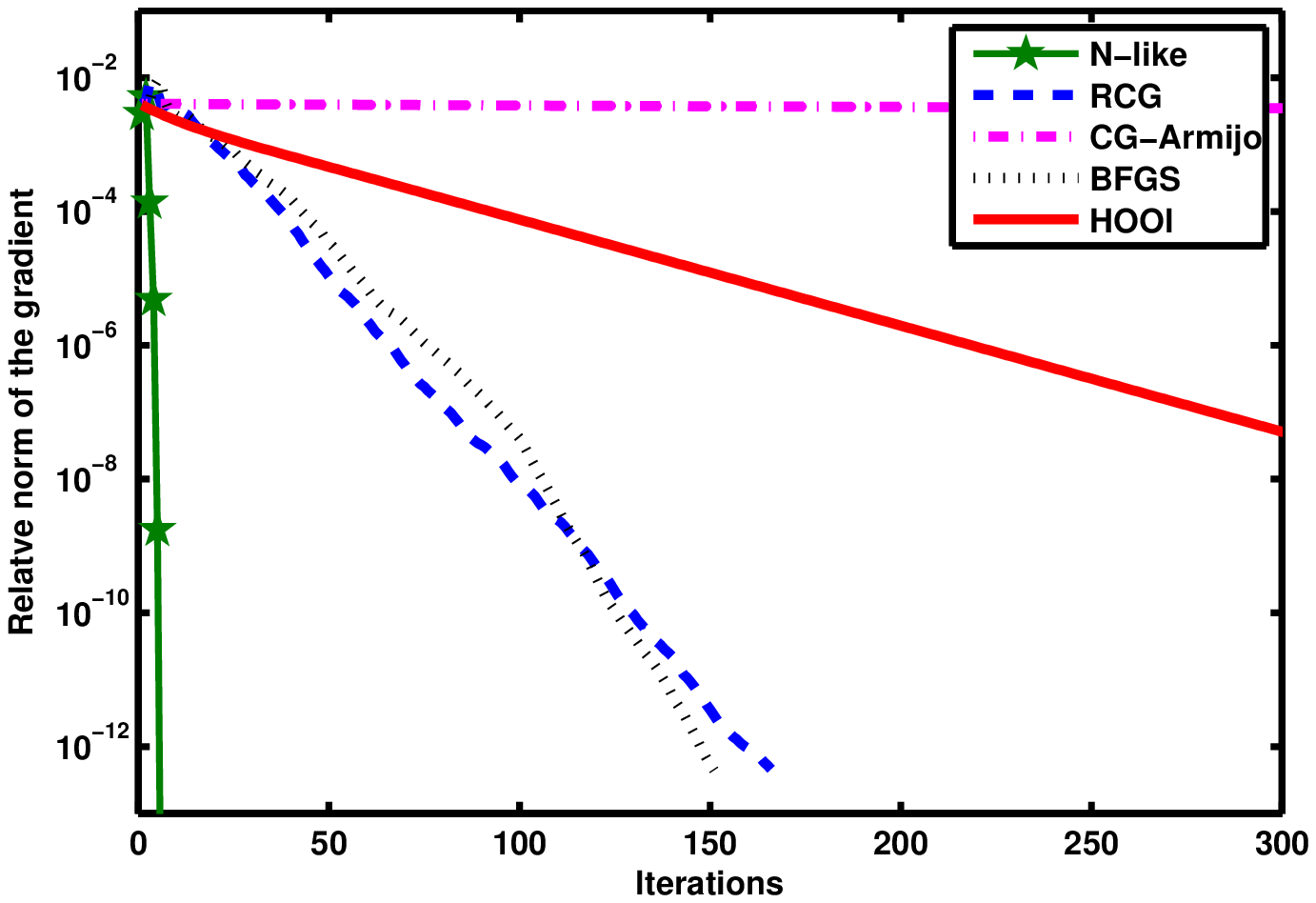}}
\resizebox{63mm}{!}{\includegraphics[width=6.3cm,height=4.5cm]{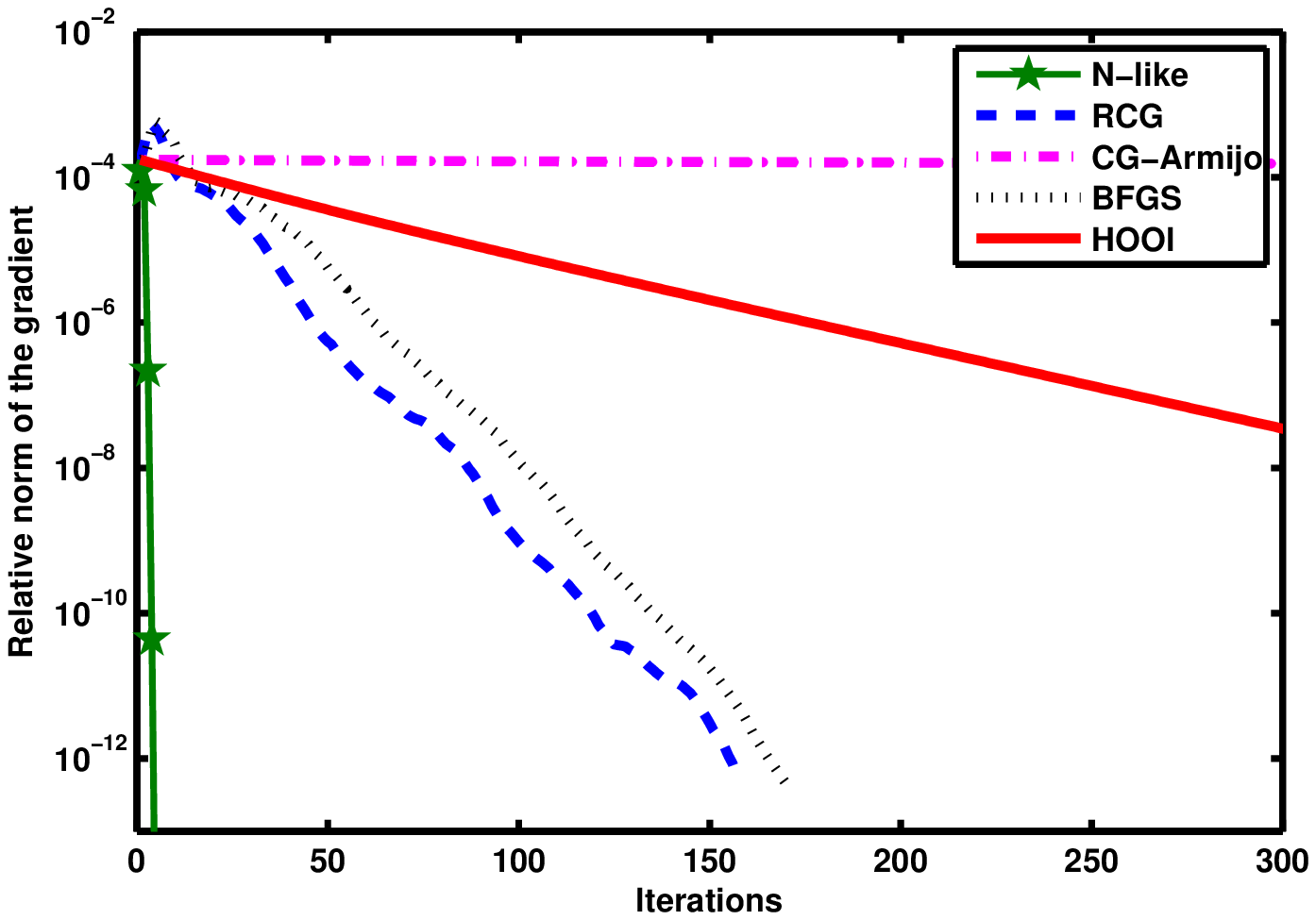}}
 \caption{Convergence for multilinear rank tensor approximation: number of
      iterations versus the relative norm of the gradient $\|\grad_{\rho_A}(P^n)\|/\rho_A(P^n)$ at a
      logarithmic scale.
Left: $100\times 100\times 100$ tensor approximated by a rank-$(5,5,5)$
tensor. Right: $100\times 150\times 200$ tensor approximated by a rank-$(15,10,5)$
tensor. }
    \label{fig1}
  \end{center}
\end{figure}
\begin{figure}[htb]
  \begin{center}
\resizebox{63mm}{!}{\includegraphics[width=6.3cm,height=4.5cm]{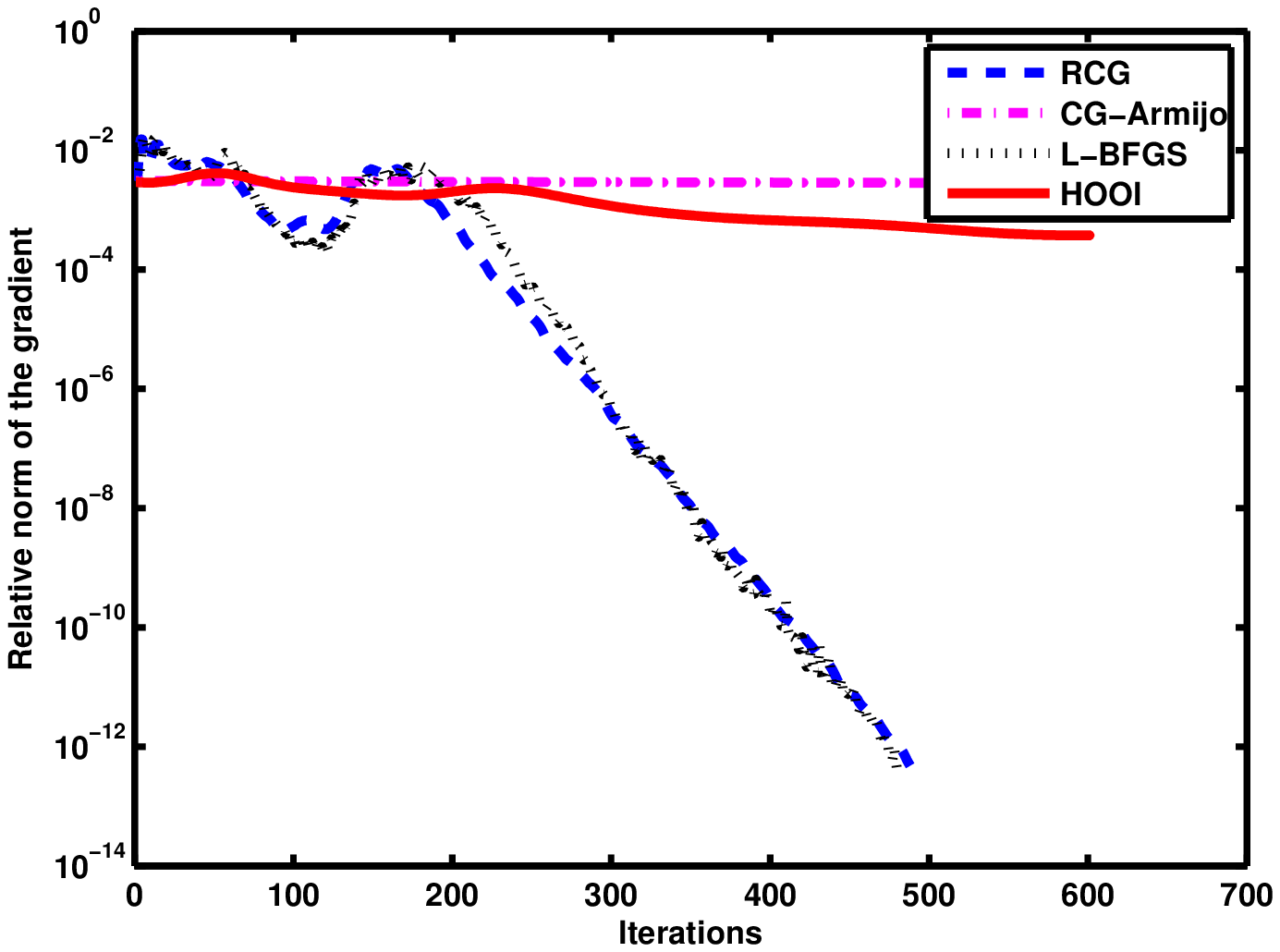}}
   \resizebox{63mm}{!}{\includegraphics[width=6.3cm,height=4.5cm]{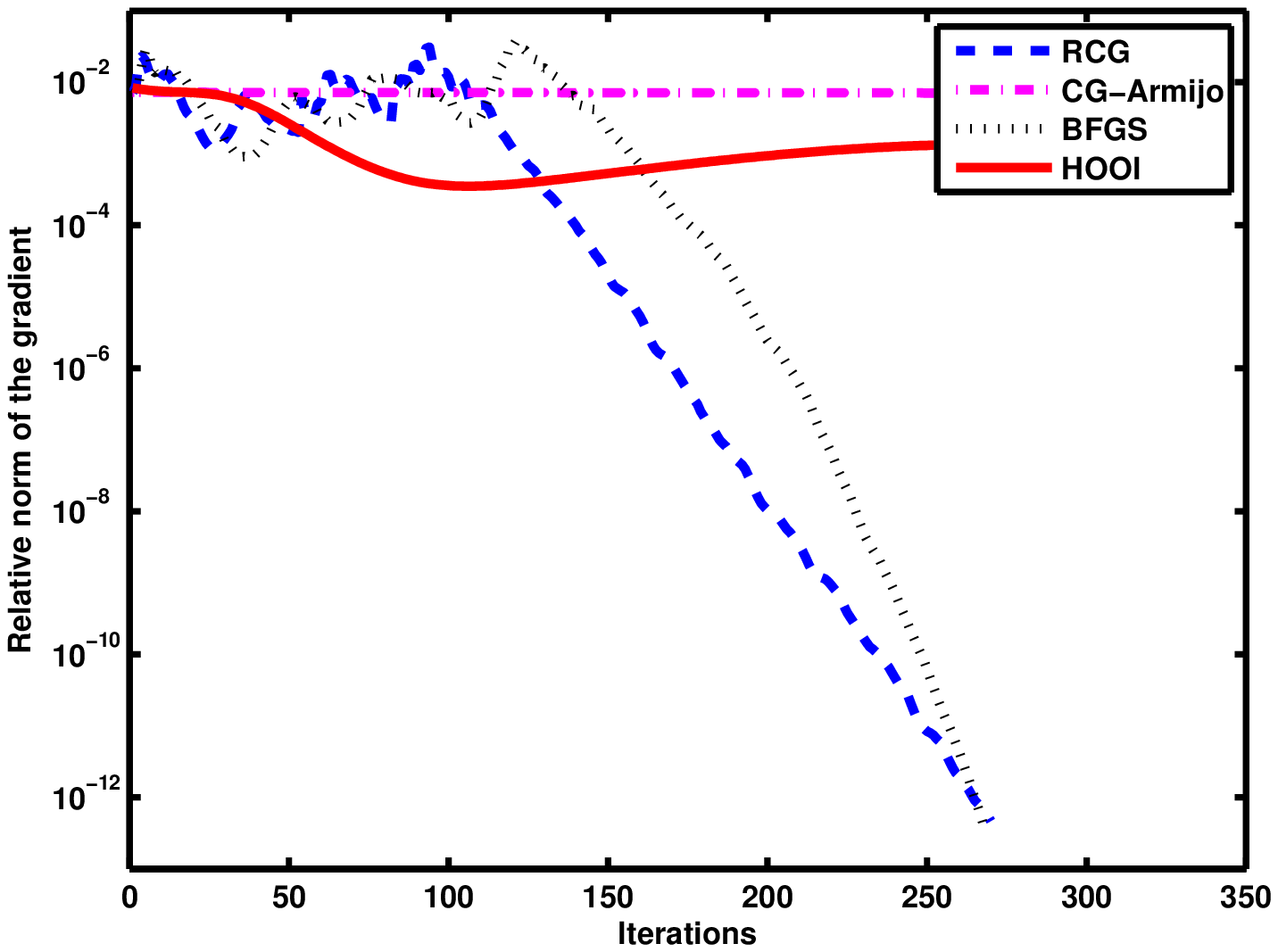}}
 \caption{Convergence for multilinear rank tensor approximation: number of
      iterations versus the relative norm of the gradient $\|\grad_{\rho_A}(P^n)\|/\rho_A(P^n)$ at a
      logarithmic scale.
Left: $200\times 200\times 200$ tensor approximated by a rank-$(10,10,10)$
tensor. Right: $50\times 50\times 50\times 50$ tensor approximated by a rank-$(5,5,5,5)$
tensor. }
    \label{fig11}
  \end{center}
\end{figure}

Fig.~\ref{fig1} shows convergence results for two large size tensors $100\times 100\times 100$
and $100\times 150\times 200$ approximated by rank-$(5,5,5)$ and
rank-$(15,10,5)$ tensors, respectively. In Fig.~\ref{fig11} we plot
the convergence behavior of the RCG method for the best rank-$(10,10,10)$ approximation of a  $200\times 200\times 200$ tensor (left) and for the best rank-$(5,5,5,5)$ approximation of a $50\times 50\times 50\times 50$ tensor.  Due to the limited memory space, we were not able to run the N-like and BFGS quasi-Newton algorithms for the example on the left. Yet it was still possible to run RCG, L-BFGS, CG-Armijo and HOOI.

In Table~\ref{tab:1} we display the
average CPU times necessary to compute a low rank best
approximation for tensors of different sizes and orders
by N-like, RCG, BFGS and L-BFGS quasi-Newton methods. We have performed 100 runs for each example.

\begin{table}[h]
\centering
\caption{\small{Average CPU Time}}
    \begin{tabular}{ | l | c | c |c|c|}
\toprule
\textbf{Tensor size and rank} & {\bf N-like} & {\bf  RCG} & {\bf BFGS} & {\bf L-BFGS}\\[1mm]
 \hline
\toprule
$50\times 50\times 50,\;$ rank-$(7, 8, 5)$ & 2 s  & 6 s  & 24 s  & 13 s  \\ \hline
$100\times 100\times 100,\;$ rank-$(5, 5, 5)$ & 70 s & 75 s & 150 s  & 94 s\\
\hline
$200\times 200\times 200,\;$ rank-$(5, 5, 5)$ & -  & 11 min & - & 14 min \\ \hline
$50\times 50\times 50\times 50,\;$ rank-$(5, 5, 5, 5)$ & -  & 9 min & 11 min  & -  \\
 \hline
\bottomrule
\end{tabular}
\label{tab:1}
\end{table}

{\bf Resume. } First we mention that there is no guarantee that the N-like and RCG iterations converge to a local maximizer of the generalized Rayleigh-quotient.  However, in the examples presented  in Fig.\ref{fig1} and Fig.\ref{fig11} the limiting points are local maximizers. As the numerical experiments have shown, the N-like method has the advantage of fast convergence. Unfortunately, for large scale problems, the N-like algorithm can not be applied, as mentioned before. Even when it is possible to apply N-like algorithm, it needs a large amount of time per iteration. As an example, for  the best rank-$(10,10,10)$ of a $180\times 180\times 180$ tensor, one N-like iteration took  three minutes.
Related algorithms which explicitly compute the Hessian and solve the Newton equation, such as  \cite{es, ilav2}, and those which approximately solve the Newton equation such as the trust region method   \cite{ilav1}, face the same difficulty for large scale problems. 
 On the other hand, the low cost iterations of the RCG method makes it a good candidate to solve large size problems. The convergence rate is comparative to that of the BFGS quasi-Newton method in \cite{savas}, but at much lower computational costs.  Our experiments exhibit the shortest CPU time for the RCG method. In the examples in which the tensor was a small perturbation of a low-rank tensor, the RCG algorithm exhibits quadratic convergence.

\

\subsubsection{Subspace Clustering}  The experimental setup consists in choosing $r$
subspaces in $\R^3$ ($r=2,3$ and $4$) and collections of $200$
randomly chosen\footnote{The points have been generated by fixing an
orthogonal basis within the subspaces and choosing corresponding coordinates
randomly with a uniform distribution over the interval $[-5,5]$.}
points on each subspace. Then, the sample points are perturbed by
adding zero-mean Gaussian noise with standard deviation
varying from $0\%$ to $5\%$ in the different experiments.
Now, the goal is to detect the exact subspaces or to approximate
them as good as possible. For this purpose, we apply our N-like and
RCG algorithms to solve the associated optimization task, cf. Section 3.1.
The error between the exact subspaces
and the estimated ones is measured as in \cite{vmp}, i.e.
\begin{equation}
\mathrm{err}:=\frac{1}{r}\sum\limits_{j=1}^{r}\mathrm{arccos}\biggl(\frac{1}{m_j^2}| \tr(P_j\tilde{P}_j)|\biggr),
\end{equation}
where $P_j$ is the orthogonal projector corresponding to the exact
subspace and $\tilde{P}_j$ the orthogonal projector corresponding
to the estimated one.
\begin{figure}[htb]
  \begin{center}
    \resizebox{63mm}{!}{\includegraphics[width=6.3cm,height=5cm]{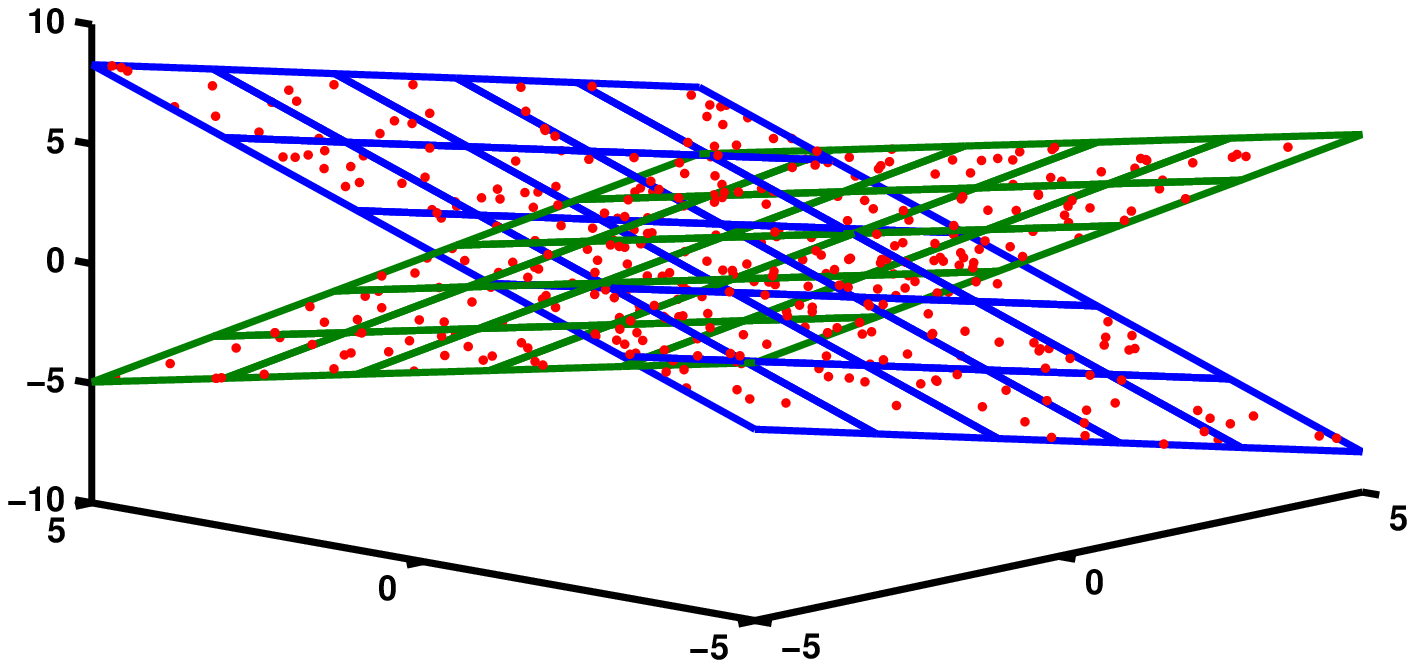}}
     \resizebox{63mm}{!}{\includegraphics[width=6.3cm,height=5cm]{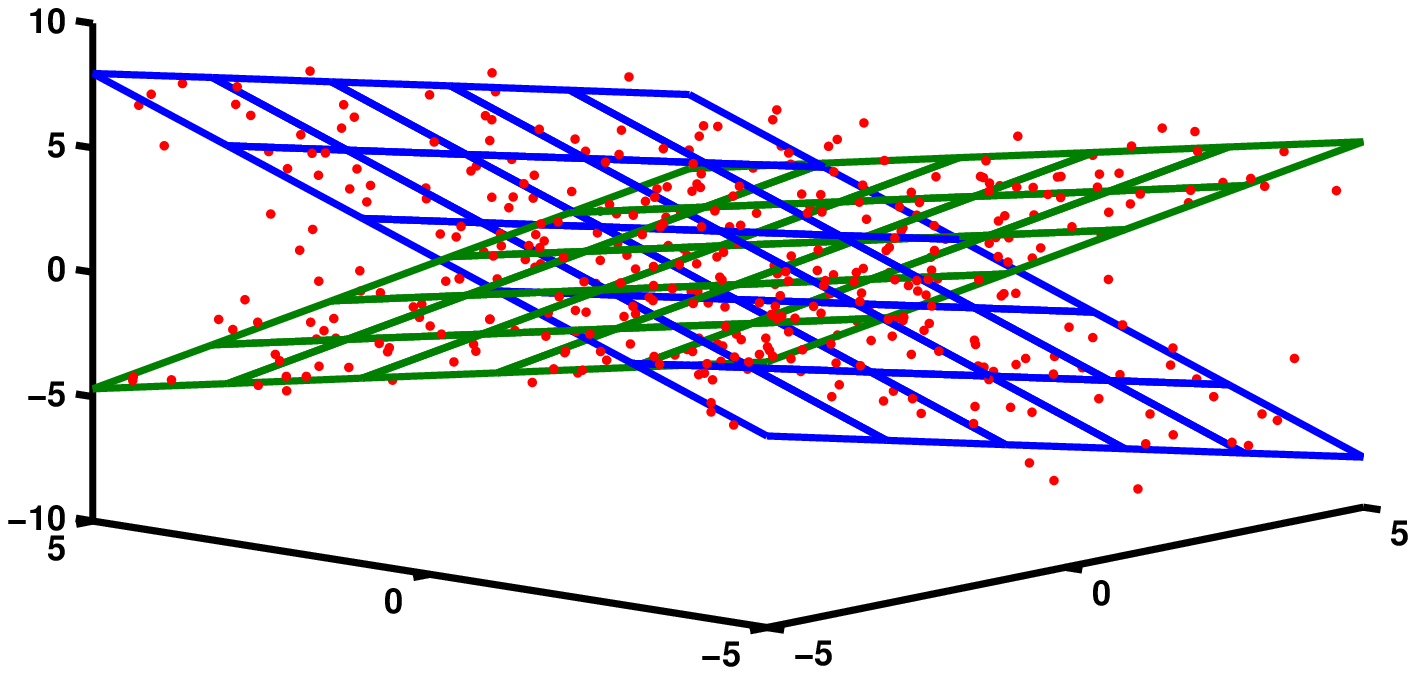}}
    \caption{Left: Data points drawn from the union of  two subspaces of
      dimension $2$ (through the origin) of $\R^3$.  Right: Data  points from the left figure slightly perturbed by zero mean Gaussian
     noise with $5\%$ standard deviation.}
    \label{fig2}
  \end{center}
\end{figure}

It can be easily checked that in the case of unperturbed data there is a unique non-degenerate minimizer of $\rho_A$, and it yields the exact subspaces. Thus,
we expect that for noisy
data the global minimizer still gives a good approximation. Since $\rho_A$ has
many local optima, for an arbitrary starting point our algorithms can
converge to stationary points which lead to a significant error
between the exact subspaces and their approximation.
Thus, in what follows, we briefly describe a method (PDA, see below)
for computing a suitable initial point which guarantees the
convergence of our algorithms towards a good approximation of
the exact subspaces in our numerical experiment:

\smallskip
The Polynomial Differential Algorithm (PDA) was proposed in
\cite{vmp}. It is a purely algebraic method for
recovering a finite number of subspaces from a
set of data points belonging to the union of these subspaces.
 From the data set finitely many homogeneous polynomials
are computed such that their zero set coincides with the union of the sought subspaces.
Then, an evaluation of their derivatives
 at given data points yields successively a basis of the orthogonal
complement of subspaces one is interested in.
For noisy data, a slightly modified version of PDA \cite{vmp}
yields an approximation of the unperturbed subspaces. This ``first"
approximation turned out to be a good starting point for our
iterative algorithms which significantly improved the
approximation quality.

\smallskip
For each noise level we perform 500 runs of the N-like and Local-CG algorithms
for different data sets and compute the mean error between the exact subspaces
and the computed approximations. As a preliminary step, we normalize all data points, such that no direction is favored.

In Fig.~\ref{fig2}, $400$ randomly chosen data points which
lie exactly in the union of two $2$-dimensional subspaces of $\R^3$
(left)  and their perturbed\footnote{Gaussian noise with $5\%$ standard
deviation} images (right)  are depicted. Moreover, the two plots display
the exact subspaces (left) as well as the ones computed
by our N-like algorithm (right). The error between the exact
subspaces and our approximation is ca.~$2^{\circ}$, whereas
the error for the PDA approximation is ca.~$5^{\circ}$.
\begin{figure}[htb]
  \begin{center}
    \resizebox{63mm}{!}{\includegraphics[width=6.3cm,height=5cm]{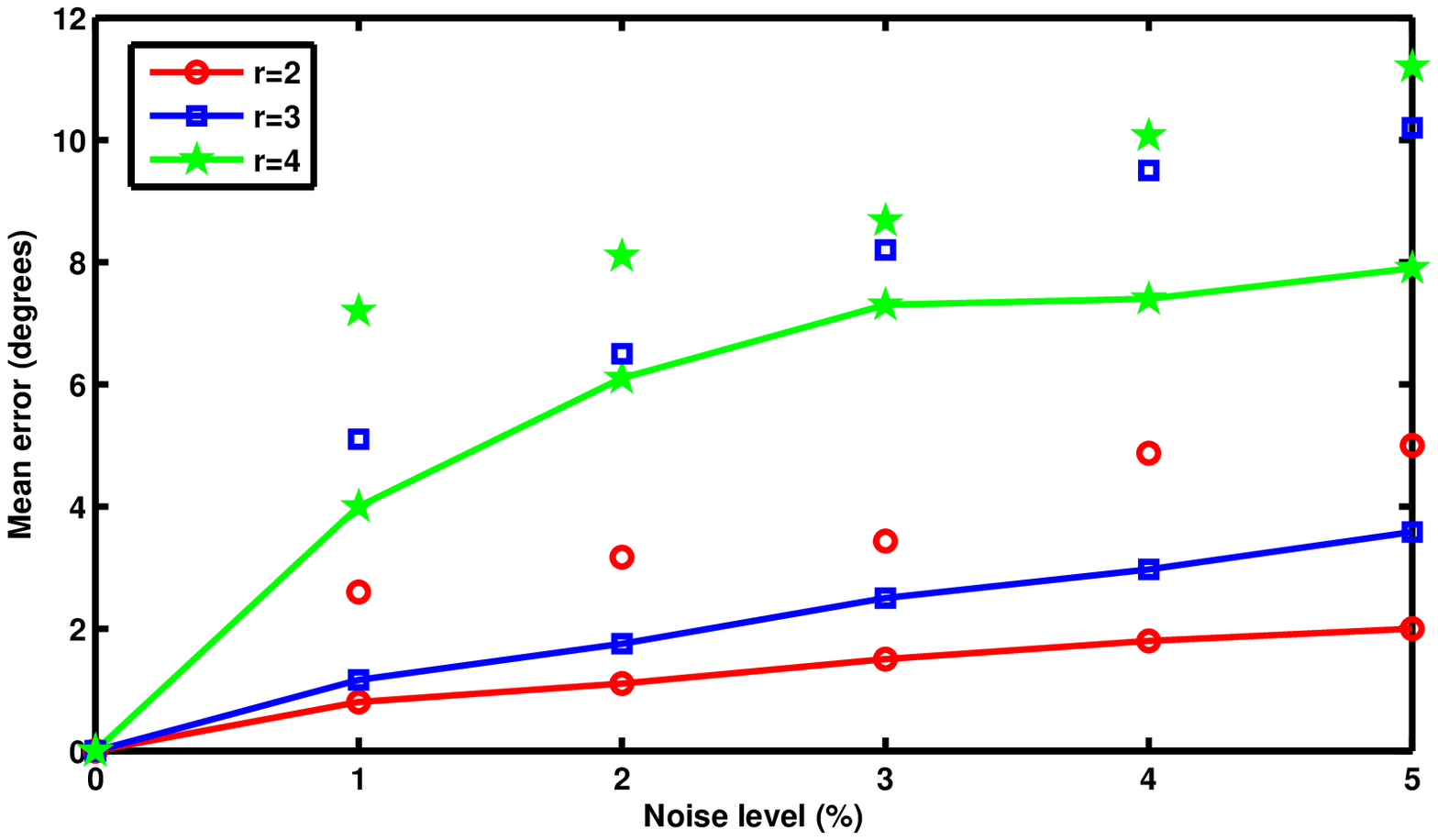}}
    \resizebox{63mm}{!}{\includegraphics[width=6.3cm,height=5cm]{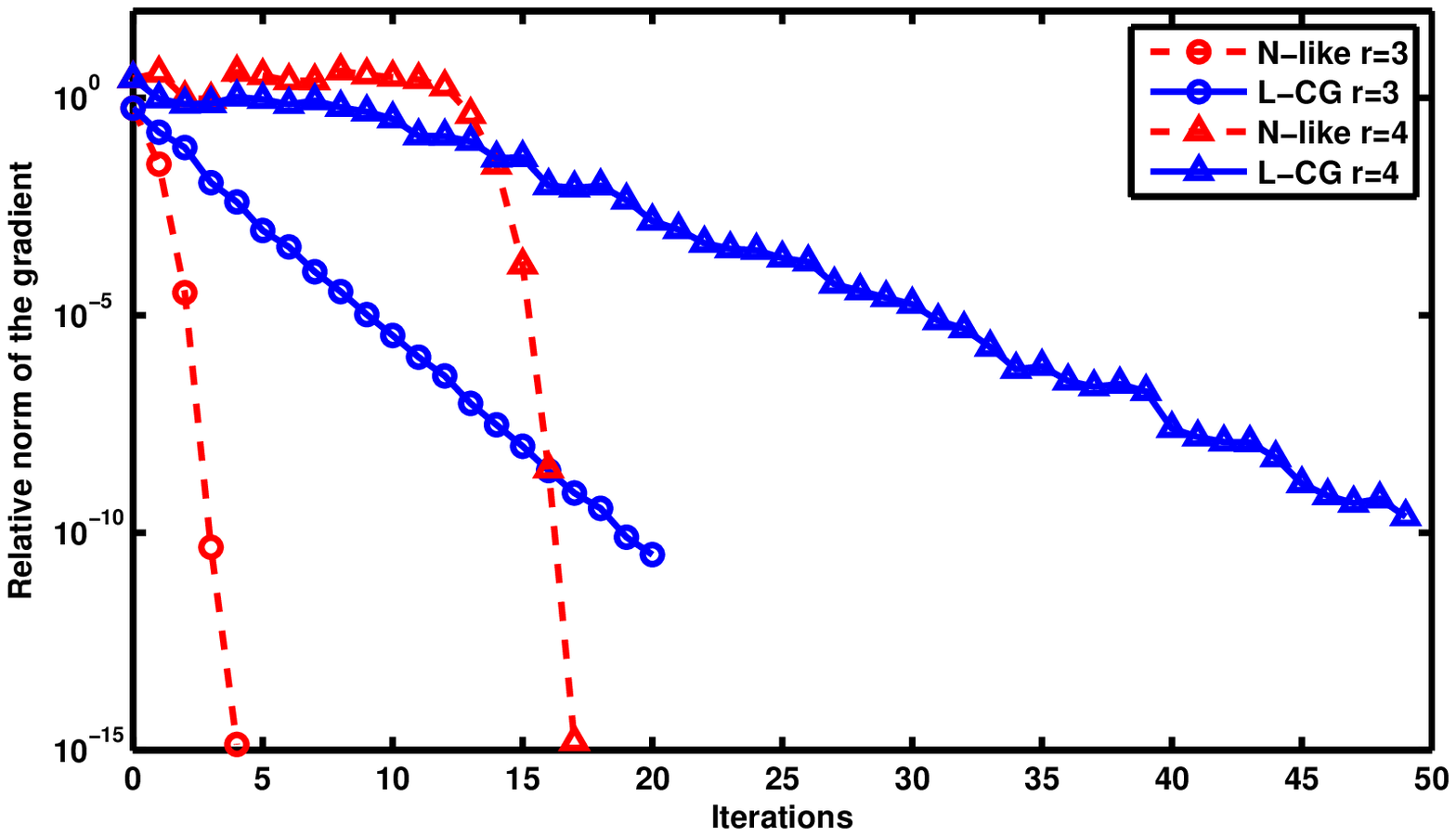}}
      \caption{Left: The mean error for noise
    levels from $0\%$ to $5\%$ and different number of subspaces. The
    disconnected symbols refer to the initial error (PDA) and the
    corresponding continuous lines refer to the error estimated by our algorithms. Right: Convergence of N-like and RCG for subspace clustering: number of
      iterations versus the relative norm of the gradient $\|\grad_{\rho_A}(P^n)\|/\rho_A(P^n)$ at a
      logarithmic scale. Data points from $3$ and resp. $4$ subspaces perturbed
    with $5\%$ Gaussian noise. Average CPU time:
ca.~$0.4$ and ca.~$2$ seconds for the N-like and RCG algorithm,
respectively (1.8 GHz Intel Core 2 Duo processor).}
    \label{fig3}
  \end{center}
\end{figure}

In Fig.~\ref{fig3}, we plot the mean error (left) for
different noise levels and different number of subspaces.
We have included also the mean error for the starting point
of our algorithms, i.e.~for the PDA approximation.
On the right we demonstrate the fast convergence
rate of the N-like and RCG algorithms for the case of $3$ and,
respectively, $4$ subspaces.

\textbf{Resume.} Our numerical experiments have proven that (i)
the minimization task proposed in Section 3 is capable to
solve subspace detection problems and (ii) our numerical algorithms
initialized with the PDA starting point yield an effective
method for computing a reliable approximation of the perturbed
subspaces. How the approximation of the perturbed subspaces varies when the noise in the data follows some law of distribution, is the subject of future investigation.

\section{Appendix}

\

Here we provide a proof of  Proposition $\ref{diffeo}$, which states that there exists a global Riemannian isometry $\varphi$ between $\mathrm{Gr}^{\otimes r}({\bf m},{\bf n})$ and $\mathrm{Gr}^{\times r}({\bf m},{\bf n}).$

\begin{proof}
The surjectivity of $\varphi$ is clear from the definition of
$\mathrm{Gr}^{\otimes r}({\bf m},{\bf n})$.
To prove the injectivity of $\varphi$ we use induction over $r$.
Choose  $(P_1,..,P_r),\; (Q_1,\dots,Q_r)\in
\mathrm{Gr}^{\times r}({\bf m},{\bf n})$ such that $P_1\otimes
\dots\otimes P_r=Q_1\otimes \dots\otimes Q_r,$ i.e.
\begin{equation}\alpha_{ij}P_r=\beta_{ij}Q_r
\quad\textrm{for all}\; i,\;j,\end{equation} where $\alpha_{ij}$ and
$\beta_{ij}$ are the entries of $P_1\otimes
\dots\otimes P_{r-1}$ and $Q_1\otimes \dots\otimes Q_{r-1}$, respectively.\\
Thus it exists $\gamma\in\C$ such that $P_r=\gamma Q_r$. Since $P_r$ and $Q_r$
have only $0$ and $1$ as eigenvalues it follows that $\gamma=1$ and $P_r=Q_r$.
Therefore, $P_1\otimes \dots\otimes P_r=Q_1\otimes \dots\otimes Q_r$ implies that
\begin{equation}P_1\otimes\dots\otimes P_{r-1}=Q_1\otimes \dots\otimes Q_{r-1}\end{equation} and the
procedure can be repeated until we obtain $P_j=Q_j$, for all $j=1,\dots,r$. Thus the injectivity of $\varphi$ is proven. So $\varphi$ is a continuous
bijective map with continuous  inverse $\varphi^{-1}$ due to the compactness
of $\mathrm{Gr}^{\times r}({\bf m},{\bf n})$. Moreover, the map $\varphi$ is smooth since the components of $P_1\otimes
\dots\otimes P_{r}$ are polynomial functions.
Let $P:=(P_1,\dots, P_r)$ and ${\bf P}:=P_1\otimes \dots\otimes P_r$.
Consider the tangent map of $\varphi$ at $P$, i.e.
\begin{equation}
\begin{array}{c}
D\varphi(P) : \mathrm{T}_P \mathrm{Gr}^{\times r}({\bf m},{\bf n})\rightarrow \mathrm{T}_{\bf P} \mathrm{Gr}^{\otimes r}({\bf m},{\bf n}),\\[2mm]
(\xi_1,\dots, \xi_r)\mapsto \displaystyle\sum\limits_{j=1}^r P_1\otimes\dots\otimes \xi_j\otimes \dots\otimes P_r.
\end{array}
\end{equation}
With the inner products (\ref{inner1})  and (\ref{innerprod3}) defined  on $\mathfrak{her}_N$ and $\mathfrak{her}_{n_1}\times\cdots\times\mathfrak{her}_{n_r}$, respectively, one has
 \begin{equation}\label{varphi}
\left<D\varphi(P)\xi, D\varphi(P)\eta\right>
= \displaystyle\sum\limits_{j=1}^r
M_j\tr(\xi_j\eta_j)= \langle \xi, \eta\rangle,
\quad
M_j:=\prod\limits_{k=1,\, k\neq j}^rm_k.
\end{equation}  This
implies that the tangent map $D\varphi(P)$ is a linear isometry. Thus,
it is
invertible and therefore $\varphi$ is a local
diffeomorphism. Moreover, since $\varphi$ is bijective it is a global
diffeomorphism, giving thus a global Riemannian isometry
when the metric on $\mathrm{Gr}^{\times r}({\bf m},{\bf n})$
is defined by \eqref{innerprod4}.
\end{proof}

\

\section*{ Acknowledgements}
This work has been supported by the Federal Ministry of Education and
Research (BMBF) through the project  FHprofUnd 2007: Cooperation Program
between Universities of Applied Science and Industry,
"Development and Implementation of Novel Mathematical Algorithms
for Identification and Control of Technical Systems".

\end{document}